\newtheorem{theorem}{Theorem}[section]
\newtheorem{lemma}[theorem]{Lemma}
\newtheorem{proposition}[theorem]{Proposition}
\newtheorem{corollary}[theorem]{Corollary}
\newtheorem{definition}[theorem]{Definition}
\newtheorem{example}[theorem]{Example}
\newtheorem{remark}[theorem]{Remark}
\newcommand{\C}{\mbb{C}}
\newcommand{\I}{\mc{I}}
\newcommand{\N}{\mbb{N}}
\newcommand{\R}{\mbb{R}}
\newcommand{\Q}{\mc{Q}}
\newcommand{\OO}{\Omega}
\newcommand{\mbb}{\mathbb}
\newcommand{\mc}{\mathcal}
\newcommand{\mk}{\mathfrak}
\newcommand{\mr}{\mathrm}
\newcommand{\mscr}{\mathscr}
\newcommand{\lra}{\longrightarrow}
\newcommand{\pr}{\prime}
\newcommand{\ui}{i}
\def\SS{\mathbb S}
\newcommand\Ac{A_{\C}}
\newcommand\Sl{\mathcal S}
\def\dd#1#2{\dfrac{\partial#1}{\partial#2}}
\newcommand\im{\operatorname{Im}}
\newcommand\re{\operatorname{Re}}
\newcommand{\cS}{\mbb{S}}
\newcommand{\q}{\mbb{H}}
\newcommand{\oc}{\mbb{O}}
\newcommand{\sto}{\mr{u}}
\newcommand{\Sto}{\mr{U}}
\newcommand{\punto}{\boldsymbol{\cdot}}
\newcommand{\cl}{\mr{clos}}
\newcommand{\stx}{\mscr{S}}
\begin{document}

\title[Power and spherical series over real alternative $^{\ast}$-algebras]{Power and spherical series\\ over real alternative $^{\boldsymbol{\ast}}$-algebras} 

\author{Riccardo Ghiloni}
\email{ghiloni@science.unitn.it}

\author{Alessandro Perotti}
\email{perotti@science.unitn.it}
\thanks{Work partially supported by GNSAGA of INdAM, MIUR-PRIN project ``Variet\`a reali e complesse: geometria, topologia e analisi armonica" and MIUR-FIRB project ``Geometria Differenziale e Teoria Geometrica delle Funzioni"}
\address{Department of Mathematics, University of Trento, I--38123, Povo-Trento, Italy}

\subjclass[2000]{Primary 30G35; Secondary 30B10,  30G30, 32A30}
\keywords{Power series; slice regular functions; quaternions; Clifford algebras; alternative algebras}

\maketitle


\begin{abstract}
We study two types of series over  a real alternative $^*$-algebra $A$. The first type are series of the form $\sum_{n} (x-y)^{\punto n}a_n$, where $a_n$ and $y$ belong to $A$ and $(x-y)^{\punto n}$ denotes the $n$--th power of $x-y$ w.r.t.\ the usual product obtained by requiring commutativity of the indeterminate $x$ with the elements of $A$.  
In the real and in the complex cases, the sums of power series define, respectively, the real analytic and the holomorphic functions. 
In the quaternionic case, a series of this type produces, in the interior of its set of convergence, a function belonging to the recently introduced class of  slice regular functions. We show that also in the general setting of an alternative algebra $A$, the sum of a power series is a slice regular function. We consider also a second type of series, the spherical series, where the powers are replaced by a different sequence of slice regular polynomials. It is known that on the quaternions,  the set of convergence of these series is an open set, a property not always valid in the case of power series.  We characterize the sets of convergence of this type of series for an arbitrary alternative $^*$-algebra $A$. In particular, we prove that these sets are always open in the quadratic cone of $A$. Moreover, we show that every slice regular function has a spherical series expansion at every point.
\end{abstract}


\section{Introduction}

In a non--commutative setting, the ring of polynomials is usually defined by fixing the position of the coefficients w.r.t.\  the indeterminate $x$ (for example on the right) and by imposing commutativity of $x$ with the coefficients when two polynomials are multiplied together. In this way, one recovers some relevant results valid in the commutative case, and new phenomena appear (see e.g.~\cite[Sect.~16]{Lam}). We are interested in polynomials over  a real alternative algebra $A$, of the form $p(x)=\sum_{n=0}^d x^na_n$, with $a_n\in A$, or, more generally, of the type
\[
p(x)=\sum_{n=0}^d (x-y)^{\punto n}a_n
\]
where $a_n\in A$, $y\in A$ and $(x-y)^{\punto n}$ denotes the $n$--th power of $x-y$ w.r.t.\ the product described above, with the indeterminate $x$ commuting with the elements of $A$. The natural generalization of polynomials are power series of the form
\begin{equation}\label{series}
f(x)=\sum_{n=0}^{+\infty} (x-y)^{\punto n}a_n.
\end{equation}
When $A$ is $\R$ or $\C$, power series produce respectively real analytic or holomorphic functions on their disk of convergence. Then two questions immediately arise: 
\medskip

\noindent
\textsc{Question:} \emph{Which class of functions is obtained from power series of type \eqref{series}? And where these series converge?}
\medskip

In the quaternionic case, the answer to these questions was given in \cite{GeSto2012MathAnn}: the sum of a series of type \eqref{series}, in the interior of its set of convergence, is a \emph{slice regular} function. The theory of slice regularity on the quaternionic space was introduced by Gentili and Struppa in \cite{GeSt2006CR,GeSt2007Adv,GeStoSt2013}, and then it was generalized to Clifford algebras and alternative $^*$-algebras in \cite{CoSaSt2009Israel,  GhPe_Trends,GhPe_AIM}.

One critical aspect about quaternionic power series is that the set of convergence can have an empty interior. As proved in \cite{GeSto2012MathAnn}, if $y\notin\R$, this set may reduce to a disk centered at $y$ contained in the complex ``slice'' of the quaternionic space $\q$ spanned by the reals and by  $y$. To avoid this difficulty, a new series expansion, called spherical series, was introduced in \cite{StoppatoAdvMath2012}, where the powers $(x-y)^{\punto n}$ were replaced by another family of slice regular polynomials of a quaternionic variable. The set of convergence of these series, is always an open subset of $\q$ and, moreover, every slice regular function has a series expansion of this type near every point of its domain of definition.

In this paper we are able to answer to the aforementioned questions (also for spherical series) when $A$ is a real alternative $^*$-algebra, the more general setting where,  using the approach of \cite{GhPe_Trends,GhPe_AIM}, the concept of slice regularity can be defined.

\textit{Fix a real alternative algebra $A$ with unity $1$ of finite dimension, equipped with a real linear anti--involution $\mk{a}:A \lra A$}. We can then consider $A$ as a real $^*$-algebra. For simplicity, given any element $x$ of $A$, we will use the symbol $x^c$ to denote $\mk{a}(x)$. 
Identify $\R$ with the subalgebra of $A$ generated by $1$.  

For each element $x$ of $A$, the \emph{trace} of $x$ is $t(x):=x+x^c\in A$ and the (squared) \emph{norm} of $x$ is
$n(x):=xx^c\in A$. 

Let $d:=\dim_{\R}A$. Choose a real vector base $\mc{V}=(v_1,\ldots,v_d)$ of $A$ and define the norm $\| \cdot \|_{\mc{V}}:A \lra \R^+:=\{x\in\R\,|\, x\ge0\}$ by setting
\begin{equation} \label{eq:norm}
\textstyle
\|x\|_{\mc{V}}:=\left(\sum_{k=1}^dx_k^2\right)^{1/2},
\end{equation}
where $x_1,\ldots,x_d$ are the real coordinates of $x$ w.r.t.\ $\mc{V}$; that is, $x=\sum_{k=1}^dx_kv_k$. Evidently, the topology on $A$ induced by $\| \cdot \|_{\mc{V}}$ does not depend on the chosen base $\mc{V}$. We call such a topology on $A$ the \textit{euclidean topology on $A$}.


\emph{We assume that $A$ is equipped with a norm $\| \cdot \|_A$ satisfying the property: $\|x\|_A=\sqrt{n(x)}$ for each $x \in \Q_A$.} Since the real dimension of $A$ is finite, the topology induced by $\| \cdot \|_A$ on $A$ coincides with the euclidean one. 

We recall some definitions from  \cite{GhPe_Trends} and \cite{GhPe_AIM}.

\begin{definition}\label{cone}
The \emph{quadratic cone} of  $A$ is the set
\[\Q_A:=\R\cup\{x\in A\ |\ t(x)\in\R,\ n(x)\in\R,\ 4n(x)> t(x)^2\}.\]
We also set\,
$\SS_A:=\{J\in \Q_A\ |\ J^2=-1\}$. Elements of\, $\SS_A$ are called \emph{square roots of $-1$} in the algebra $A$. For each $J\in \SS_A$, we will denote by $\C_J:=\langle 1,J\rangle\simeq\C$ the subalgebra of $A$ generated by  $J$. 
\end{definition}

The quadratic cone is a real cone invariant w.r.t.~translations along the real axis. Moreover, it has two fundamental properties  (cf.\ \cite[Propositions 1 and~3]{GhPe_AIM}):
\begin{itemize}
	\item
	$\Q_A$ coincides with the algebra $A$ if and only if  $A$ is isomorphic to one of the division algebras $\C,\q$ or $\oc$ with the usual conjugation mapping as anti-involution.
	\item
	$\Q_A=\bigcup_{J\in \SS_A}\C_J$ and $\C_I\cap\C_J=\R$  for every $I,J\in\SS_A$, $I\ne\pm J$. In particular, every $x\in\Q_A$ can be written in a unique way as $x=\re(x)+\im(x)$, where $\re(x):=(x+x^c)/2\in\R$ and $\im(x):=(x-x^c)/2$.
Moreover, every nonzero $x\in\Q_A$ has a multiplicative inverse $x^{-1}=n(x)^{-1}x^c\in\Q_A$. 
\end{itemize}

Fix a non--empty open subset $D$ of $\C$, invariant under the complex conjugation $z=\alpha+\ui \beta \longmapsto \overline{z}=\alpha-\ui \beta$. Let $\OO_D$ be the  subset of $\Q_A$ defined by:
\[
\OO_D:=\{x \in \Q_A \,|\, x=\alpha+\beta J, \ \alpha,\beta \in \R, \ \alpha+i\beta \in D, \ J \in \SS_A\}\:.
\]
A set of the form $\OO_D$ will be called \emph{circular}.
For simplicity, we assume $\OO_D$ connected. This is equivalent to require either that $D$ is connected if $D \cap \R \neq \emptyset$, or that $D$ consists of two connected components interchanged by the complex conjugation if $D \cap \R \neq \emptyset$. 

Let $\Ac=A\otimes_{\R}\C$ be the complexification of $A$. We will use the representation $\Ac=\{w=x+i y \ | \ x,y\in A\}$, with  $i^2=-1$ and complex conjugation $\overline w=\overline{x+iy}=x-iy$.
A function $F:D \lra \Ac$ is called a \textit{stem function} on $D$ if it satisfies the condition $F(\overline z)=\overline{F(z)}$ for each $z \in D$. If $F_1,F_2:D \lra A$ are the $A$-valued components of $F=F_1+iF_2$, then such a condition is equivalent to require that $F_1(\overline z)=F_1(z)$ and $F_2(\overline z)=-F_2(z)$ for each $z \in D$. We call $F$ \textit{continuous} if $F_1$ and $F_2$ are continuous. We say that $F$ is \textit{of class $\mscr{C}^1$} if $F_1$ and $F_2$ are of class $\mscr{C}^1$.

\begin{definition}
Any stem function $F=F_1+iF_2:D \lra \Ac$ induces a \emph{$($left$)$ slice function} $f=\I(F):\OO_D \lra A$ as follows: if $x=\alpha+\beta J\in \OO_D\cap\C_J$ for some $\alpha,\beta \in \R$ and $J \in \SS_A$, we set  
\[
f(x):=F_1(z)+JF_2(z) \quad (z=\alpha+i\beta).
\]
\end{definition}

We will denote by $\Sl^0(\OO_D,A)$ the real vector space of (left) slice functions on $\OO_D$ induced by continuous stem functions and  by $\Sl^1(\OO_D,A)$ 
the real vector space of slice functions induced by stem functions of class $\mscr{C}^1$.

Let  $f=\I(F) \in \mc{S}^1(\OO_D,A)$. Let us denote by $\partial F/\partial \overline{z}:D \lra \Ac$ the stem function on $D$ defined by
\[
\dd{F}{\overline{z}}:=\frac{1}{2}\left(\dd{F}{\alpha}+\ui \dd{F}{\beta} \right).
\]
It induces the \emph{slice derivatives} $\dd{f}{x}:=\I\left(\dd{F}{{z}}\right)$ and $\dd{f}{x^c}:=\I\left(\dd{F}{\overline{z}}\right)$ in $\mc{S}^0(\OO_D,A)$.

\begin{definition}
A slice function $f\in\mc{S}^1(\OO_D,A)$ is called \emph{slice regular} if it holds:
\[
\dd{f}{x^c}=0 \text{\quad on $\OO_D$} 
\]
i.e.\ if $f$ is induced by a holomorphic stem function $F:D\rightarrow A_\C$.
We denote by $\mc{SR}(\OO_D,A)$ the real vector space of all slice regular functions on~$\OO_D$.
\end{definition}

In general, the pointwise product of two slice functions is not a slice function. However, 
the pointwise product in the algebra $A\otimes\C$ induces a natural product on slice functions.
 
\begin{definition}
Let $f=\I(F),g=\I(G)$ be slice functions on $\OO_D$. The \emph{slice product} of $f$ and $g$ is the slice function on $\OO_D$
\[f\punto g:=\I(FG).\]
\end{definition}
If $f,g$ are slice regular, then also $f\punto g$ is slice regular. In general, $(f\punto g)(x)\ne f(x)g(x)$.
If the components $F_1,F_2$ of the \emph{first} stem function $F$ are real--valued,  or if $F$ and $G$ are both $A$--valued, then $(f\punto g)(x)= f(x)g(x)$ for every $x\in \OO_D$. In this case, we will use also the notation $fg$ in place of $f\punto g$.

Given $y \in\Q_A$ and $n \in \N$, we denote by $(x-y)^{\punto n}$ the $n$-th power of the slice function $x-y$ w.r.t.\ the slice product. As an immediate consequence of the definitions, polynomials  of the form $\sum_n^d(x-y)^{\punto n}a_n$, with coefficients $a_n$ in $A$, define slice regular functions on $\Q_A$.
More generally, we are interested in power series $\sum_n(x-y)^{\punto n}a_n$ with right coefficients in $A$. When $y$ is real, then the slice power $(x-y)^{\punto n}$ coincides with the usual power $(x-y)^n$ and the series converges on the intersection of an euclidean ball $B(y,R)=\{x\in A\;|\;\|x\|_A<R\}$ with the quadratic cone. But when $y\in\Q_A\setminus\R$, the convergence of the series reveals unexpected phenomena. As already seen in the quaternionic case \cite{GeSto2012MathAnn}, the sets of convergence of series $\sum_n(x-y)^{\punto n}a_n$ may have empty interior w.r.t.\ the euclidean topology of $\Q_A$. 
To express more precisely this aspect, we introduce a metric on $\Q_A$, using the same approach of \cite{GeSto2012MathAnn}. 

Let $x,y \in \Q_A$ with $y=\xi+J\eta$ for some $\xi,\eta \in \R$ and $J \in \cS_A$. Define
\begin{equation*}
\sigma_A(x,y):=
\begin{cases}
 \|x-y\|_A &\text{\quad if }x\in \C_J\\
\sqrt{{|\re(x)-\re(y)|}^2+{\left(\|\im(x)\|_A+\|\im(y)\|_A\right)}^2} &\text{\quad if }x\notin \C_J.
\end{cases}
\end{equation*}
The topology of $\Q_A$ induced by $\sigma_A$ is finer than the euclidean one. If $\cS_A$ has no isolated points, a $\sigma_A$--ball  of radius $r$ centered at $y$ has empty interior if $r\le|\im(y)|$. In Sect.~\ref{Abel_Theorem_for_power_and_spherical_series} we show that the sets of convergence of series $\sum_n(x-y)^{\punto n}a_n$ are $\sigma_A$--balls centered at $y$, and obtain a version of the Abel Theorem for these series. We also give formulas for the coefficients of the expansion.
We then show in Sect.~\ref{Power_expansion_for_slice_regular_functions} that when a function is defined on an open subset of the quadratic cone, then its analyticity w.r.t.\ the metric $\sigma_A$ is equivalent to slice regularity.

\begin{definition}
Given a function $f:U \lra A$ defined on a non--empty open subset $U$ of $\Q_A$, we say that $f$ is \emph{$\sigma_A$--analytic} or \emph{power analytic},  if, for each $y \in U$, there exists a non--empty $\sigma_A$--ball $\Sigma$ centered at $y$ and contained in $U$, and a series $\sum_{n \in \N}(x-y)^{\punto n}a_n$ with coefficients in $A$, which converges to $f(x)$ for each $x \in \Sigma \cap U$. 
\end{definition}

The main result of Sect.~\ref{Power_expansion_for_slice_regular_functions} is the following.

\begin{theorem}
Let $\OO_D$ be connected and let $f:\OO_D \lra A$ be any function. The following assertions hold.
\begin{itemize}
 \item[$(\mr{i})$] If $D \cap \R=\emptyset$, then $f$ is a slice regular function if and only if $f$ is a $\sigma_A$--analytic slice function.
 \item[$(\mr{ii})$] If $D \cap \R \neq \emptyset$, then $f$ is a slice regular function if and only if $f$ is $\sigma_A$--analytic.
\end{itemize}
\end{theorem}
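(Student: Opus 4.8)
The plan is to prove both implications by moving between the slice function $f=\I(F)$ and its stem function $F=F_1+iF_2$, using the results of Section~\ref{Abel_Theorem_for_power_and_spherical_series} to pass between slice power series and holomorphic stem functions, together with the elementary fact (immediate from the definition of $\I$) that on a slice $\C_J$ one has $f(\alpha+J\beta)=F_1(z)+JF_2(z)$, where $z=\alpha+i\beta$. Throughout I write $\phi(\alpha,\beta):=f(\alpha+J\beta)$ and note that for $J\in\SS_A$ the left multiplication $L_J$ is a complex structure ($J^2=-1$, $J$ invertible).

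For the implication ``slice regular $\Rightarrow$ $\sigma_A$--analytic'' (uniform in (i) and (ii), since a slice regular function is in particular a slice function), fix $y=\xi+J\eta\in\OO_D$ with $\eta\ge 0$ and set $z_0=\xi+i\eta$. Because $\ds{f}{x^c}=0$, the restriction of $f$ to $\C_J$ is holomorphic for the complex structure $L_J$, hence equals the sum of an ordinary Taylor series $\sum_n(x-y)^n a_n$ on a disk $\C_J\cap B(y,\rho)$, with uniquely determined $a_n\in A$. By the Abel Theorem the slice power series $g:=\sum_n(x-y)^{\punto n}a_n$ converges on a $\sigma_A$--ball $\Sigma$ centered at $y$ to a slice regular function. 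Now $f=\I(F)$ and $g=\I(G)$ with $F,G$ holomorphic, and $f=g$ on a disk of $\C_J$ around $y$ (both are the sum of the same complex Taylor series there); writing this as $F_1+JF_2=G_1+JG_2$ near $z_0$, the identity principle for holomorphic functions propagates the relation, and evaluating also at $\bar z$ and using $F_i(\bar z)=\pm F_i(z)$ gives $F=G$. Thus $f=g$ on $\OO_D\supseteq\Sigma$, and $f$ is a $\sigma_A$--analytic slice function.

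For the converse the analytic heart is that $A_\C$--valued holomorphy of $F$ is equivalent to a \emph{two--sided} slice holomorphy. For $\beta>0$ the definition of $\I$ gives $2F_1(z)=\phi(\alpha,\beta)+\phi(\alpha,-\beta)$ and $2JF_2(z)=\phi(\alpha,\beta)-\phi(\alpha,-\beta)$. If $f$ is $\sigma_A$--analytic, the expansion at $y$ makes $\phi$ holomorphic for $L_J$ near $(\xi,\eta)$, i.e. $\partial_\beta\phi=J\partial_\alpha\phi$, and the expansion at $\bar y=\xi-J\eta\in\OO_D$ makes $(\alpha,\beta)\mapsto\phi(\alpha,-\beta)$ $L_J$--antiholomorphic near $(\xi,\eta)$, i.e. $\partial_\beta[\phi(\cdot,-\cdot)]=-J\partial_\alpha[\phi(\cdot,-\cdot)]$. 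Substituting these into the two displayed identities and cancelling the invertible factor $J$ yields exactly the Cauchy--Riemann equations $\partial_\alpha F_1=\partial_\beta F_2$ and $\partial_\beta F_1=-\partial_\alpha F_2$; moreover the local expansions make $F$ of class $\mscr{C}^1$ (indeed real analytic). Hence $F$ is holomorphic near $z_0$ and $f$ is slice regular near $y$. In case (i), where $f$ is assumed to be a slice function and $\eta\ne 0$ at every point, this already completes the proof.

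In case (ii) one must first promote $f$ to a slice function. Near a real point $y_0\in\OO_D\cap\R$ the $\sigma_A$--ball coincides with a euclidean ball, so $f=\sum_n(x-y_0)^n a_n$ on an open circular set $\OO_{\Delta_0}$, and this sum is manifestly a slice regular slice function there. The main obstacle is to propagate this to the whole connected set $\OO_D$, precisely because at non--real points the relevant $\sigma_A$--balls may have empty euclidean interior and thus only control $f$ along a single slice. I would carry this out by proving that the set of $y\in\OO_D$ admitting an open circular neighborhood on which $f$ agrees with a slice regular function is open (clear from the local expansions and the Abel Theorem) and closed in $\OO_D$, the closedness being obtained by gluing the local holomorphic stem functions via the identity principle and using the continuity supplied by the convergent expansions. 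Once $f$ is known to be a slice function on $\OO_D$, the computation of the preceding paragraph applies verbatim and gives slice regularity. I expect this gluing/closedness step to be the most delicate part of the argument.
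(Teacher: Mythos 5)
Your forward implication and your proof that a $\sigma_A$--analytic \emph{slice} function is slice regular (which settles case (i) and the regularity half of case (ii)) are essentially correct and match the paper in substance: the paper gets the forward direction by quoting Theorem~\ref{thm:power-expansion}, and for the converse it deduces real analyticity of the stem function from the representation formula and the smoothness result of \cite{GhPe_AIM}, and then obtains $\partial f/\partial x^c=0$ pointwise from Lemma~\ref{lem:cullen} --- which is exactly the content of your hand computation of the Cauchy--Riemann equations for $F_1=\tfrac12\big(\phi(\cdot,\cdot)+\phi(\cdot,-\cdot)\big)$ and $JF_2=\tfrac12\big(\phi(\cdot,\cdot)-\phi(\cdot,-\cdot)\big)$.

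The gap is in case (ii), precisely at the step you yourself flag as delicate: the closedness half of your open--closed argument is not proved, and as described it does not go through. Let $W$ be your set and let $y=\xi+J\eta$ be a non--real point of $\OO_D$ in the closure of $W$. The expansion of $f$ at $y$ controls $f$ only on the disk $B_J(y,r)$ of the single slice $\C_J$ (the $\sigma_A$--ball may have empty euclidean interior), so to recover $f$ on a full circular neighborhood of $y$ you must invoke the expansions at all points $y_I=\xi+I\eta$ of the sphere of $y$; but $\sigma_A$--analyticity supplies a radius $r_I>0$ for each $I\in\cS_A$ separately, with no uniform lower bound as $I$ ranges over the (in general infinite) set $\cS_A$. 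Even after gluing stem functions you only conclude that $f$ agrees with a slice regular function on $\bigcup_I B_I(y_I,\rho_I)$ with possibly $\inf_I\rho_I=0$, which need not contain an open circular neighborhood of $y$; so $y\in W$ does not follow and closedness fails as stated. The paper sidesteps this uniformity problem entirely: it fixes $I,J\in\cS_A$, observes that $\sigma_A$--analyticity alone (no sliceness needed) already makes each restriction $f_I(z)=f(\Phi_I(z))$ real analytic on all of $D$, and forms the representation--formula defect
\[
\tilde f_I(z):=f_I(z)-\tfrac12\big(f_J(z)+f_J(\overline z)\big)+\tfrac I2\big(J(f_J(z)-f_J(\overline z))\big),
\]
which is real analytic on the connected set $D$ and vanishes on the open set coming from the euclidean ball around a real point, hence vanishes identically by the identity principle; Lemma~3.2 of \cite{Gh_Pe_GlobDiff} then yields that $f$ is a slice function. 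You would need to replace your open--closed scheme by this (or an equivalent) global identity--principle argument carried out on the two--dimensional domain $D$ rather than on $\OO_D$.
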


Sect.~\ref{Power_expansion_for_slice_regular_functions} contains also estimates for the coefficients of the power expansion of a slice regular function and the expression of the remainder in integral form. These results are new also in the quaternionic case.

Instead of using slice powers $(x-y)^{\punto n}$, other classes of functions can be considered for series expansions on the quadratic cone. A natural choice is given by the powers of the characteristic polynomial $\Delta_y(x):=(x-y)\cdot(x-y^c)$ of an element $y$ of $\Q_A$. More precisely, for each $m\in\N$ we define, following \cite{StoppatoAdvMath2012}, the slice regular polynomial functions 
\[
\stx_{y,2m}(x):=\Delta_y(x)^m,
\quad
\stx_{y,2m+1}(x):=\Delta_y(x)^m(x-y).
\]
Differently from (slice) power series,  series of type  $\sum_{n \in \N}\stx_{y,n}(x)s_n$  have convergence sets that are always open w.r.t.\ the euclidean topology. More precisely, these sets are related to a pseudo--metric defined on the quadratic cone, called \emph{Cassini pseudo--metric}.
If $x$ and $y$ are points of $\Q_A$, then we set, in analogy with \cite{StoppatoAdvMath2012},
\[
\sto_A(x,y):=\sqrt{\|\Delta_y(x)\|_A}.
\]
The function $\sto_A$ turns out to be a pseudo--metric on $\Q_A$, whose induced topology is strictly coarser than the euclidean one. 
In Sect.~\ref{Abel_Theorem_for_power_and_spherical_series} we show that the sets of convergence of series $\sum_{n \in \N}\stx_{y,n}(x)s_n$ are $\sto_A$--balls centered at $y$. We also give the corresponding Abel Theorem and formulas for computing the coefficients of the expansion, which are new also in the quaternionic case.

\begin{definition}
Given a function $f:V \lra A$ from a non--empty circular open subset $V$ of $\Q_A$ into $A$, we say that $f$ is \emph{$\sto_A$--analytic} or \emph{spherically analytic}, if, for each $y \in V$, there exists a non--empty $\sto_A$--ball $\Sto$ centered at $y$ and contained in $V$, and a series $\sum_{n \in \N}\stx_{y,n}(x)s_n$ with coefficients in $A$, which converges to $f(x)$ for each $x \in \Sto \cap V$.
\end{definition}
In the quaternionic case \cite{StoppatoAdvMath2012}, spherically analytic functions were called \emph{symmetrically analytic}.
We finally show in Sect.~\ref{Spherical_expansion_for_slice_regular_functions} that a function is spherically analytic on a circular domain if, and only if, it is slice regular therein.

\begin{theorem}\label{spherical_intro}
Let $\OO_D$ be connected and let $f:\OO_D \lra A$ be any function. The following assertions hold.
\begin{itemize}
 \item[$(\mr{i})$] If $D \cap \R=\emptyset$, then $f$ is a slice regular function if and only if $f$ is slice and spherically analytic.
 \item[$(\mr{ii})$] If $D \cap \R \neq \emptyset$, then $f$ is a slice regular function if and only if $f$ is spherically analytic.
\end{itemize}
\end{theorem}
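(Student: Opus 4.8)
The plan is to push the whole statement down to the level of holomorphic stem functions, where the spherical system becomes completely explicit. Write $y=\xi+J_0\eta$ with $\xi,\eta\in\R$, $J_0\in\SS_A$, and set $\z_0:=\xi+\ui\eta\in\C$. A direct computation shows that $\Delta_y(x)=(x-y)\punto(x-y^c)$ is induced by the \emph{real} polynomial stem function $z\mapsto P(z):=(z-\z_0)(z-\overline{\z_0})=z^2-t(y)z+n(y)$; hence $\Delta_y$ is intrinsic and, for $x=\alpha+\beta J$ with $z=\alpha+\ui\beta$, one has $\sto_A(x,y)=\sqrt{|P(z)|}$, so the $\sto_A$-ball of radius $r$ is the Cassini region $\{z:|P(z)|<r^2\}$. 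Consequently $\stx_{y,2m}=\Delta_y^{\,m}$ and $\stx_{y,2m+1}=\Delta_y^{\,m}\punto(x-y)$ are induced by $P(z)^m$ and $P(z)^m(z-y)$, and a spherical series $\sum_n\stx_{y,n}(x)s_n$ corresponds to the stem-function series $\sum_m P(z)^m s_{2m}+\sum_m P(z)^m(z-y)s_{2m+1}$. Thus, for $f=\I(F)$, spherical analyticity near $y$ is \emph{equivalent} to an expansion of the $\Ac$-valued function $F$ on $\{|P(z)|<r^2\}$ in the system $\{P(z)^m,\,P(z)^m(z-y)\}_{m\in\N}$ with coefficients in $A$.

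I would treat the implication ``spherically analytic $\Rightarrow$ slice regular'' first. The delicate preliminary point is that $f$ be slice, so that a global stem function $F$ with $f=\I(F)$ exists. In case $(\mr i)$ this is assumed. In case $(\mr{ii})$ I would derive it from a real point: if $y\in\OO_D\cap\R$ then $\z_0=\overline{\z_0}=y$, $P(z)=(z-y)^2$ and $\stx_{y,n}(x)=(x-y)^{\punto n}$, so the spherical expansion at $y$ is an ordinary power series, which defines a slice regular (hence slice) function on a Euclidean ball around $y$; sliceness then propagates to the connected $\OO_D$ by the identity principle for slice functions. This is precisely why the dichotomy $(\mr i)/(\mr{ii})$ appears: when $D\cap\R=\emptyset$ there is no real point from which to bootstrap sliceness. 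Once $f=\I(F)$ is in hand, on each $\sto_A$-ball $\Sto\subseteq\OO_D$ the convergent spherical series rewrites as the stem series above, and by the Abel Theorem of Section~\ref{Abel_Theorem_for_power_and_spherical_series} it converges locally uniformly on the interior of $\Sto$; there $F$ is a locally uniform limit of holomorphic functions, hence holomorphic, so $\partial F/\partial\overline z=0$ and $f$ is slice regular. As such balls cover $\OO_D$, $f$ is slice regular throughout.

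For the converse, let $f=\I(F)$ be slice regular and fix $y$. The coefficients arise from a \emph{spherical Taylor algorithm}. Since $F(\overline z)=\overline{F(z)}$ and $\z_0-\overline{\z_0}=2\ui\eta$ is an invertible complex scalar, the two linear conditions $F(\z_0)=s_0+(\z_0-y)s_1$ and $F(\overline{\z_0})=s_0+(\overline{\z_0}-y)s_1$ have a unique solution; writing $F(\z_0)=a+\ui b$ with $a,b\in A$ gives $s_1=b/\eta$ and $s_0=a+J_0 b$, both in $A$. By construction the numerator $F(z)-s_0-(z-y)s_1$ vanishes at the two simple zeros $\z_0,\overline{\z_0}$ of $P$, so $F^{(1)}(z):=\bigl(F(z)-s_0-(z-y)s_1\bigr)/P(z)$ is again a holomorphic stem function on $D$; iterating produces $s_{2k},s_{2k+1}$ together with holomorphic stem functions $F^{(k)}$, and this is exactly the inverse of the coefficient formulas of Section~\ref{Abel_Theorem_for_power_and_spherical_series}. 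When $y\in\R$ the double zero makes the algorithm degenerate into ordinary division by $z-y$, recovering the classical Taylor coefficients of $\sum_n(x-y)^{\punto n}s_n$.

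The main obstacle is convergence on the \emph{full} $\sto_A$-ball. The algorithm yields the exact identity
\[
F(z)=\sum_{k=0}^{N-1}\bigl(P(z)^k s_{2k}+P(z)^k(z-y)s_{2k+1}\bigr)+P(z)^N F^{(N)}(z),
\]
so it suffices to show $P(z)^N F^{(N)}(z)\to0$ on every Cassini region whose closure lies in $D$. The partial sum $\sum_{k<N}\bigl(P^k s_{2k}+P^k(z-y)s_{2k+1}\bigr)$ has degree $\le 2N-1$ and matches the $N$-jets of $F$ at $\z_0$ and $\overline{\z_0}$, hence it is the Hermite interpolant of $F$ at the zeros of $P^N$; the Hermite remainder formula then gives the closed expression $F^{(N)}(z)=\tfrac{1}{2\pi\ui}\oint_{\gamma}\tfrac{F(\lambda)}{P(\lambda)^N(\lambda-z)}\,d\lambda$, with $\gamma$ a level curve $\{|P(\lambda)|=\rho'\}$ enclosing $\z_0,\overline{\z_0}$ and contained, with its interior, in $D$. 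Bounding $\|F\|$ by $M$ on the compact $\gamma$ and using $|P(\lambda)|=\rho'$ there gives $\|P(z)^N F^{(N)}(z)\|\le\bigl(|P(z)|/\rho'\bigr)^N C\le(r^2/\rho')^N C$ for $z$ in compact subsets of $\{|P(z)|<r^2\}$ with $r^2<\rho'$, which tends to $0$. Letting $\rho'$ range over all values with $\{|P|\le\rho'\}\subset D$ both proves convergence and identifies the radius of convergence with the one predicted by the Abel Theorem, so spherical analyticity holds at every $y$. Combined with the trivial fact that slice regular functions are slice, this establishes $(\mr i)$ and $(\mr{ii})$.
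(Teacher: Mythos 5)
Your forward implication (slice regular $\Rightarrow$ spherically analytic) is correct but follows a genuinely different route from the paper's. The paper fixes a splitting base $(1,J,J_1,JJ_1,\ldots,J_h,JJ_h)$, decomposes $f_J$ into $h+1$ ordinary holomorphic functions $\hat f_\ell:D\to\C$, and applies a purely complex expansion lemma (Lemma~\ref{lem:sto-holomorphic}) whose engine is the telescoping Cauchy--kernel identity \eqref{eq:cauchy-kernel}; the coefficients come out as contour integrals of $g(\zeta)\,\stx_{w,n+1}(\zeta)^{-1}$ over the Cassini oval, and the remainder estimate requires the oval--length bound of Lemma~\ref{lem:T}. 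You instead stay at the level of the $\Ac$--valued stem function $F$: you produce $s_0,s_1$ by solving the $2\times 2$ system at $w$ and $\overline{w}$, divide by the real polynomial $P(z)=\Delta_y(z)$, iterate, and control $P^NF^{(N)}$ by a Hermite--type integral along a level curve $\{|P|=\rho'\}$. This bypasses the splitting lemma and the oval--length computation entirely; the only extra point to verify (which you implicitly use and which does hold, since $P$ has real coefficients and $s_{2k},s_{2k+1}\in A$) is that each quotient $F^{(k)}$ is again a stem function. Both arguments produce the same coefficients and the same radius of convergence, and your closed remainder formula is the stem--level counterpart of the paper's $\mk{R}_{y,n}(f)$.

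The one genuinely loose step is the sliceness bootstrap in case $(\mr{ii})$ of the converse. ``Sliceness propagates to the connected $\OO_D$ by the identity principle for slice functions'' is not an argument: away from the Euclidean ball $B$ around the real point you do not yet know that $f$ is slice, so there is no slice function to which an identity principle could be applied. The paper's proof (carried out in Theorem~\ref{poweranalytic} and reused here) does the following instead: each restriction $f_I$ is real analytic on all of $D$ because of the local expansions available at \emph{every} point of $\OO_D$; the slice--defect $\tilde f_I(z):=f_I(z)-\frac12(f_J(z)+f_J(\overline z))+\frac I2\left(J(f_J(z)-f_J(\overline z))\right)$ vanishes on $B$ and is real analytic on $D$, hence vanishes identically; and Lemma~3.2 of \cite{Gh_Pe_GlobDiff} then converts the validity of the representation formula for all $I,J$ into sliceness of $f$. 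Your sentence must be replaced by this (or an equivalent) argument. The remainder of your converse --- rewriting the convergent spherical series at the stem level and deducing holomorphy of $F$ from locally uniform convergence of holomorphic stem functions --- is sound and parallels the paper's use of Lemma~\ref{lem:cullen} and Remark~\ref{sequence}.
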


Sect.~\ref{Spherical_expansion_for_slice_regular_functions} contains also estimates for the coefficients of the spherical expansion of a slice regular function and the integral expression of the remainder, with the related estimate in terms of the pseudo--metric $\sto_A$.
The estimates, the integral expression of the remainder and point (i) of Theorem~\ref{spherical_intro}, are new also in the quaternionic case.

\section{Preliminary results}
\label{Preliminary_results}

\subsection*{The Cullen derivative}

In Definition 2.1 of \cite{GeSt2006CR}, Gentili and Struppa introduced the notion of Cullen derivative $\partial_Cf$ of a slice regular function $f$ on a domain of the quaternionic space $\q$ (see also Definition 2.2 of \cite{GeSt2007Adv}). The same definition can be given on a subdomain $\OO_D$ of the quadratic cone of an alternative $^*$-algebra. Moreover, on $\OO_D\setminus\R$, the  definition can be extended to any real differentiable function.

Given $J \in \cS_A$ and $f \in \mc{S}^0(\OO_D,A)$, we denote by $\Phi_J:\C \lra \Q_A$ and $f_J:D \lra A$ the functions defined by setting
\begin{equation} \label{eq:defs}
\Phi_J(\alpha+\ui\beta):=\alpha+J\beta
\; \; \mbox{ and } \; \;
f_J(z):=f(\Phi_J(z))\:.
\end{equation}


\begin{lemma} \label{lem:cullen}
Let $f \in \mc{S}^1(\OO_D,A)$ and let $J \in \cS_A$. Then, for each $w \in D$, it holds:
\begin{equation} \label{eq:cullen^1}
\dd{f}{x}(\Phi_J(w))=\dd{f_J}{z}(w)
\; \mbox{ and } \; \; \dd{f}{x^c}(\Phi_J(w))=\dd{f_J}{\overline{z}}(w)\:,
\end{equation}
where $\partial/\partial z:=(1/2)(\partial/\partial \alpha-J \cdot \partial/\partial \beta)$ and $\partial/\partial \overline{z}:=(1/2)(\partial/\partial \alpha+J \cdot \partial/\partial \beta)$.
Furthermore, if $f \in \mc{S}^{\infty}(\OO_D,A)$ and $n \in \N$, then
\begin{equation} \label{eq:cullen^n}
\dd{^nf}{x^n}(\Phi_J(w))=\dd{^nf_J}{z^n}(w).
\end{equation}
\end{lemma}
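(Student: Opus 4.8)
The plan is to prove the two first-order identities in \eqref{eq:cullen^1} by a direct computation at the level of stem functions, and then to deduce \eqref{eq:cullen^n} by induction on $n$.

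First I would make the restriction $f_J$ explicit. By the definition of the induced slice function $\I(F)$ together with the definition of $f_J$ in \eqref{eq:defs}, for $w=\alpha+\ui\beta\in D$ one has $f_J(w)=f(\Phi_J(w))=F_1(w)+JF_2(w)$. Thus, on $D$, the function $f_J$ is assembled from the components $F_1,F_2$ of the stem function exactly as $\I(F)$ is, with the fixed element $J\in\cS_A$ playing the role of the imaginary unit. Next I would compute $\partial f_J/\partial z$ directly. Applying $\partial/\partial z=\tfrac12(\partial/\partial\alpha-J\cdot\partial/\partial\beta)$ to $f_J=F_1+JF_2$, and using that $J$ is constant (hence commutes with the real derivatives $\partial/\partial\alpha$ and $\partial/\partial\beta$) together with the left-alternative law $J(JF_2)=J^2F_2=-F_2$ — valid since $J^2=-1$ and, by Artin's theorem, the subalgebra of $A$ generated by $J$ and any single element is associative — I collect terms to obtain
\[
\dd{f_J}{z}=\frac12\left(\dd{F_1}{\alpha}+\dd{F_2}{\beta}\right)+\frac{J}{2}\left(\dd{F_2}{\alpha}-\dd{F_1}{\beta}\right).
\]

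On the other hand, writing $\partial F/\partial z=G_1+\ui G_2$ gives $G_1=\tfrac12(\partial F_1/\partial\alpha+\partial F_2/\partial\beta)$ and $G_2=\tfrac12(\partial F_2/\partial\alpha-\partial F_1/\partial\beta)$; recall that, by the very definition of the slice derivatives, $\partial F/\partial z$ is again a stem function, so that $\partial f/\partial x=\I(\partial F/\partial z)\in\mc{S}^0(\OO_D,A)$. Evaluating the induced slice function at $\Phi_J(w)$ then yields $\partial f/\partial x(\Phi_J(w))=G_1(w)+JG_2(w)$, which coincides term by term with $\partial f_J/\partial z(w)$. This proves the first identity in \eqref{eq:cullen^1}; the second is obtained verbatim, replacing $\partial/\partial z$ by $\partial/\partial\overline z=\tfrac12(\partial/\partial\alpha+J\cdot\partial/\partial\beta)$ and $\partial F/\partial z$ by $\partial F/\partial\overline z$.

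For the higher-order formula I would read the identity just proved as an equality of $A$-valued functions on $D$, namely $(\partial f/\partial x)_J=\partial f_J/\partial z$. If $f\in\mc{S}^\infty(\OO_D,A)$, then $\partial F/\partial z$ is a stem function of class $\mscr{C}^\infty$, so $g:=\partial f/\partial x$ again lies in $\mc{S}^\infty(\OO_D,A)$ and the base case applies to $g$. This gives $\partial^2 f/\partial x^2(\Phi_J(w))=\partial g_J/\partial z(w)=\partial^2 f_J/\partial z^2(w)$, and iterating the argument produces \eqref{eq:cullen^n} for every $n$. I expect the only real difficulty to be bookkeeping rather than conceptual: one must track the left multiplication by the fixed non-real element $J$ throughout the non-commutative algebra $A$, and invoke alternativity (Artin's theorem) repeatedly to guarantee that $J^2F_2=-F_2$ and, more generally, that the iterated operator $\partial^n/\partial z^n$ — which generates terms $J^k$ acting on the $A$-valued partial derivatives — is unambiguously defined. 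The commutation of mixed real partials (Schwarz's theorem) then ensures that $\partial/\partial z$ behaves exactly as in the classical complex case, so the induction closes without further subtlety.
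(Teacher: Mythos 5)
Your proposal is correct and follows essentially the same route as the paper: both arguments reduce to the identity $\I(\partial F/\partial z)(\Phi_J(w))=F_1$-$F_2$ combinations matching $\tfrac12(\partial_\alpha-J\partial_\beta)(F_1+JF_2)$ via $J(JF_2)=-F_2$, the paper merely writing it as one chain of equalities where you compute the two sides separately and compare. Your explicit induction for \eqref{eq:cullen^n} (applying the first-order case to $\partial f/\partial x=\I(\partial F/\partial z)\in\mc{S}^\infty$) is exactly what the paper's ``similarly'' leaves to the reader.
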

\begin{proof}
Let $F=F_1+\ui F_2:D \lra A \otimes \C$ be the stem function of class $\mscr{C}^1$ inducing $f$, let $w \in D$ and let $y:=\Phi_J(w)$.
Let $\partial_{\alpha}$ and $\partial_{\beta}$ denote $\partial/\partial \alpha$ and $\partial/\partial \beta$, respectively. 
 Since $2 \, \partial F/\partial z=(\partial_\alpha F_1+\partial_\beta F_2)+\ui(\partial_\alpha F_2-\partial_\beta F_1)$ and $f_J=F_1+JF_2$, we have:
\begin{align*}
2 \, \dd{f}{x}(y)=& \, 2 \, \I\left(\dd{F}{z}\right)(y)=
\left(\partial_\alpha{F_1}(w)+\partial_\beta{F_2}(w)\right)+J\left(\partial_\alpha{F_2}(w)-\partial_\beta{F_1}(w)\right)=\\
=&
\left(\partial_\alpha F_1 (w)+
J\partial_\alpha{F_2}(w)\right)-J\left(\partial_\beta{F_1}(w)+J\partial_\beta{F_2}(w)\right)=\\
=& \left(\partial_\alpha{f_J}-J\partial_\beta{f_J}\right)(w)=2 \, \dd{f_J}{z}(w).
\end{align*}
Similarly, one can prove that $(\partial f/\partial x^c)(y)=(\partial f_J/\partial\overline{z})(w)$ and for every $f \in \mc{S}^{\infty}(\OO_D,A)$, $(\partial^n f/\partial x^n)(y)=(\partial^n f_J/\partial z^n)(w)$.
\end{proof}

At a point $y=\Phi_J(w)\in\OO_D \setminus \R$, the Cullen derivative of a slice function $f$ is defined as $(\partial f_J/\partial{z})(w)$. Therefore the slice derivative $\partial f/\partial x$ coincides with $\partial_Cf$ on $\OO_D \setminus \R$.

\subsection*{The splitting property.} 
Suppose $\cS_A \neq \emptyset$. Since the elements of $\cS_A$ are square roots of~$-1$, the left multiplications by such elements are complex structures on $A$. In particular, it follows that 
the real dimension of $A$ is even and positive.
Let $h$ be the non-negative integer such that $d=\dim_{\R}A=2h+2$.

\begin{definition}
We say that $A$ has the \emph{splitting property} if, for each $J \in \cS_A$, there exist $J_1,\ldots,J_h \in A$ such that $(1,J,J_1,JJ_1,\ldots,J_h,JJ_h)$ is a real vector base of $A$, called a \emph{splitting base of $A$ associated with $J$}.
\end{definition}

The following result ensures that every real alternative algebra we are considering has this property.

\begin{lemma} \label{lem:fundamental}
Every real alternative algebra of finite dimension with unity, equipped with an anti-involution such that $\cS_A \neq \emptyset$, has the splitting property.
\end{lemma}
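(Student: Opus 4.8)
The plan is to reduce the splitting property to an elementary fact of linear algebra — the extendability of a linearly independent set to a basis — by exploiting the complex structure that left multiplication by $J$ puts on $A$, a structure already noted in the paragraph preceding the lemma.

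First I would fix $J \in \cS_A$ and consider the real--linear endomorphism $L_J \colon A \lra A$, $L_J(x) := Jx$. Since $A$ is alternative, Artin's theorem guarantees that the subalgebra generated by the two elements $J$ and $x$ is associative, so one may compute inside it: $L_J^2(x) = J(Jx) = (JJ)x = J^2 x = -x$ for every $x \in A$, because $J \in \cS_A$ means $J^2 = -1$. Hence $L_J^2 = -\id_A$, i.e.\ $L_J$ is a complex structure, and $A$ becomes a complex vector space with scalars acting through $\alpha\,\id_A + \beta L_J$ for $\alpha,\beta \in \R$. Its complex dimension is therefore $\dim_{\R}(A)/2 = h+1$, consistent with $d = 2h+2$.

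Next, since $1 \neq 0$, the singleton $\{1\}$ is linearly independent over $\C$ in this complex vector space; as $A$ has finite complex dimension $h+1$, I extend it to a $\C$--basis $\{1, J_1, \ldots, J_h\}$ with $J_1, \ldots, J_h \in A$. It remains to recall that the real vector space underlying a complex vector space with $\C$--basis $(e_0, \ldots, e_h)$ admits the real basis $(e_0, L_J e_0, e_1, L_J e_1, \ldots, e_h, L_J e_h)$, obtained by splitting each complex coordinate into its real and imaginary parts. Taking $e_0 = 1$ and $e_k = J_k$, and using $L_J 1 = J$ together with $L_J J_k = J J_k$, this real basis is exactly $(1, J, J_1, J J_1, \ldots, J_h, J J_h)$, which is the splitting base associated with $J$. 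Since $J \in \cS_A$ was arbitrary, $A$ has the splitting property.

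I do not expect a genuine obstacle here: the decisive identity $L_J^2 = -\id_A$ is an immediate consequence of alternativity via Artin's theorem, and everything else is standard linear algebra. The only point deserving explicit care is the passage from a complex basis to the associated real basis in the \emph{prescribed order}, which one must verify matches the ordering $(1, J, J_1, J J_1, \ldots)$ demanded in the definition of the splitting property.
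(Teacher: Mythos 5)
Your proposal is correct and follows essentially the same route as the paper: both endow $A$ with the complex structure given by left multiplication by $J$ (the identity $L_J^2=-\id_A$, which you justify via Artin's theorem, is the left--alternative law and is the fact the paper invokes in the paragraph preceding the lemma) and then read off the real splitting base from a complex basis containing $1$. No gaps.
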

\begin{proof}
Let $J\in\cS_A$. Denote by $A_J$ the complex vector space defined on $A$ by the left multiplication by $J$ and let $h+1=\dim_{\C}A_J$. Any complex basis $(1,J_1,\ldots,J_h)$ of $A_J$ defines a real basis $(1,J,J_1,JJ_1,\ldots,J_h,JJ_h)$ of $A$. 
\end{proof}

We have the following general splitting lemma.

\begin{lemma} \label{lem:splitting}
Let $f \in \mc{SR}(\OO_D,A)$, let $J \in \cS_A$, let $(1,J,J_1,JJ_1,\ldots,J_h,JJ_h)$ be a splitting base of $A$ associated with $J$ and let $f_{1,0},f_{2,0},\ldots,f_{1,h},f_{2,h}$ be the functions in $\mscr{C}^1(D,\R)$ such that $f_J=\sum_{\ell=0}^h(f_{1,\ell}J_{\ell}+f_{2,\ell}JJ_{\ell})$
, where $J_0:=1$. Then, for each $\ell \in \{0,1,\ldots,h\}$, $f_{1,\ell}$ and $f_{2,\ell}$ satisfy the following Cauchy--Riemann equations:
\[
\dd{f_{1,\ell}}{\alpha}=\dd{f_{2,\ell}}{\beta}
\; \; \mbox{ and }\; \;
\dd{f_{1,\ell}}{\beta}=-\dd{f_{2,\ell}}{\alpha}.
\]
\end{lemma}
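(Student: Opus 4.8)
The plan is to translate slice regularity into a single first-order differential identity for the restriction $f_J$, and then to read off the stated Cauchy--Riemann equations componentwise in the splitting base. First I would invoke the hypothesis $f \in \mc{SR}(\OO_D,A)$, which by definition means $\partial f/\partial x^c = 0$ on $\OO_D$. By Lemma~\ref{lem:cullen}, for every $w \in D$ we have $(\partial f_J/\partial \overline{z})(w) = (\partial f/\partial x^c)(\Phi_J(w)) = 0$, where $\partial/\partial \overline{z} = \tfrac{1}{2}(\partial/\partial \alpha + J \cdot \partial/\partial \beta)$ acts on the $A$-valued map $f_J$. Thus the entire content of the hypothesis is condensed into
\[
\dd{f_J}{\alpha} = -J \, \dd{f_J}{\beta} \quad \text{on } D.
\]

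Next I would substitute the splitting expansion $f_J = \sum_{\ell=0}^{h}(f_{1,\ell}J_\ell + f_{2,\ell}JJ_\ell)$ into both sides and compute. The left-hand side is immediate: it equals $\sum_{\ell}(\partial_\alpha f_{1,\ell}\,J_\ell + \partial_\alpha f_{2,\ell}\,JJ_\ell)$. For the right-hand side I must evaluate the two products $J\cdot J_\ell$ and $J\cdot(JJ_\ell)$. The first is simply the basis vector $JJ_\ell$; the second is where the \emph{alternative} (as opposed to merely associative) structure of $A$ enters, and I expect this to be the single point requiring care. By the left alternative law, $J(JJ_\ell) = (JJ)J_\ell = J^2 J_\ell = -J_\ell$, so no spurious associator terms appear. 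With these two products in hand, $-J\,\partial_\beta f_J$ collapses to $\sum_{\ell}(\partial_\beta f_{2,\ell}\,J_\ell - \partial_\beta f_{1,\ell}\,JJ_\ell)$.

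Finally, because $(1,J,J_1,JJ_1,\ldots,J_h,JJ_h)$ is a real basis of $A$, the families $\{J_\ell\}$ and $\{JJ_\ell\}$ are linearly independent over $\R$, so I may equate coefficients of $J_\ell$ and of $JJ_\ell$ separately on the two sides of the displayed identity. Matching the $J_\ell$-component yields $\partial f_{1,\ell}/\partial\alpha = \partial f_{2,\ell}/\partial\beta$, and matching the $JJ_\ell$-component yields $\partial f_{2,\ell}/\partial\alpha = -\partial f_{1,\ell}/\partial\beta$, which is the second stated equation. As this holds for every $\ell \in \{0,1,\ldots,h\}$, the proof is complete. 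Apart from the use of the alternative law noted above, every step is linear bookkeeping, so I anticipate no genuine obstacle.
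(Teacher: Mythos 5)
Your proof is correct and follows essentially the same route as the paper's: both reduce the hypothesis via Lemma~\ref{lem:cullen} to the identity $(\partial_\alpha + J\partial_\beta)f_J = 0$, expand $f_J$ in the splitting base, use the left alternative law to get $J(JJ_\ell) = -J_\ell$, and conclude by linear independence of the basis. Your explicit flagging of where alternativity (rather than associativity) is needed is a point the paper leaves implicit, but the argument is the same.
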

\begin{proof}
Let $w \in D$ and let $y:=\Phi_J(w)$. By Lemma \ref{lem:cullen}, we have that $(\partial f/\partial x^c)(y)=(\partial f/\partial \overline{z})(w)$. Since $(\partial f/\partial x^c)(y)=0$, we infer that 
\begin{align*}
0=& \, 2 \, (\partial f/\partial \overline{z})(w)=
\textstyle \big(\partial_{\alpha}+J\partial_{\beta}\big)f_J(w)=\sum_{\ell=0}^h\big((\partial_{\alpha}f_{1,\ell}(w))J_{\ell}+(\partial_{\alpha}f_{2,\ell}(w))JJ_{\ell}\big)+\\
&
\textstyle
+J\sum_{\ell=0}^h\big((\partial_{\beta}f_{1,\ell}(w))J_{\ell}+(\partial_{\beta}f_{2,\ell}(w))JJ_{\ell}\big)=\\
=& \,
\textstyle
\sum_{\ell=0}^h\big((\partial_{\alpha}f_{1,\ell}(w)-\partial_{\beta}f_{2,\ell}(w))J_{\ell}+(\partial_{\beta}f_{1,\ell}(w)+\partial_{\alpha}f_{2,\ell}(w))JJ_{\ell}\big),
\end{align*}
where $\partial_{\alpha}$ and $\partial_{\beta}$ denote $\partial/\partial \alpha$ and $\partial/\partial \beta$, respectively. It follows that $\partial_{\alpha}f_{1,\ell}(w)=\partial_{\beta}f_{2,\ell}(w)$ and $\partial_{\beta}f_{1,\ell}(w)=-\partial_{\alpha}f_{2,\ell}(w)$, as desired.
\end{proof}

\begin{remark}\label{sequence}
An immediate consequence of the previous Lemma is that, given a sequence $\{f_n\}_{n\in\N}$ of slice regular functions on $\OO_D$, uniformly convergent on compact subsets of $\OO_D$,  the limit of the sequence is slice regular on $\OO_D$.
\end{remark}

\subsection*{The norm $\boldsymbol{\| \cdot \|_A}$.} 
We recall that we are assuming that $A$ is equipped with a norm $\| \cdot \|_A$ having the following property:
\begin{equation} \label{eq:E}
\|x\|_A=\sqrt{n(x)} \; \; \mbox{ for each }x \in \Q_A\:.
\end{equation}
Note that for each $J\in\cS_A$ and each $z\in\C$, it holds $\|\Phi_J(z)\|=|z|$, since $\Phi_J(z)\in\Q_A$.

\begin{lemma} \label{lem:universal-norm}
Let $A$ be a real alternative algebra of finite dimension with unity, equipped with an anti--involution $\mk{a}$ and with the euclidean topology. Suppose that $\cS_A \neq \emptyset$ and there exists a norm $\| \cdot \|_A$ on $A$ satisfying $(\ref{eq:E})$. Then there exists a positive real constant $\mr{H}=\mr{H}(\mk{a},\| \cdot \|_A)$, depending only on $\mk{a}$ and on $\| \cdot \|_A$, with the following property: for~each $J \in \cS_A$, it is possible to find a splitting base $\mscr{B}_J=(1,J,J_1,JJ_1,\ldots,J_h,JJ_h)$ of $A$ associated with~$J$ such that $\|J_{\ell}\|_A=1$ for each $\ell \in \{1\ldots,h\}$ and $\|x\|_{\mscr{B}_J} \leq \mr{H} \, \|x\|_A$ for each $x \in A$.
\end{lemma}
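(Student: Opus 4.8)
The plan is to exploit the compactness of $\cS_A$ in the euclidean topology, constructing splitting bases continuously on small patches and then merging finitely many local norm--comparison constants into a single $\mr{H}$. The delicate point is the uniformity of $\mr{H}$ in $J$: a pointwise equivalence of norms only yields a constant depending on $J$, and there need not exist a globally continuous assignment $J\mapsto\mscr{B}_J$, so a local--to--global argument is unavoidable.

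First I would show that $\cS_A$ is compact. Every $J\in\cS_A$ is non--real (as $J^2=-1$), so $t(J),n(J)\in\R$ by the definition of $\Q_A$; writing $\tau:=t(J)$ we have $J^c=\tau-J$ and hence $n(J)=JJ^c=\tau J-J^2=\tau J+1$. Thus $\tau J=n(J)-1\in\R$, which forces $\tau=0$ (otherwise $J\in\R$), so $n(J)=1$ and, by \eqref{eq:E}, $\|J\|_A=\sqrt{n(J)}=1$. Hence $\cS_A$ is bounded. It is also closed: if $\cS_A\ni J_m\to J$, then $J^2=-1$, while continuity of $\mk{a}$ gives $J^c=\lim J_m^c=-\lim J_m=-J$, so $t(J)=0\in\R$, $n(J)=1\in\R$, $4n(J)>t(J)^2$ and $J\notin\R$; therefore $J\in\cS_A$. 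Being closed and bounded in the finite--dimensional space $A$, $\cS_A$ is compact.

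Next I would build splitting bases locally. Fix $J_0\in\cS_A$. By Lemma \ref{lem:fundamental} there is a splitting base associated with $J_0$; I keep its spatial vectors $J_1^0,\dots,J_h^0\in A$ and rescale them so that $\|J_\ell^0\|_A=1$. The requirement that
\[
(1,J,J_1^0,JJ_1^0,\dots,J_h^0,JJ_h^0)
\]
be a real base of $A$ is the nonvanishing of a determinant that is polynomial in the coordinates of $J$ (each block $JJ_\ell^0$ is linear in $J$) and holds at $J=J_0$; hence it persists on a euclidean open neighborhood $U_{J_0}\ni J_0$. For every $J\in U_{J_0}$ this tuple is therefore a splitting base $\mscr{B}_J$ whose spatial vectors $J_\ell^0$ are fixed and of unit $\|\cdot\|_A$--norm, and in which only the single column $J$ varies; consequently the change--of--base matrix and its inverse, and thus the map $(J,x)\mapsto\|x\|_{\mscr{B}_J}$, depend continuously on $(J,x)\in U_{J_0}\times A$.

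Finally I would patch. Choosing an open ball $B_{J_0}$ with $\overline{B_{J_0}}\subseteq U_{J_0}$, the continuous function $(J,x)\mapsto\|x\|_{\mscr{B}_J}$ is bounded on the compact set $\overline{B_{J_0}}\times\{x\in A:\|x\|_A=1\}$ by some $C_{J_0}<+\infty$, so by homogeneity $\|x\|_{\mscr{B}_J}\le C_{J_0}\|x\|_A$ for all $x\in A$ and all $J\in\overline{B_{J_0}}$. The balls $\{B_{J_0}\}_{J_0\in\cS_A}$ cover the compact set $\cS_A$, so finitely many $B^{(1)},\dots,B^{(N)}$ suffice; set $\mr{H}:=\max_{1\le i\le N}C^{(i)}$. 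Given any $J\in\cS_A$, it lies in some $B^{(i)}$, and the associated base $\mscr{B}_J$ (built from the $i$-th reference vectors) satisfies $\|J_\ell\|_A=1$ for all $\ell$ and $\|x\|_{\mscr{B}_J}\le C^{(i)}\|x\|_A\le\mr{H}\|x\|_A$ for all $x\in A$. Since the finitely many choices entering $\mr{H}$ are fixed once $\mk{a}$ (hence $\cS_A$) and $\|\cdot\|_A$ are given, $\mr{H}$ depends only on $\mk{a}$ and $\|\cdot\|_A$, as required. The one genuine obstacle, the uniformity in $J$, is precisely what freezing the spatial vectors on each patch, together with the compactness of $\cS_A$, overcomes.
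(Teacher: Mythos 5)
Your proof is correct, and its core mechanism is the same as the paper's: compactness of $\cS_A$ (which you establish directly from $t(J)=0$, $n(J)=1$ and \eqref{eq:E}, where the paper cites $\cS_A=t^{-1}(0)\cap n^{-1}(1)$), plus the observation that one may freeze the normalized spatial vectors $J_1,\ldots,J_h$ of a splitting base and perturb only $J$, since being a basis is an open (determinant) condition. Where you diverge is in the globalization step. The paper first builds a \emph{globally continuous} selection $\mc{J}\in\mk{S}$ of spatial vectors over all of $\cS_A$ (patching the locally constant choices by a partition of unity) and then takes a single maximum of a continuous matrix-norm function over the compact set $\cS_A$; you instead extract a finite subcover of $\cS_A$ by the neighborhoods on which the frozen tuple works, bound the continuous function $(J,x)\mapsto\|x\|_{\mscr{B}_J}$ on each compact piece, and take the maximum of finitely many constants. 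Your route is slightly more elementary and sidesteps a delicate point in the paper's argument: the set of tuples yielding a basis is open but not convex, so a partition-of-unity blend of different local choices requires some care to remain a splitting base, whereas a finite subcover needs no continuous selection at all. (The only cosmetic difference is that your $\mr{H}$ depends on the particular finite cover chosen; the paper makes its constant canonical by taking an infimum over all choices, but for the stated purpose --- uniformity in $J$ --- either is fine.)
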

\begin{proof}
Let $\mk{B}$ be the set of all real vector bases of $A$ and let $\mc{V}=(v_1,\ldots,v_d) \in \mk{B}$. 
Since the norm $\| \cdot \|_A$ is continuous and the set $\mc{S}_{\mc{V}}:=\{x \in A \, | \, \|x\|_{\mc{V}}=1\}$ is compact, we have that $\| \cdot \|_A$ assumes positive minimum on $\mc{S}_{\mc{V}}$. 
Define:
\[
\textstyle
m(\mc{V},\| \cdot \|_A):=\big(\min_{\, x \in  \mc{S}_{\mc{V}}}\|x\|_A\,\big)^{-1}>0\:.
\]
By homogeneity, we infer at once that
\begin{equation} \label{eq:estimate-1}
\|x\|_{\mc{V}} \leq m(\mc{V},\| \cdot \|_A) \, \|x\|_A 
\; \mbox{ for each }x \in A\:.
\end{equation}
Since $\mk{a}$ is 
continuous, 
 the trace $t$ and the squared norm $n$ are continuous as well. By Proposition 1 of \cite{GhPe_AIM}, we have that $\cS_A=t^{-1}(0) \cap n^{-1}(1)$. On the other hand, by (\ref{eq:E}), $\cS_A$ is contained in the compact subset $\mc{S}_A:=\{x \in A \, | \, \|x\|_A=1\}$ of $A$. In this way, we infer that $\cS_A$ is compact as well, because it is closed in $A$ and contained in $\mc{S}_A$.

Denote by $\mk{S}$ the set of all continuous maps $\mc{J}:\cS_A \lra \mc{S}_A^h$ such that, for each $J \in \cS_A$, the $d$--uple $\mscr{B}({\mc{J},J}):=(1,J,J_1(J),J \cdot J_1(J),\ldots,J_h(J),J \cdot J_h(J))$ is a splitting base of $A$ associated with $J$, where $(J_1(J),\ldots,J_h(J))=\mc{J}(J)$. We show that $\mk{S} \neq \emptyset$.
For every fixed $J\in\cS_A$, let $(1,J,J_1(J),J \cdot J_1(J),\ldots,J_h(J),J \cdot J_h(J)$ be a splitting base of $A$ associated with $J$. Denote by $A_J$ the complex vector space defined on $A$ by the left multiplication by $J$. By continuity, for every $J' \in \cS_A$ sufficiently close to $J$, the set  $(1,J',J_1,J'J_1,\ldots,J_h,J'J_h)$ is still a $\R$--basis of $A$, and therefore $(1,J_1,\ldots,J_h)$ is a $\C$--basis of $A_{J'}$. This means that, locally near $J$, we can define the map $\mc{J}:\cS_A \lra  \mc{S}_A^h$ as the constant mapping $J'\mapsto (J_1,\ldots,J_h)$ after normalization w.r.t.~$\|\cdot\|_A$. A partition of unity argument allows to conclude that $\mk{S} \neq \emptyset$.

Let $\mc{J}=(J_1,\ldots,J_h) \in \mk{S}$ and let $\mk{M}(\mc{V},\mc{J}):\cS_A \lra \R^{d \times d}$ be the continuous map, sending $J$ into the real $d \times d$ matrix $\mk{M}(\mc{V},\mc{J})(J)$ having as coefficients 
 of the $j^{\mr{th}}$-column the coordinates of $v_j$ w.r.t.~$\mc{J}(J)$. 
Let $\|\cdot\|_2$ denote the $L^2$ matrix norm and define:
\[
M(\mc{V},\mc{J},\mk{a}):=\max_{\cS_A} \|\mk{M}(\mc{V},\mc{J})(J)\|_2>0\:.
\]
Let $J \in \cS_A$ and let $x \in A$. 
Denote by $\mr{x}$ and $\mr{y}$ the  column vectors 
of the coordinates of $x$ w.r.t.~the bases $\mc{V}$ and $\mscr{B}_{\mc{J}(J)}$, respectively. Since $\mr{y}=\mk{M}(\mc{V},\mc{J})(J) \cdot \mr{x}$, we infer that
\begin{equation} \label{eq:estimate-2}
\textstyle
\|x\|_{\mscr{B}_{\mc{J}(J)}}
\leq 
M(\mc{V},\mc{J},\mk{a})\|x\|_{\mc{V}}\:.
\end{equation}
By combining (\ref{eq:estimate-1}) with (\ref{eq:estimate-2}), we obtain that
\[
\|x\|_{\mscr{B}_{\mc{J}(J)}} \leq M(\mc{V},\mc{J},\mk{a}) \cdot m(\mc{V},\| \cdot \|_A) \cdot \|x\|_A
\]
for each $J \in \cS_A$ and for each $x \in A$. 
In particular, taking $x=J$ we get that $ M(\mc{V},\mc{J},\mk{a}) \cdot m(\mc{V},\| \cdot \|_A)\ge1$.
Now it suffices to define $\mr{H}(\mk{a},\| \cdot \|_A)$ as follows:
\[
\mr{H}(\mk{a},\| \cdot \|_A):=2\left(\inf_{\mc{V} \in \mk{B},\mc{J} \in \mk{S}}M(\mc{V},\mc{J},\mk{a}) \cdot m(\mc{V},\| \cdot \|_A)\right)>0.
\]
The proof is complete.
\end{proof}

\begin{remark}
If $A=\q$ or $\oc$, with $x^c=\bar x$ the usual conjugation and $\| x \|_A=\sqrt{x\bar x}$ the euclidean norm of $\R^4$ and $\R^8$, respectively, the constant $\mr{H}$ in the previous Lemma can be taken equal to 1. Given $J\in\cS_A$, there exists an orthonormal splitting basis $\mscr{B}_J$ obtained by completing the set $\{1,J\}$ to a orthonormal basis.
 Therefore $\|x\|_{\mscr{B}_J}=\|x\|_A$ for every $x\in A$. 

If $A$ is the real Clifford algebra $\R_n$ with signature $(0,n)$, with $x^c$ the Clifford conjugation, there are (at least) two norms satisfying condition \eqref{eq:E}: the euclidean norm induced by $\R^{2^n}$ and the \emph{Clifford operator norm}, as defined in \cite[(7.20)]{GilbertMurray}.

\end{remark}

Given an element $a$ of $A$ and a sequence $\{a_n\}_{n \in \N}$ in $A$, we will write $a=\sum_{n \in \N}a_n$ meaning that the series $\sum_{n \in \N}a_n$ converges to $a$ in $A$. This is equivalent to say that $\lim_{n \rightarrow +\infty}\|a-\sum_{k=0}^na_k\|_A=0$.
The reader observes that, being the real dimension of $A$ finite, the topology induced by $\| \cdot \|_A$ on $A$ coincides with the euclidean one.

\subsection*{The metric $\boldsymbol{\sigma_A}$ on $\boldsymbol{\Q_A}$.} The definition of the metric $\sigma$ on $\q$ given in \cite{GeSto2012MathAnn} can be mimicked to define the following function $\sigma_A:\Q_A \times \Q_A \lra \R^+$. Let $x,y \in \Q_A$ with $y=\xi+J\eta$ for some $\xi,\eta \in \R$ and $J \in \cS_A$. Define
\begin{equation*}
\sigma_A(x,y):=
\begin{cases}
 \|x-y\|_A &\text{\quad if }x\in \C_J\\
\sqrt{{|\re(x)-\re(y)|}^2+{\left(\|\im(x)\|_A+\|\im(y)\|_A\right)}^2} &\text{\quad if }x\notin \C_J
\end{cases}
\end{equation*}
Evidently, $\sigma_{\q}$ coincides with $\sigma$. Let $\alpha,\beta \in \R$ and $I \in \cS_A$ such that $x=\alpha+I\beta$. Define $z:=\alpha+\ui\beta$ and $w:=\xi+\ui\eta$. Proceeding as in the proof of Lemma 1 of~\cite{GeSto2012MathAnn}, we obtain:
\begin{equation} \label{eq:max}
\sigma_A(x,y)=
\begin{cases}
 |z-w| &\text{\quad if }x\in \C_J\\
\max\{|z-w|,|z-\overline{w}|\} &\text{\quad if }x\notin \C_J
\end{cases}
\end{equation}
Furthermore, by using (\ref{eq:max}) exactly as in Section 3 of \cite{GeSto2012MathAnn}, we see that \textit{$\sigma_A$ is a metric on $\Q_A$}. 

Let $r \in \R^+$ and let $\Sigma_A(y,r)$ be the $\sigma_A$--ball of $\Q_A$ centered at $y$ of radius $r$; that is, $\Sigma_A(y,r):=\{p \in \Q_A \, | \, \sigma_A(p,y)<r\}$. Observe that the subset of $\C$ consisting of points $z$ such that $\max\{|z-w|,|z-\overline{w}|\}<r$ is equal to the intersection $B(w,r) \cap B(\overline{w},r)$, where $B(w,r)$ is the open ball of $\C$ centered at $w$ of radius~$r$. In this way, if we define, for $J\in\cS_A$ such that $y\in\C_J$,
\begin{equation} \label{eq:J-ball}
B_J(y,r):=\{p \in \C_J \,  | \, \|p-y\|_A<r\}
\end{equation}
and
\begin{equation} \label{eq:OO-ball}
\OO(y,r) \mbox{ as the circularization of }B(w,r) \cap B(\overline{w},r),
\end{equation}
then we obtain that
\begin{equation} \label{eq:sigma-ball}
\Sigma_A(y,r)=B_J(y,r) \cup \OO(y,r).
\end{equation}
On the reduced quaternions $\R+i\R+j\R$, such a decomposition of $\Sigma_A(y,r)$ can be concretely visualized: see Figure 2 of \cite{GeSto2012MathAnn}.

Let us prove that $\|x-y\|_A \leq \sigma_A(x,y)$. If $x \in \C_J$, then $\|x-y\|_A=\sigma_A(x,y)$. Suppose $x \not\in \C_J$ and hence $x,y \not\in \R$. Up to replace $I$ with $-I$ and $J$ with $-J$, we may assume that $\beta$ and $\eta$ are positive. In this way, by (\ref{eq:max}), we have that $\sigma_A(x,y)=|z-\overline{w}|$. Denote by $p$ the real number aligned with $z$ and $\overline{w}$. By combining the equality $\sigma_A(x,y)=|z-p|+|p-\overline{w}|$ and (\ref{eq:E}), we infer that
\[
\|x-y\|_A \leq \|x-p\|_A+\|p-y\|_A=|z-p|+|p-w|=|z-p|+|p-\overline{w}|=\sigma_A(x,y),
\]
as desired. It follows that the \textit{topology of $\Q_A$ induced by $\sigma_A$ is finer than the euclidean one}. Thanks to (\ref{eq:sigma-ball}), they coincide if and only if $\cS_A$ is discrete w.r.t.~the euclidean topology.

\subsection*{The Cassini pseudo-metric $\boldsymbol{\sto_A}$ on $\boldsymbol{\Q_A}$.} 
For each $w \in \C$, denote by $\Delta_w:\C \lra \C$ the polynomial function defined by setting
\[
\Delta_w(z):=(z-w)(z-\overline{w})=z^2-z(w+\overline{w})+w\overline{w}.
\]
Let $\sto:\C \times \C \lra \R^+$ be the function defined as follows:
\[
\sto(z,w):=
\sqrt{|\Delta_w(z)|}.
\]
Let $r \in \R^+$ and let $\Sto(w,r):=\{z \in \C\, | \, \sto(z,w)<r\}$. 
The boundaries of the $\sto$--balls $\Sto(w,r)$ are \emph{Cassini ovals} for $w\notin\R$ and circles for $w\in\R$. 


Define $\C^+:=\{\alpha+\ui \beta \in \C \, | \, \beta \geq 0\}$.
\begin{lemma} \label{lem:tau}
The function $\sto$ is a pseudo--metric on $\C$ and its restriction on $\C^+ \times \C^+$ is a metric on $\C^+$.
\end{lemma}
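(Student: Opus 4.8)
The plan is to verify the pseudo-metric axioms one at a time, isolating the triangle inequality as the only substantial point. Nonnegativity is immediate from the definition, and $\sto(z,z)=\sqrt{|(z-z)(z-\overline z)|}=0$. Symmetry follows from the reflection identity $|w-\overline z|=|z-\overline w|$ (moduli are invariant under conjugation), which gives $\sto(w,z)=\sqrt{|w-z|\,|w-\overline z|}=\sqrt{|z-w|\,|z-\overline w|}=\sto(z,w)$. Granting that $\sto$ is a pseudo-metric, the second assertion is then quick: $\sto(z,w)=0$ forces $|z-w|\,|z-\overline w|=0$, i.e.\ $z=w$ or $z=\overline w$; but if $z,w\in\C^+$ and $z=\overline w$, then $\im z=-\im w$ with both imaginary parts nonnegative, forcing $\im z=\im w=0$, whence $z=w$. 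Hence $\sto$ restricted to $\C^+$ separates points and is a metric.

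For the triangle inequality $\sto(z_1,z_3)\le\sto(z_1,z_2)+\sto(z_2,z_3)$ I would first reduce to a purely real estimate. Put $a:=|z_1-z_2|$, $a':=|z_1-\overline{z_2}|$, $b:=|z_2-z_3|$, $b':=|z_2-\overline{z_3}|$, so that $\sto(z_1,z_2)=\sqrt{aa'}$ and $\sto(z_2,z_3)=\sqrt{bb'}$. Applying the ordinary triangle inequality in $\C$ through either $z_2$ or $\overline{z_2}$, together with $|\overline{z_2}-z_3|=|z_2-\overline{z_3}|$ and $|\overline{z_2}-\overline{z_3}|=|z_2-z_3|$, yields
\[
|z_1-z_3|\le\min\{a+b,\,a'+b'\}
\qquad\text{and}\qquad
|z_1-\overline{z_3}|\le\min\{a+b',\,a'+b\}.
\]
Multiplying these and recalling $\sto(z_1,z_3)^2=|z_1-z_3|\,|z_1-\overline{z_3}|$, the whole inequality reduces to the elementary claim that, for all $a,a',b,b'\ge0$,
\[
\min\{a+b,\,a'+b'\}\cdot\min\{a+b',\,a'+b\}\le\big(\sqrt{aa'}+\sqrt{bb'}\big)^2.
\]

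The heart of the matter is this last inequality, and I expect it to be the main obstacle: it is genuinely a \emph{minimum} inequality, since bounding each minimum by the corresponding geometric mean is too lossy (that weaker estimate already fails when $a,a'$ are badly unbalanced), so the two minima must be retained. I would exploit its symmetries: both sides are invariant under $a\leftrightarrow a'$, under $b\leftrightarrow b'$, and under the simultaneous swap $(a,a')\leftrightarrow(b,b')$. Using the first two, assume $a\le a'$ and $b\le b'$, so the first minimum is $a+b$; using the third, assume moreover $a'-a\ge b'-b$, which makes the second minimum equal $a+b'$. It then remains to show $(a+b)(a+b')\le aa'+bb'+2\sqrt{aa'bb'}$. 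Writing the left-hand side as $a(a+b+b')+bb'$ and using $a'-a\ge b'-b$ to get $a+b+b'\le a'+2b$, one obtains $(a+b)(a+b')\le aa'+2ab+bb'$; finally $2ab=2\sqrt{a^2b^2}\le2\sqrt{aa'bb'}$ because $a\le a'$ and $b\le b'$, giving exactly the desired right-hand side. Apart from finding this chain of elementary steps and the reduction above, the argument is routine.
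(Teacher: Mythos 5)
Your proof is correct and follows essentially the same route as the paper's: both establish the triangle inequality by applying the ordinary triangle inequality through $z_2$ and through $\overline{z_2}$, multiplying the two resulting bounds, and concluding with the same elementary chain that ends in the step $2ab\le 2\sqrt{aa'bb'}$. The only difference is in the normalization: the paper first reduces to $z,w,y\in\C^{+}$ via conjugation-invariance of $\sto$ (which forces the unprimed distances to be $\le$ the primed ones geometrically) and then swaps $z$ and $w$ if necessary, whereas you isolate a purely real four-variable inequality and obtain the same normalization from its symmetries --- a repackaging of the same argument rather than a genuinely different one.
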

\begin{proof}
Firstly, observe that $\sto(z,w)=\sto(z,\overline w)$ and $\sto(z,w)=0$ if and only if $z\in\{w,\overline w\}$.
Secondly, the function $\sto$ is symmetric: $|\Delta_w(z)|=|z-w||z-\overline w|=|w-z||w-\overline z|=|\Delta_z(w)|$.
In order to prove the triangle inequality, assume that $z,w,y\in\C^+$ and let $\alpha:=|z-w|$, $\beta=|z-y|$, $\gamma:=|w-y|$. Then $\alpha\le\alpha':=|z-\overline w|$, $\beta\le\beta':=|z-\overline y|$, $\gamma\le\gamma':=|y-\overline w|$.
Assume that $\beta+\gamma'\le\beta'+\gamma$ (otherwise swap $z$ and $w$). Since $\alpha=|z-w|\le|z-y|+|y-w|=\beta+\gamma$ and $\alpha'=|z-\overline w|\le|z-y|+|y-\overline w|=\beta+\gamma'$, we get
\begin{align*}
\alpha\alpha'\le(\beta+\gamma)(\beta+\gamma')&=\beta(\beta+\gamma')+\beta\gamma+\gamma\gamma'\le \beta(\beta'+\gamma)+\beta\gamma+\gamma\gamma'\\
&\le \beta\beta'+2\sqrt{\beta\beta'\gamma\gamma'}+\gamma\gamma'=(\sqrt{\beta\beta'}+\sqrt{\gamma\gamma'})^2\:.
\end{align*}
Therefore $\sqrt{|\Delta_w(z)|}=\sqrt{\alpha\alpha'}\le \sqrt{\beta\beta'}+\sqrt{\gamma\gamma'}=
\sqrt{|\Delta_y(z)|}+\sqrt{|\Delta_w(y)|}$.
\end{proof}

It is important to observe that, for every $z,w\in\C$, it holds:
\begin{equation} \label{eq:sto-inequality}
\sqrt{|\Delta_w(z)|+|\im(w)|^2}-|\im(w)| \leq |z-w| \leq \sqrt{|\Delta_w(z)|+|\im(w)|^2}+|\im(w)|.
\end{equation}
In fact, if $|z-w|>\sqrt{|\Delta_w(z)|+|\im(w)|^2}+|\im(w)|$, then
\[
|z-\overline{w}| \geq |z-w|-|2\im(w)|>\sqrt{|\Delta_w(z)|+|\im(w)|^2}-|\im(w)|
\]
and hence $|\Delta_w(z)|=|z-w||z-\overline{w}|>|\Delta_w(z)|$, which is impossible. The inequality $\sqrt{|\Delta_w(z)|+|\im(w)|^2}-|\im(w)| \leq |z-w|$ can be proved in a similar way. A generalized version of this argument was employed in \cite{StoppatoAdvMath2012}  (see Lemma~2.3) to prove the quaternionic version of the preceding inequalities.

\begin{definition}
Let $\sto_A:\Q_A \times \Q_A \lra \R^+$ be the function  defined as follows. If $x=\alpha+I\beta$ and $y=\xi+J\eta$ are points of $\Q_A$ with $\alpha,\beta,\xi,\eta \in \R$  and $I,J \in \cS_A$, then we set
\[
\sto_A(x,y):=\sto(z,w),
\]
where $z:=\alpha+\ui\beta$ and $w:=\xi+\ui\eta$. 
\end{definition}

Thanks to Lemma~\ref{lem:tau}, $\sto_A$ is symmetric w.r.t.\ $x$ and~$y$, and it satisfies the triangle inequality. Furthermore, $\sto_A(x,y)=0$ if and only if $\cS_x=\cS_y$. In conclusion, it turns out that \textit{$\sto_A$ is a pseudo--metric on $\Q_A$}, which will be called the \emph{Cassini pseudo--metric} on $\Q_A$.

Denote by $\Sto_A(y,r)$ the $\sto_A$--ball of $\Q_A$ centered at $y$ of radius $r$; that is, $\Sto_A(y,r):=\{x \in \Q_A \, | \, \sto_A(x,y)<r\}$. By definition of $\sto_A$, it follows immediately that
$\Sto_A(y,r)$ is the circularization of $\Sto(w,r)$.
As a consequence, the \textit{topology on $\Q_A$ induced by $\sto_A$ is strictly coarser than the euclidean one}. It is worth mentioning that $\sto_A(x,y)$ can be computed by a quite explicit formula. As usual, we denote by $\Delta_y:\Q_A \lra \Q_A$ the characteristic polynomial of $y$:
\[
\Delta_y(x):=x^2-xt(y)+n(y).
\]
Note that $\Delta_y$ takes values in the quadratic cone, since $x\in\C_I$ implies that $\Delta_y(x)\in\C_I\subset\Q_A$.
If $x=\alpha+I\beta$, $y=\xi+J\eta$, $z=\alpha+\ui\beta$ and $w=\xi+\ui\eta$ as above, then
\begin{align} \label{eq:delta-yx}
\Delta_y(x)=&
\, (\alpha+I\beta)^2-(\alpha+I\beta)(2 \, \xi)+(\xi^2+\eta^2)= \nonumber\\
=&
\, (\alpha+I\beta)^2-(\alpha+I\beta)(w+\overline{w})+w\overline{w}= \nonumber\\
=& \, \re(\Delta_w(z))+I \cdot \im(\Delta_w(z))
\end{align}
and hence, by (\ref{eq:E}), we have that 
\begin{equation} \label{eq:delta-yx-2}
\|\Delta_y(x)\|_A=|\Delta_w(z)|.
\end{equation}
In this way, $\sto_A$ can be expressed explicitly as follows:
\begin{equation} \label{eq:sto-explicit}
\sto_A(x,y)=\sqrt{\|\Delta_y(x)\|_A}.
\end{equation}

\section{Abel Theorem for power and spherical series}
\label{Abel_Theorem_for_power_and_spherical_series}

\subsection{Radii of $\boldsymbol{\sigma_A}$- and of $\boldsymbol{\sto_A}$-convergence} 
Let $\mc{S}_A:=\{x \in A \, | \, \|x\|_A=1\}$. 
The set $\mc{S}_A$ is compact, and the same is true for the set
\[
\mc{S}_A\cap \Q_A=\{\alpha+\beta J\in A\;|\; \alpha,\beta\in\R,\alpha^2+\beta^2=1, J\in\cS_A\}.
\]
Thanks to this fact, we can define the positive real constants $c_A$ and $C_A$ as follows:
\[
c_A:=\min_{x,z \in\mc{S}_A\cap \Q_A, y\in \mc{S}_A}\|(xy)z\|_A,\quad C_A:=\max_{x, y \in \mc{S}_A}\|xy\|_A.
\]
It follows immediately that
\begin{align} \label{eq:quasi-banach}
c_A\|x\|_A\|y\|_A &\leq \|xy\|_A \quad\forall x,y\in A \text{ such that $x\in\Q_A$ or $y$}\in\Q_A,\notag\\
\|xy\|_A &\leq C_A\|x\|_A\|y\|_A\quad\forall x,y\in A.
\end{align}


Given $y \in A$ and $n \in \N$, we denote by $(x-y)^{\punto n}$ the value at $x$ of the slice function on $\Q_A$ induced by the stem function $(z-y)^n$; that is, $(x-y)^{\punto n}:=\I((z-y)^n)(x)$.
Note that the slice function $(x-y)^{\punto n}$ coincides with the power $(x-y)^{* n}$ w.r.t.~the \emph{star product}  of power series (cf.~\cite{GeSto2008Mich} for the quaternionic case).

\begin{proposition} \label{prop:estimates}
Let $y \in \Q_A$. The following two statements hold:
\begin{itemize}
 \item[$(\mr{i})$] For each $x \in \Q_A$ and for each $n \in \N$, we have:
 \begin{equation} \label{eq:estimate-punto}
 \|(x-y)^{\punto n}\|_A \leq  C_A(1+C_A) \, \sigma_A(x,y)^n.
 \end{equation}
 Furthermore, it holds:
  \begin{equation} \label{eq:Abel}
   \lim_{n \rightarrow +\infty}\big(\|(x-y)^{\punto n}\|_A\big)^{1/n}=\sigma_A(x,y)\,.
  \end{equation}
 \item[$(\mr{ii})$] For each $x \in \Q_A$, we have:
  \begin{equation} \label{eq:estimate-sto}
   \|x-y\|_A \leq C_A(1+C_A) \, \big(\sto_A(x,y)+2 \, \|\im(y)\|_A\big)\,.
  \end{equation}
\end{itemize}
\end{proposition}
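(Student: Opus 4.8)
The plan is to reduce both parts to what happens on the single slice $\C_J$ containing $y$, together with the representation formula and the two constants $c_A,C_A$ of \eqref{eq:quasi-banach}. Throughout, fix $J\in\cS_A$ with $y=\xi+J\eta\in\C_J$, write $x=\alpha+I\beta$, and set $z:=\alpha+\ui\beta$, $w:=\xi+\ui\eta$, $r_1:=|z-w|$ and $r_2:=|z-\overline w|$, so that $\sigma_A(x,y)=\max\{r_1,r_2\}$ by \eqref{eq:max}. The starting observation is that $(x-y)^{\punto n}$ is elementary on $\C_J$ itself: since $y\in\C_J$, the stem components of $z-y$, and hence of $(z-y)^n$, are $\C_J$--valued, and on $\C_J$ the slice product of two functions with $\C_J$--valued components coincides with the pointwise product (all multiplications take place in the commutative associative subalgebra $\C_J$). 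Consequently $p:=(x-y)^{\punto n}|_{x=\alpha+J\beta}=(\alpha+J\beta-y)^{n}$ and $q:=(x-y)^{\punto n}|_{x=\alpha-J\beta}=(\alpha-J\beta-y)^{n}$, both taken in $\C_J\cong\C$; as $\|\cdot\|_A$ is multiplicative on $\C_J$ by \eqref{eq:E}, we get $\|p\|_A=r_1^{\,n}$ and $\|q\|_A=r_2^{\,n}$.

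Evaluating the slice function at $\alpha\pm J\beta$ identifies the stem components, and substituting into the value at $\alpha+I\beta$ yields the representation formula $(x-y)^{\punto n}=\tfrac12(p+q)-\tfrac12\,IJ(p-q)$, where $p,q\in\C_J\subset\Q_A$. For \eqref{eq:estimate-punto} I would estimate directly, using the triangle inequality, the identity $\|J(p-q)\|_A=\|p-q\|_A$, and the upper bound in \eqref{eq:quasi-banach}:
\[
\|(x-y)^{\punto n}\|_A\le\tfrac12\|p+q\|_A+\tfrac{C_A}{2}\|p-q\|_A\le(1+C_A)\max\{\|p\|_A,\|q\|_A\}=(1+C_A)\,\sigma_A(x,y)^n,
\]
which is even slightly sharper than the stated bound since $C_A\ge\|1\|_A=1$. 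When $x\in\C_J$ one has $I=J$, hence $IJ=-1$ and the formula collapses to $(x-y)^{\punto n}=p$, giving the exact identity $\|(x-y)^{\punto n}\|_A=r_1^{\,n}=\sigma_A(x,y)^n$.

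For the Abel limit \eqref{eq:Abel}, the inequality $\limsup_n(\|(x-y)^{\punto n}\|_A)^{1/n}\le\sigma_A(x,y)$ is immediate from \eqref{eq:estimate-punto}; the reverse inequality is the crux. Writing $x^c=\alpha-I\beta$ and using the representation formula at $x$ and at $x^c$ gives $p+q=(x-y)^{\punto n}+(x^c-y)^{\punto n}$ and $(x-y)^{\punto n}-(x^c-y)^{\punto n}=-IJ(p-q)$ (here $(x^c-y)^{\punto n}$ denotes the value of the same slice function at $x^c$). Combining the triangle inequality with the \emph{lower} bound in \eqref{eq:quasi-banach}, applied to $I\cdot J(p-q)$ with $J(p-q)\in\Q_A$, we obtain
\[
\|(x-y)^{\punto n}\|_A+\|(x^c-y)^{\punto n}\|_A\ge c_A\max\{\|p+q\|_A,\|p-q\|_A\}\ge c_A\,\sigma_A(x,y)^n,
\]
the last step because $\|p+q\|_A+\|p-q\|_A\ge 2\max\{\|p\|_A,\|q\|_A\}$. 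This controls the \emph{sum} of the values at $x$ and at its reflection $x^c$, whereas \eqref{eq:Abel} concerns $x$ alone, so the main obstacle is to show that these two values grow at the same exponential rate. I expect to settle this by passing to a splitting base $\mscr{B}_I$ associated with $I$ (Lemma~\ref{lem:universal-norm}): by the splitting Lemma~\ref{lem:splitting} together with the stem relations $F_1(\overline z)=F_1(z)$, $F_2(\overline z)=-F_2(z)$, the Euclidean coordinate norm $\|\cdot\|_{\mscr{B}_I}$ takes the same value at $x$ and at $x^c$; and since $\|\cdot\|_{\mscr{B}_I}$ is equivalent to $\|\cdot\|_A$ with constants uniform in $I$ (one direction is Lemma~\ref{lem:universal-norm}, the other is Cauchy--Schwarz together with $\|I_\ell\|_A=1$ and \eqref{eq:quasi-banach}), it follows that $\|(x-y)^{\punto n}\|_A$ and $\|(x^c-y)^{\punto n}\|_A$ are comparable up to a constant $\Lambda$ independent of $n$. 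Feeding this into the displayed lower bound gives $\|(x-y)^{\punto n}\|_A\ge\frac{c_A}{1+\Lambda}\,\sigma_A(x,y)^n$, whence $\liminf_n(\|(x-y)^{\punto n}\|_A)^{1/n}\ge\sigma_A(x,y)$ and \eqref{eq:Abel} follows.

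Finally, \eqref{eq:estimate-sto} is softer, and I would derive it from the inequality $\|x-y\|_A\le\sigma_A(x,y)$ already established before the statement, together with \eqref{eq:sto-inequality} and \eqref{eq:delta-yx-2}. Applying \eqref{eq:sto-inequality} to both $w$ and $\overline w$ (note $\Delta_{\overline w}=\Delta_w$ and $|\im(\overline w)|=|\im(w)|=\|\im(y)\|_A$) bounds each of $r_1,r_2$, and hence $\sigma_A(x,y)$, by $\sqrt{\|\Delta_y(x)\|_A+\|\im(y)\|_A^2}+\|\im(y)\|_A$; subadditivity of the square root then gives $\sigma_A(x,y)\le\sto_A(x,y)+2\|\im(y)\|_A$. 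Chaining with $\|x-y\|_A\le\sigma_A(x,y)$ and $C_A(1+C_A)\ge1$ yields \eqref{eq:estimate-sto}.
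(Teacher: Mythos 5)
Your treatment of \eqref{eq:estimate-punto}, of the exact identity $\|(x-y)^{\punto n}\|_A=\sigma_A(x,y)^n$ on $\C_J$, and of part $(\mr{ii})$ is correct and follows essentially the same route as the paper, namely the representation formula $(x-y)^{\punto n}=\frac{1-IJ}{2}p+\frac{1+IJ}{2}q$ with $p=(z_J-y)^n$, $q=(z_J^c-y)^n$; your derivation of \eqref{eq:estimate-sto} from the inequality $\|x-y\|_A\le\sigma_A(x,y)$ of Section 2 is in fact slightly cleaner than the paper's. The gap is in the lower half of \eqref{eq:Abel}. Your ``step 1'', i.e.\ $\|(x-y)^{\punto n}\|_A+\|(x^c-y)^{\punto n}\|_A\ge c_A\,\sigma_A(x,y)^n$, is fine, but the claim on which ``step 2'' rests --- that $\|f(x)\|_{\mscr{B}_I}=\|f(x^c)\|_{\mscr{B}_I}$ --- is false. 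Writing $f(x)=F_1(z)+IF_2(z)$ and $f(x^c)=F_1(z)-IF_2(z)$ and passing to the complex coordinates $\hat F_{1,\ell},\hat F_{2,\ell}$ of $F_1,F_2$ w.r.t.\ $\mscr{B}_I$, the two squared coordinate norms are $\sum_\ell\,\bigl|\hat F_{1,\ell}\pm\ui\hat F_{2,\ell}\bigr|^2$, and these differ by cross terms $\pm 4\sum_\ell\re\bigl(\ui\hat F_{2,\ell}\overline{\hat F_{1,\ell}}\bigr)$ over which neither the stem relations nor the Cauchy--Riemann equations of Lemma \ref{lem:splitting} give any control (those equations constrain derivatives, not the relation between values at $z$ and at $\overline z$). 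Concretely, take $A=\q$, $y=j$, $n=2$, $x=I=(i+j)/\sqrt2$, so $z=\ui$, $F_1=-2$, $F_2=-2j$: then $\|F_1+IF_2\|_A=\sqrt{(2-\sqrt2)^2+2}\approx 1.53$ while $\|F_1-IF_2\|_A\approx 3.70$, and here $\|\cdot\|_{\mscr{B}_I}$ can be taken equal to the euclidean norm. So the alleged symmetry fails, and with it your proof of the comparability of the two values.

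The comparability you need is nevertheless true for $x\notin\C_J$, but it must be read off the representation formula itself rather than from a norm symmetry: if, say, $|z-w|<|z-\overline w|$, then both $(x-y)^{\punto n}$ and $(x^c-y)^{\punto n}$ equal $\frac{1\pm IJ}{2}(z_J^c-y)^n$ up to an exponentially smaller correction, and since $I\ne\pm J$ both elements $1\pm IJ$ are nonzero, so the lower bound in \eqref{eq:quasi-banach} (applicable because $(z_J^c-y)^n\in\Q_A$) yields two-sided bounds of order $|z-\overline w|^n$ for each. The paper implements exactly this idea in a form that bypasses $x^c$ entirely: it sets $Q:=(z_J-y)(z_J^c-y)^{-1}$, notes $\|Q^n\|_A=\|Q\|_A^n\to 0$, rewrites $(x-y)^{\punto n}=\bigl(\frac{IJ+1}{2}-\frac{IJ-1}{2}Q^n\bigr)(z_J^c-y)^n$, and sandwiches $\|(x-y)^{\punto n}\|_A$ between $c_AM_n^-|z-\overline w|^n$ and $C_AM_n^+|z-\overline w|^n$ with $(M_n^{\pm})^{1/n}\to 1$ because $\|IJ+1\|_A\ne 0$. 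Replacing your step 2 by this argument (or by the direct two-term estimate just described) closes the gap; the rest of your proof can stand as written.
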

\begin{proof}
Let $x=\alpha+I\beta$ and $y=\xi+J\eta$ for some $\alpha,\beta,\xi,\eta \in \R$ with $\beta,\eta>0$ and $I,J \in \cS_A$. Define $z:=\alpha+\ui\beta$, $w:=\xi+\ui\eta$ and $z_J:=\alpha+J\beta$. Let $n \in \N$. By applying the representation formula for slice functions to $(x-y)^{\punto n}$ (see Proposition~6 of \cite{GhPe_AIM}), we obtain:
\[
(x-y)^{\punto n}=\frac{1}{2}\big((z_J-y)^n+(z_J^c-y)^n\big)-\frac{1}{2} \, I\big(J(z_J-y)^n-J(z_J^c-y)^n\big)\,.
\]
We recall Artin's theorem for alternative algebras: the subalgebra generated by two elements is always associative.
Since $(z_J-y)^n$ and $(z_J^c-y)^n$ belongs to $\C_J$, Artin's theorem implies that 
\begin{center}
$I\big(J(z_J-y)^n\big)=(IJ)(z_J-y)^n$ and $I\big(J(z_J^c-y)^n\big)=(IJ)(z_J^c-y)^n$.
\end{center}
In this way, we have that
\begin{equation} \label{eq:punto-representation}
(x-y)^{\punto n}=\frac{1-IJ}{2}(z_J-y)^n+\frac{1+IJ}{2}(z_J^c-y)^n.
\end{equation}
Thanks to (\ref{eq:E}), we have that $\|z_J-y\|_A=|z-w|$ and $\|z_J^c-y\|_A=|\overline{z}-w|=|z-\overline{w}|$. By combining the latter equalities with (\ref{eq:quasi-banach}) and (\ref{eq:punto-representation}), we obtain that
\begin{align} \label{eq:useful}
\|(x-y)^{\punto n}\|_A \leq & \, C_A \left\|\frac{1-IJ}{2}\right\|_A|z-w|^n+C_A\left\|\frac{1+IJ}{2}\right\|_A|z-\overline{w}|^n \leq \nonumber\\
\leq & \, C_A\frac{1+C_A}2\big(|z-w|^n+|z-\overline{w}|^n\big)
\end{align}
and hence $\|(x-y)^{\punto n}\|_A \leq C_A(1+C_A) \, \sigma_A(x,y)^n$, as desired. This proves (\ref{eq:estimate-punto}).

Let us show (\ref{eq:Abel}). If $x \in \C_J$, then $\|(x-y)^{\punto n}\|_A=\sigma_A(x,y)^n$ and hence (\ref{eq:Abel}) holds. Let $x \not\in \C_J$. Suppose $|z-w|<|z-\overline{w}|$. Since $|z-\overline{w}| \neq 0$, $z_J^c-y$ belongs to $\C_J \setminus \{0\}$. In particular, it is invertible in $\C_J$ (and hence in $A$). Define $Q:=(z_J-y)(z_J^c-y)^{-1}$. Observe that $Q \in \C_J$, $\|Q\|_A=|z-w||z-\overline{w}|^{-1}<1$ and $\|Q^n\|_A=\|Q\|_A^n$ for each $n \in \N$. In particular, it holds:
\begin{equation} \label{eq:lim}
\lim_{n \rightarrow +\infty}\|Q^n\|_A=0.
\end{equation}
By using Artin's theorem again, we obtain that
\[
\big((IJ)Q^n\big)(z_J^c-y)^n=(IJ)\big(Q^n(z_J^c-y)^n\big)=(IJ)(z_J-y)^n
\]
and hence, thanks to (\ref{eq:punto-representation}), we have: 
\begin{equation} \label{eq:Artin}
(x-y)^{\punto n}=\left(\frac{IJ+1}{2}-\frac{IJ-1}{2} \,  Q^n\right)(z_J^c-y)^n.
\end{equation}
For each $n \in \N$, define $M_n^+,M_n^- \in \R^+$ as follows:
\[
M_n^\pm:=\frac{1}{2}\big|\|IJ+1\|_A\pm \|(IJ-1)Q^n\|_A\big|.
\]
Points (\ref{eq:quasi-banach})  and (\ref{eq:Artin}) ensures that
\[
c_A M_n^-|z-\overline{w}|^n \leq \|(x-y)^{\punto n}\|_A \leq C_AM_n^+|z-\overline{w}|^n.
\]
Since $I \neq J$, $\|IJ+1\|_A \neq 0$ and hence (\ref{eq:lim}) implies that $\lim_{n \rightarrow +\infty}(M_n^\pm)^{1/n}=1$. It follows that $\lim_{n \rightarrow +\infty}\|(x-y)^{\punto n}\|^{1/n}_A=|z-\overline{w}|=\sigma_A(x,y)$. In the case in which $|z-w| > |z-\overline{w}|$, the proof of (\ref{eq:Abel}) is similar.

It remains to prove $(\mr{ii})$. By (\ref{eq:sto-inequality}), it follows that
\[
\max\{|z-w|,|z-\overline{w}|\} \leq \sqrt{|\Delta_w(z)|}+2|\im(w)|=\sto_A(x,y)+2 \, \|\im(y)\|_A.
\]
By combining inequality (\ref{eq:useful}) with $n=1$ and the preceding one, we infer at once (\ref{eq:estimate-sto}).
\end{proof}

\begin{definition}
Given $y \in \Q_A$,  we define, following \cite{StoppatoAdvMath2012}, the family of slice regular polynomial functions $\{\stx_{y,n}:\Q_A \lra A\}_{n \in \N}$:
\[
\stx_{y,2m}(x):=\Delta_y(x)^m=\I(\Delta_y(z)^m),
\quad
\stx_{y,2m+1}(x):=\Delta_y(x)^m(x-y)=\I(\Delta_y(z)^m(z-y))
\]
for each $m \in \N$.  
\end{definition}

\begin{lemma} \label{lem:slice-norm-sto}
For each $x,y \in \Q_A$, it holds:
\begin{equation} \label{eq:Abel-sto}
\lim_{n \rightarrow +\infty}\|\stx_{y,n}(x)\|_A^{1/n}=\sto_A(x,y).
\end{equation}
\end{lemma}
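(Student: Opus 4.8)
The plan is to treat the even and odd indices separately, reducing everything to the scalar quantity $\|\Delta_y(x)\|_A$, which by (\ref{eq:delta-yx-2}) and (\ref{eq:sto-explicit}) satisfies $\sto_A(x,y)=\sqrt{\|\Delta_y(x)\|_A}$. The crucial structural fact I would isolate first is that $\Delta_y(x)$ lies in the quadratic cone: if $x\in\C_I$ then $\Delta_y(x)\in\C_I\subseteq\Q_A$. Consequently each power $\Delta_y(x)^m$ again lies in $\C_I$, and since the restriction of $\|\cdot\|_A$ to $\C_I\cong\C$ is the modulus transported via (\ref{eq:E}), the norm is \emph{exactly} multiplicative on these powers,
\[
\|\Delta_y(x)^m\|_A=\|\Delta_y(x)\|_A^m\qquad(m\in\N).
\]
To justify this cleanly I would work inside the associative, commutative subalgebra $\C_I$, where $n(\Delta_y(x)^m)=n(\Delta_y(x))^m$, and then apply $\|\cdot\|_A=\sqrt{n(\cdot)}$ on $\Q_A$.

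For the even index $n=2m$ this yields the exact identity
\[
\|\stx_{y,2m}(x)\|_A^{1/(2m)}=\big(\|\Delta_y(x)\|_A^m\big)^{1/(2m)}=\sqrt{\|\Delta_y(x)\|_A}=\sto_A(x,y),
\]
so the even subsequence is already equal to the desired limit for every $m\ge 1$. For the odd index $n=2m+1$ I would write $\stx_{y,2m+1}(x)=\Delta_y(x)^m(x-y)$ and sandwich its norm using the two-sided quasi-Banach estimate (\ref{eq:quasi-banach}); this is legitimate precisely because the left factor $\Delta_y(x)^m$ belongs to $\Q_A$, which is the hypothesis under which the lower bound in (\ref{eq:quasi-banach}) holds. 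Thus
\[
c_A\,\|\Delta_y(x)\|_A^m\,\|x-y\|_A\le\|\stx_{y,2m+1}(x)\|_A\le C_A\,\|\Delta_y(x)\|_A^m\,\|x-y\|_A.
\]
Taking $(2m+1)$-th roots and letting $m\to+\infty$, the factors $(c_A\|x-y\|_A)^{1/(2m+1)}$ and $(C_A\|x-y\|_A)^{1/(2m+1)}$ both tend to $1$, while $\|\Delta_y(x)\|_A^{m/(2m+1)}\to\|\Delta_y(x)\|_A^{1/2}$, forcing the odd subsequence to the same limit $\sto_A(x,y)$. Combining the two subsequences gives (\ref{eq:Abel-sto}).

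The main genuine obstacle is not the computation but the degenerate configurations, which I would dispatch at the outset. If $\Delta_y(x)=0$ — equivalently $\sto_A(x,y)=0$, a case that includes both $x=y$ and $x=y^c$ — then $\stx_{y,n}(x)=0$ for all $n\ge 2$, so both subsequences are identically $0$ for large $n$ and the limit is $0=\sto_A(x,y)$. When $\Delta_y(x)\ne 0$ the element $x$ cannot equal $y$ (since $\Delta_y(y)=0$), so $\|x-y\|_A>0$ and the odd-index estimate above applies verbatim with strictly positive factors. The only point requiring care is thus ensuring the quasi-Banach bound (\ref{eq:quasi-banach}) is invoked with the cone-valued factor in the correct slot, together with the routine verification that $m/(2m+1)\to 1/2$; no further case analysis is needed.
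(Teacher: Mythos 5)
Your proposal is correct and follows essentially the same route as the paper: both establish the exact multiplicativity $\|\Delta_y(x)^m\|_A=\|\Delta_y(x)\|_A^m$ (you via the commutative subalgebra $\C_I$ and multiplicativity of $n$, the paper via the explicit stem-function identity $\Delta_y(x)^m=\re(\Delta_w(z)^m)+I\,\im(\Delta_w(z)^m)$ together with \eqref{eq:E}), and both then sandwich the odd-index terms with \eqref{eq:quasi-banach} and take $(2m+1)$-th roots. Your separate treatment of the degenerate case $\Delta_y(x)=0$ is slightly more careful than the paper's dismissal of $x=y$ alone, but the main computation already covers it.
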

\begin{proof}
If $x=y$, then the statement is evident. Suppose $x \neq y$. Write $x$ and $y$ as follows: $x=\alpha+I\beta$ and $y=\xi+J\eta$ with $\alpha,\beta,\xi,\eta \in \R$ and $I,J \in \cS_A$. Define $z:=\alpha+\ui\beta$ and $w:=\xi+\ui\eta$. Let $m \in \N$. By (\ref{eq:delta-yx}), we have that
\[
\Delta_y(x)^m=\re(\Delta_w(z)^m)+I \cdot \im(\Delta_w(z)^m).
\]
Bearing in mind (\ref{eq:E}) and (\ref{eq:delta-yx-2}), we infer that
\[
\|\Delta_y(x)^m\|_A=\sqrt{{\left(\re(\Delta_w(z)^m)\right)}^2+{\left(\im(\Delta_w(z)^m)\right)}^2}=|\Delta_w(z)^m|=|\Delta_w(z)|^m=\|\Delta_y(x)\|_A^m.
\]
In particular, the limit $\lim_{m \rightarrow +\infty}(\|\stx_{y,2m}(x)\|_A\big)^{1/2m}$ is equal to $\sqrt{\|\Delta_y(x)\|_A}$, which in turn coincides with $\sto_A(x,y)$ by (\ref{eq:sto-explicit}). Moreover, by (\ref{eq:quasi-banach})  we know that
\[
c_A\|\Delta_y(x)\|_A^m\|x-y\|_A
\leq 
\|\stx_{y,2m+1}(x)\|_A
\leq
C_A\|\Delta_y(x)\|_A^m\|x-y\|_A
\]
and hence
\begin{align*}
\lim_{m \rightarrow +\infty}(\|\stx_{y,2m+1}(x)\|_A\big)^{1/(2m+1)}=& \, \lim_{m \rightarrow +\infty}\|\Delta_y(x)\|_A^{m/(2m+1)}\|x-y\|_A^{1/(2m+1)}=\\
=& \, \sqrt{\|\Delta_y(x)\|_A}=\sto_A(x,y).
\end{align*}
This completes the proof.
\end{proof}

\subsection{Abel Theorem}

If $S$ is a subset of $\Q_A$, then we denote by $\cl(S)$ the closure of $S$ in $\Q_A$.

\begin{theorem} \label{thm:radii}
Let $y \in \Q_A$, let $\{a_n\}_{n \in \N}$ be a sequence in $A$ and let $R \in \R^+ \cup \{+\infty\}$ defined by setting
\begin{equation*} 
\limsup_{n \rightarrow +\infty}\big(\|a_n\|_A\big)^{1/n}=\frac{1}{R}.
\end{equation*}
Then the following two statements hold:
\begin{itemize}
 \item[$(\mr{i})$] The power series $\mr{P}(x)=\sum_{n \in \N}(x-y)^{\punto n}a_n$ converges totally on compact subsets of $\Q_A$ contained in $\Sigma_A(y,R)$.
If  $\{a_n\}_{n\in\N}\subset \Q_A$, then $\mr{P}$ does not converge on $\Q_A \setminus \cl\big(\Sigma_A(y,R)\big)$. We call $R$ the \emph{$\sigma_A$--radius of convergence of $\mr{P}$}.
 \item[$(\mr{ii})$] The \emph{spherical series} 
$\mr{S}(x)=\sum_{n \in \N}\stx_{y,n}(x)a_n$ converges totally on compact subsets of $\Q_A$ contained in $\Sto_A(y,R)$.
If  $\{a_n\}_{n\in\N}\subset \Q_A$, then $\mr{S}$ does not converge on $\Q_A \setminus \cl\big(\Sto(y,R)\big)$. We call $R$ the \emph{$\sto_A$--radius of convergence of $\mr{S}$}.
\end{itemize}
\end{theorem}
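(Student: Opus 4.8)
The plan is to run the classical Abel argument on each of the two series, using the two--sided norm estimates already at our disposal: the upper bounds of Proposition~\ref{prop:estimates} together with the quasi--submultiplicativity \eqref{eq:quasi-banach} will give the convergence half, while the exact asymptotics \eqref{eq:Abel} and \eqref{eq:Abel-sto}, combined with the lower bound in \eqref{eq:quasi-banach}, will give the divergence half. Throughout, total convergence of a series $\sum_n g_n$ on a set $K$ means $\sum_{n\in\N}\sup_{x\in K}\|g_n(x)\|_A<+\infty$, so the root test applied to these suprema is the natural tool.

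For the convergence statement in $(\mr{i})$, let $K$ be a compact subset of $\Sigma_A(y,R)$. The function $x\mapsto\sigma_A(x,y)$ is $1$--Lipschitz, hence continuous, for the metric $\sigma_A$; since $K$ is $\sigma_A$--compact and contained in the open ball $\Sigma_A(y,R)$, it attains on $K$ a maximum $\rho:=\max_{x\in K}\sigma_A(x,y)<R$. Applying the second inequality of \eqref{eq:quasi-banach} and then \eqref{eq:estimate-punto}, for every $x\in K$ we get
\[
\|(x-y)^{\punto n}a_n\|_A\le C_A\,\|(x-y)^{\punto n}\|_A\,\|a_n\|_A\le C_A^2(1+C_A)\,\rho^{\,n}\,\|a_n\|_A.
\]
Since $\limsup_n\|a_n\|_A^{1/n}=1/R$ and $\rho<R$, the right--hand side has $n$--th root with $\limsup$ equal to $\rho/R<1$, so the root test yields $\sum_n\sup_{x\in K}\|(x-y)^{\punto n}a_n\|_A<+\infty$, which is total convergence on $K$.

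For the divergence statement in $(\mr{i})$, first note that, $\sigma_A(\,\cdot\,,y)$ being continuous for $\sigma_A$, one has $\cl(\Sigma_A(y,R))\subseteq\{x:\sigma_A(x,y)\le R\}$, while the description \eqref{eq:sigma-ball} shows conversely that every point with $\sigma_A(x,y)=R$ lies in $\cl(\Sigma_A(y,R))$; hence $\Q_A\setminus\cl(\Sigma_A(y,R))=\{x\in\Q_A:\sigma_A(x,y)>R\}$. Fix such an $x$ and assume $\{a_n\}\subset\Q_A$. Using $a_n\in\Q_A$ in the first inequality of \eqref{eq:quasi-banach} gives $\|(x-y)^{\punto n}a_n\|_A\ge c_A\,\|(x-y)^{\punto n}\|_A\,\|a_n\|_A$. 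By \eqref{eq:Abel} the factor $\|(x-y)^{\punto n}\|_A^{1/n}$ \emph{converges} to $\sigma_A(x,y)$, so
\[
\limsup_{n\to+\infty}\big(\|(x-y)^{\punto n}\|_A\,\|a_n\|_A\big)^{1/n}=\sigma_A(x,y)\cdot\tfrac1R>1,
\]
whence the general term does not tend to $0$ and $\mr{P}(x)$ diverges.

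Part $(\mr{ii})$ is proved along the same lines, replacing $\sigma_A$ by $\sto_A$ and \eqref{eq:Abel} by \eqref{eq:Abel-sto}, with two adjustments. First, by \eqref{eq:sto-explicit} the map $x\mapsto\sto_A(x,y)=\sqrt{\|\Delta_y(x)\|_A}$ is a composition of a polynomial with a norm, hence euclidean--continuous, so on a compact $K\subset\Sto_A(y,R)$ one again gets $\rho:=\max_K\sto_A(\,\cdot\,,y)<R$. Second, one must treat even and odd indices separately: from the computation in the proof of Lemma~\ref{lem:slice-norm-sto} one has $\|\stx_{y,2m}(x)\|_A=\sto_A(x,y)^{2m}$ exactly, while for the odd terms \eqref{eq:quasi-banach} and \eqref{eq:estimate-sto} give
\[
\|\stx_{y,2m+1}(x)\|_A\le C_A\,\sto_A(x,y)^{2m}\,\|x-y\|_A\le C_A^2(1+C_A)\,\sto_A(x,y)^{2m}\big(\sto_A(x,y)+2\|\im(y)\|_A\big).
\]
On $K$ the extra factor is bounded, so in both parities $\big(\sup_K\|\stx_{y,n}(x)a_n\|_A\big)^{1/n}$ has $\limsup$ at most $\rho/R<1$, giving total convergence; the divergence for $\sto_A(x,y)>R$ then follows exactly as above, from the lower bound in \eqref{eq:quasi-banach} and the limit in \eqref{eq:Abel-sto}. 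I expect the one genuinely delicate point to be the convergence half of $(\mr{i})$: since the $\sigma_A$--topology is strictly finer than the euclidean one and $\sigma_A(\,\cdot\,,y)$ is \emph{not} euclidean--continuous, a merely euclidean--compact subset of $\Sigma_A(y,R)$ need not satisfy $\sup_K\sigma_A(\,\cdot\,,y)<R$, so ``compact'' must be read in the $\sigma_A$--sense to make the estimate above uniform; the even/odd bookkeeping for the spherical series is the only other point needing care.
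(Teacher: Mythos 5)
Your proof is correct and follows essentially the same route as the paper's: the upper bound \eqref{eq:estimate-punto} (respectively the exact identity $\|\Delta_y(x)^m\|_A=\|\Delta_y(x)\|_A^m$ together with \eqref{eq:estimate-sto} for the odd spherical terms) combined with \eqref{eq:quasi-banach} and the root test gives total convergence, while the limits \eqref{eq:Abel} and \eqref{eq:Abel-sto} together with the lower bound in \eqref{eq:quasi-banach} give divergence outside the closed ball. Your closing caveat is well taken and is in fact the one point the paper glosses over: its proof sets $r:=\sup_{x\in L}\sigma_A(x,y)$ and asserts $r<R$ without argument, which requires $L$ to be compact for the $\sigma_A$--topology, since for a merely euclidean--compact $L$ one can have $r=R$ when $\cS_A$ is not discrete (take a sequence in $\OO(y,R)$ converging in the euclidean sense to a point of $B_J(y,R)$ whose associated complex coordinate lies on $\partial B(\overline{w},R)$).
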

\begin{proof}
Thanks to Proposition~\ref{prop:estimates}, the proof is quite standard. We will prove the theorem only in the case in which $R \in \R^+, R\ne0$. The proof in the remaining case $R \in \{0,+\infty\}$ is similar.

Let $L$ be a compact subset of $\Q_A$ contained in $\Sigma_A(y,R)$ and let $r:=\sup_{\, x \in L}\sigma_A(x,y)$. Then $r<R$.  
By combining (\ref{eq:quasi-banach}) and (\ref{eq:estimate-punto}) with the definition of $R$, we obtain that
\begin{align*}
\sum_{n \in \N}\sup_{\, x \in L}\left\|(x-y)^{\punto n}a_n\right\|_A \leq & \, C_A \sum_{n \in \N}\left(\sup_{\, x \in L}\|(x-y)^{\punto n}\|_A\right)\|a_n\|_A \leq \\
\leq & \, (C_A)^2(C_A+1)\sum_{n \in \N}r^n\|a_n\|_A<+\infty.
\end{align*}
It follows that $\mr{P}$ is totally convergent on compact subsets of $\Q_A$ contained in $\Sigma_A(y,R)$. Proceeding similarly, but using (\ref{eq:estimate-sto}) instead of (\ref{eq:estimate-punto}), we obtain also that $\mr{S}$ is totally convergent on compact subsets of $\Q_A$ contained in $\Sto_A(y,R)$.

Let now $x \in \Q_A \setminus \cl\big(\Sigma_A(y,R)\big)$ and let $s:=\sigma_A(x,y)>R$. By (\ref{eq:quasi-banach}) and (\ref{eq:Abel}), we have that
\[
\limsup_{n \rightarrow +\infty}\big(\|(x-y)^{\punto n}a_n\|_A\big)^{1/n} \geq \limsup_{n \rightarrow +\infty}\big(c_A\|(x-y)^{\punto n}\|_A\|a_n\|_A\big)^{1/n}=\frac{s}{R}>1
\]
and hence $\mr{P}(x)$ does not converge. Proceeding similarly, but using (\ref{eq:Abel-sto}) instead of (\ref{eq:Abel}), we obtain also that $\mr{S}(x)$ does not converge for each $x \in \Q_A \setminus \cl(\Sto_A(y,R))$.
\end{proof}

 In the quaternionic case, the previous result was proved in \cite{GeSto2012MathAnn} and \cite{StoppatoAdvMath2012}. If $A$ is the Clifford algebra $\R_n$, then the quadratic cone contains the space $\R^{n+1}$ of paravectors. We then obtain the radii of convergence of power and spherical series with paravector coefficients. If not all of the coefficients $a_n$ belong to the quadratic cone, then the divergence of the series centered at $y\in\C_J$ is assured only at points $x\in  \C_J \setminus \cl\big(\Sigma_A(y,R)\big)$.

\begin{corollary}
Let $y \in \Q_A$, let $\{a_n\}_{n \in \N}$ be a sequence in $A$ and let $R \in \R^+$ defined by setting
\begin{equation*} 
\limsup_{n \rightarrow +\infty}\big(\|a_n\|_A\big)^{1/n}=\frac{1}{R}.
\end{equation*}
Assume that $R>0$. Then the following two statements hold:
\begin{itemize}
 \item[$(\mr{i})$] If $\OO(y,R) \neq \emptyset$, then the power series $\mr{P}(x)=\sum_{n \in \N}(x-y)^{\punto n}a_n$ defines a slice regular function on $\OO(y,R)$.
 \item[$(\mr{ii})$] The spherical series $\mr{S}=\sum_{n \in \N}\stx_{y,n}a_n$ defines a slice regular function on $\Sto_A(y,R)$.
\end{itemize}
\end{corollary}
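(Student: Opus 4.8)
The plan is to exhibit each series as the locally uniform limit of its slice regular partial sums and then to invoke Remark~\ref{sequence}, which guarantees that such a limit is again slice regular. All the analytic work has in fact been done: Theorem~\ref{thm:radii} tells us precisely on which sets the partial sums converge totally, and the stability result of Remark~\ref{sequence} converts locally uniform convergence of slice regular functions into slice regularity of the limit. The only points that require attention are that the partial sums are slice regular and that the convergence domains are genuine circular open sets $\OO_D$, so that Remark~\ref{sequence} is applicable.

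First I would treat part~$(\mr{ii})$. Each partial sum $\sum_{n=0}^N\stx_{y,n}(x)a_n$ is a finite right-linear combination of the slice regular polynomial functions $\stx_{y,n}$, and is therefore itself slice regular on $\Q_A$: at the level of stem functions, right multiplication by the constant $a_n\in A$ leaves the inducing polynomial stem function holomorphic. Next I would identify the domain as a circular open set. Since $\Sto_A(y,R)$ is the circularization of $\Sto(w,R)=\{z\in\C\mid|\Delta_w(z)|<R^2\}$ (with $w=\xi+\ui\eta$ for $y=\xi+J\eta$), and since $\Delta_w(\overline z)=\overline{\Delta_w(z)}$ forces $\Sto(w,R)$ to be open and conjugation-invariant, we have $\Sto_A(y,R)=\OO_D$ with $D=\Sto(w,R)$. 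By Theorem~\ref{thm:radii}$(\mr{ii})$ the partial sums converge to $\mr{S}$ totally, hence uniformly, on compact subsets of $\Sto_A(y,R)$, and Remark~\ref{sequence} then yields that $\mr{S}$ is slice regular there.

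Part~$(\mr{i})$ follows the same scheme, the partial sums $\sum_{n=0}^N(x-y)^{\punto n}a_n$ being slice regular polynomials on $\Q_A$, as noted earlier for polynomials of this form. The one genuine subtlety is that one must not try to prove regularity on the whole $\sigma_A$-ball: by \eqref{eq:sigma-ball} we have $\Sigma_A(y,R)=B_J(y,R)\cup\OO(y,R)$, and the slice disk $B_J(y,R)$ may have empty euclidean interior, so $\Sigma_A(y,R)$ need not be open. Restricting instead to the piece $\OO(y,R)$, which by \eqref{eq:OO-ball} is the circularization of the open conjugation-invariant set $D=B(w,R)\cap B(\overline w,R)$ and hence equals $\OO_D$, the hypothesis $\OO(y,R)\neq\emptyset$ makes this a nonempty circular open domain with $\OO(y,R)\subseteq\Sigma_A(y,R)$. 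Theorem~\ref{thm:radii}$(\mr{i})$ gives uniform convergence of the partial sums on compact subsets of $\Sigma_A(y,R)$, a fortiori on compact subsets of $\OO(y,R)$, and a final application of Remark~\ref{sequence} produces slice regularity of $\mr{P}$ on $\OO(y,R)$.

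I expect the main obstacle to be purely bookkeeping rather than analytic: correctly recognizing $\OO(y,R)$ and $\Sto_A(y,R)$ as circular open domains of the form $\OO_D$ on which Remark~\ref{sequence} is available, and in part~$(\mr{i})$ keeping track of the fact that the open set $\OO(y,R)$ --- rather than the possibly non-open $\Sigma_A(y,R)$ --- is the correct domain on which slice regularity can be asserted. Once this is in place, both statements reduce to the standard principle that a locally uniform limit of slice regular functions is slice regular.
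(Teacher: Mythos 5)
Your proposal is correct and follows essentially the same route as the paper: the partial sums are slice regular polynomials, Theorem~\ref{thm:radii} gives their uniform convergence on compact subsets of $\OO(y,R)$ (resp.\ $\Sto_A(y,R)$), and Remark~\ref{sequence} then yields slice regularity of the limit. Your extra care in verifying that these sets are circular open domains of the form $\OO_D$, and in restricting to $\OO(y,R)$ rather than the possibly non-open $\Sigma_A(y,R)$, is exactly the bookkeeping the paper leaves implicit.
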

\begin{proof}
Let $y=\xi+J\eta$ with $\xi,\eta \in \R$ and $J \in \cS_A$. Let $w=\xi+i\eta$. The power series $\mr{P}(x)$ is the uniform limit on the open set $\OO(y,R)$ of the sequence of slice regular polynomials induced by the stem functions $\sum_{n=0}^N(z-y)^{n}a_n$, $N\in\N$. Similarly, the spherical series $\mr{S}(x)$ is the uniform limit of slice regular polynomials. The thesis is a consequence of Remark~\ref{sequence}.
\end{proof}

\subsection{Coefficients of power and spherical series} 
In the next result, 
we will show how to compute the coefficients of a power series by means of the Cullen derivatives of the function defined by the series itself. The result is similar to the one valid in the complex variable case. We consider only series centered at points $y\in\Q_A\setminus\R$, since if $y$ is real, then a power series w.r.t.\ the slice product coincides with a standard power series.

\begin{proposition} \label{thm:taylor}
Let $y \in \Q_A \setminus \R$ and let $\mr{P}:\Sigma_A(y,R) \lra A$ be a function defined by a power series $\mr{P}(x)=\sum_{n \in \N}(x-y)^{\punto n}a_n$ centered at $y$ with positive $\sigma_A$--radius of convergence. 
Then
 it holds, for each $n \in \N$,
\[
a_n=\frac{1}{n!} \, \partial_C^n \mr{P}(y).
\]
\end{proposition}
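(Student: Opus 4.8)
I want to show that the right coefficients $a_n$ of a power series $\mr{P}(x)=\sum_{n}(x-y)^{\punto n}a_n$ are recovered by iterated Cullen derivatives at the center $y$, exactly as in the classical complex case. The natural strategy is to pass to a slice $\C_J$ through $y$ by choosing $J\in\cS_A$ with $y\in\C_J$, and to exploit the fact that, along this slice, the slice function $\mr{P}$ restricts to a genuine holomorphic ($\C_J$-valued after splitting) function of one complex variable. By Lemma~\ref{lem:cullen}, the Cullen derivative $\partial_C\mr{P}=\partial\mr{P}/\partial x$ evaluated at $\Phi_J(w)$ equals the ordinary complex derivative $\partial\mr{P}_J/\partial z$ of the restriction $\mr{P}_J$, so the iterated Cullen derivatives at $y$ are nothing but the Taylor coefficients (up to the factor $n!$) of $\mr{P}_J$ as a one-variable holomorphic function.

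First I would fix $J\in\cS_A$ with $y=\xi+J\eta\in\C_J$ and restrict the series to the slice $\C_J$. Using the representation~\eqref{eq:punto-representation}, on $\C_J$ the slice power $(x-y)^{\punto n}$ collapses: when $x=z_J=\alpha+J\beta\in\C_J$ we have $I=J$, so $\frac{1-IJ}{2}=0$ and $\frac{1+IJ}{2}=1$, giving $(z_J-y)^{\punto n}=(z_J-y)^n$, the ordinary power in the commutative field $\C_J\simeq\C$. Hence on $B_J(y,R)$ the series becomes $\mr{P}_J(z_J)=\sum_n (z_J-y)^n a_n$, a bona fide $A$-valued power series in the single complex variable $z_J\in\C_J$, with $\|z_J-y\|_A=|z-w|<R$ guaranteeing convergence by Theorem~\ref{thm:radii}. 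Writing $\mr{P}_J$ in a splitting base via Lemma~\ref{lem:splitting} shows its $\C_J$-components are holomorphic, so the classical theory of term-by-term differentiation of convergent power series over a complete normed field applies, yielding $\frac{1}{n!}\frac{\partial^n \mr{P}_J}{\partial z^n}(w)=a_n$, where $w=\xi+\ui\eta$ and $z$ is the complex coordinate along $\C_J$.

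Next I would translate this back. By the iterated form~\eqref{eq:cullen^n} of Lemma~\ref{lem:cullen}, $\frac{\partial^n \mr{P}}{\partial x^n}(\Phi_J(w))=\frac{\partial^n \mr{P}_J}{\partial z^n}(w)$, and since $y\notin\R$ the slice derivative $\partial\mr{P}/\partial x$ coincides with the Cullen derivative $\partial_C\mr{P}$ on $\OO_D\setminus\R$. Therefore $\partial_C^n\mr{P}(y)=\frac{\partial^n \mr{P}_J}{\partial z^n}(w)=n!\,a_n$, which is the claimed formula. One mild point to verify is that $\mr{P}$ is genuinely of class $\mscr{C}^\infty$ near $y$, so that Lemma~\ref{lem:cullen} is applicable: this follows because, by Theorem~\ref{thm:radii}, $\mr{P}$ is the locally uniform limit of slice regular polynomials together with all their slice derivatives on the open set $\OO(y,R)$ (or on $B_J(y,R)$ along the slice), so differentiation under the series is legitimate.

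**Main obstacle.** The only delicate step is the differentiation-under-the-series argument, i.e.\ justifying that the Cullen derivatives commute with the infinite sum. The subtlety is that the $\sigma_A$-ball $\Sigma_A(y,R)$ may have empty euclidean interior when $y\notin\R$, so one cannot naively differentiate on an open euclidean neighborhood in $\Q_A$; the honest route is to differentiate the restricted one-variable series $\mr{P}_J$ on the genuinely open disk $B_J(y,R)\subset\C_J$, where the classical Weierstrass theorem on locally uniform convergence of holomorphic functions and their derivatives (here applied componentwise via the splitting base) guarantees term-by-term differentiability, and only then transport the result back through Lemma~\ref{lem:cullen}. Everything else is a routine bookkeeping of the identifications $\C_J\simeq\C$ and $z\leftrightarrow z_J$.
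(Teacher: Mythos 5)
Your proof is correct and follows essentially the same route as the paper's (which differentiates the stem function $\sum_n(z-y)^na_n$ term by term and uses $\partial_C=\partial/\partial x$), just carried out more explicitly on the slice $\C_J$; your care in differentiating the one-variable series on the genuinely open disk $B_J(y,R)$ is exactly the right way to handle the case $\OO(y,R)=\emptyset$, where the paper is terser. One small slip: when $I=J$ one has $IJ=-1$, so $\tfrac{1-IJ}{2}=1$ and $\tfrac{1+IJ}{2}=0$ (you swapped the two values), but your stated conclusion $(z_J-y)^{\punto n}=(z_J-y)^n$ is the correct one.
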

\begin{proof}
Since $\mr{P}(x)=\lim_{N\rightarrow\infty}\I(\sum_{n=0}^N(z-y)^{n}a_n)$ and $\partial_C f(y)=\dd fx(y)$ for every slice function $f$, the result follows from $\dd {}z(z-y)^n=n(z-y)^{n-1}$ for all $n\in\N$.
\end{proof}

In order to compute the coefficients of a spherical series in terms of the slice regular function defined by the spherical series itself, we need some preparations.
For each $t \in \R^+$, let $\lfloor t \rfloor:=\max\{n \in \N \, | \, n \leq t\}$. 
For each $n,\ell \in \N$, we define $e_{n\ell} \in \N$ by setting
\begin{equation*} 
e_{n\ell}:=
\left\{
 \begin{array}{ll}
  {k \choose m-k} & \mbox{ if } \; (n,\ell)=(2m,2k) \mbox{ or }(n,\ell)=(2m+1,2k) \vspace{.3em}\\
  {k \choose m-k-1} & \mbox{ if } \; (n,\ell)=(2m,2k+1) \vspace{.3em}\\
  -{k+1 \choose m-k} & \mbox{ if } \; (n,\ell)=(2m+1,2k+1),
 \end{array}
\right.
\end{equation*}
where ${a \choose b}=0$ if $b<0$ or $b>a$.
It is immediate to verify that
\begin{equation} \label{eq:range}
e_{n\ell}=0 \, \mbox{ if } \, \ell<\lfloor n/2 \rfloor \, \mbox{ or } \, \ell>n, \, \mbox{ and } \, e_{nn}=(-1)^n \, \mbox{ for each } \, n \in \N. 
\end{equation}
In particular, $\mk{e}=(e_{n\ell})_{n,\ell \in \N}$ is an infinite lower triangular matrix, which diagonal coefficients equal to $1$. We call $\mk{e}$ \textit{spherical matrix}. The structure of $\mk{e}$ is easy to visualize:
\[
\textstyle
\mk{e}=
\left(
\begin{array}{cccccccc}
1      & 0      & 0      & 0      & 0      & 0 & 0 & \cdots \\
e_{10} & -1      & 0      & 0      & 0      & 0 & 0 & \cdots \\
0      & e_{21} & 1      & 0      & 0      & 0 & 0 & \cdots \\
0      & e_{31} & e_{32} &-1      & 0      & 0 & 0 & \cdots \\
0      & 0      & e_{42} & e_{43} & 1      & 0 & 0 & \cdots \\
0      & 0      & e_{52} & e_{53} & e_{54} &-1 & 0 & \cdots 
\\
0      & 0      & 0 & e_{63} & e_{64} & e_{65} & 1 & \cdots \\
\vdots & \vdots & \vdots & \vdots & \vdots & \vdots & \vdots & \ddots 
\end{array}
\right)
\]
Given $n \in \N$ and $E=(E_0,E_1,\ldots,E_n) \in A^{n+1}$, we denote by $\mk{e}_n$ the $(n+1) \times n$ matrix obtained by extracting the first $n+1$ rows and the first $n$ columns from $\mk{e}$, and by $(\mk{e}_n|E)$ the $(n+1) \times (n+1)$ matrix, whose first $n$ columns are the columns of $\mk{e}_n$ and whose last column is equal to $E$. 
The reader observes that, since the $e_{n\ell}$'s are integers, the determinant $\det (\mk{e}_n|E)$ of $(\mk{e}_n|E)$ can be defined in the usual way, without ambiguity.

In the next result, 
we will show how to compute the coefficients of a spherical series by means of the slice derivatives of the sum of the series itself. Observe that if $y$ is real, a spherical series coincides with a standard power series.

\begin{theorem} \label{thm:coefficients}
Let $y \in \Q_A \setminus \R$ and let $\mr{S}:\Sto_A(y,R) \lra A$ be a slice regular function defined by a spherical series $\mr{S}(x)=\sum_{n \in \N}\stx_{y,n}(x)s_n$ centered at $y$ with positive $\sto_A$--radius of convergence $R$. 
The following assertions hold.
\begin{itemize}
 \item[$(\mr{i})$] For each $n \in \N$, define $E_n \in A$ by setting
 \begin{equation} \label{eq:E_n}
 E_n:=
\left\{
 \begin{array}{ll}
  \frac{1}{m!} \, (2\im(y))^m \frac{\partial^m}{\partial x^m}\mr{S}(y) & \mbox{ if } \; n=2m \vspace{.3em}\\
  \frac{1}{m!} \, (-2\im(y))^m \frac{\partial^m}{\partial x^m}\mr{S}(y^c) & \mbox{ if } \; n=2m+1.
 \end{array}
\right.
 \end{equation}
Denote by $\mk{E}$ and $\mk{s}$ the infinite vectors $(E_n)_{n \in \N}$ and $\big((2\im(y))^ns_n\big)_{n \in \N}$ in $A^{\N}$, respectively. Then $\mk{s}$ is the unique solution of the following infinite lower triangular linear system:
 \begin{equation} \label{eq:system}
 \mk{e} \cdot \mk{s}=\mk{E}.
 \end{equation}
 In other words, for each $n \in \N$, it holds:
 \begin{equation} \label{eq:system-bis}
 s_n=(-2\im(y))^{-n}\det(\mk{e}_n|\mk{E}_n),
 \end{equation}
where $\mk{E}_n$ is the column vector $(E_0,E_1,\ldots,E_n)$. In particular, the coefficients $\{s_n\}_n$ of $\mr{S}$ are uniquely determined 
by the derivatives $\{(\partial^n \mr{S}/\partial x^n)(y)\}_n$ and $\{(\partial^n \mr{S}/\partial x^n)(y^c)\}_n$.
  \item[$(\mr{ii})$] For each $n \in \N$, we have:
 \begin{equation} \label{eq:taylor-bis}
 \frac{1}{n!} \, \frac{\partial^n}{\partial x^n}\mr{S}(y)=(2\im(y))^{-n}\sum_{\ell=n}^{2n}e_{2n,\ell}(2\im(y))^{\ell}s_{\ell}.
 \end{equation}
\end{itemize}
\end{theorem}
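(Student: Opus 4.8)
The plan is to push everything down to the stem--function level, where $\stx_{y,\ell}$ becomes an ordinary complex polynomial, and then to match explicit one--variable derivatives against the binomial entries of $\mk{e}$.

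\emph{Reduction to complex derivatives.} Write $y=\xi+J\eta$ with $\eta>0$, put $w:=\xi+\ui\eta\in D$ and $\delta:=w-\overline w$, so that $\Phi_J(\delta)=2\im(y)\in\C_J$ and, since $J^c=-J$, also $y^c=\Phi_J(\overline w)$. Let $\psi_\ell$ denote the holomorphic stem polynomial inducing $\stx_{y,\ell}$, namely $\psi_{2k}(z)=(z-w)^k(z-\overline w)^k$ and $\psi_{2k+1}(z)=(z-w)^{k+1}(z-\overline w)^k$, so that $\mr{S}_J(z)=\mr{S}(\Phi_J(z))=\sum_\ell\Phi_J(\psi_\ell(z))\,s_\ell$. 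By Theorem~\ref{thm:radii} this series converges totally on compact subsets, hence may be differentiated term by term; since $\Phi_J:\C\to\C_J$ is an $\R$--algebra isomorphism intertwining the ordinary complex derivative with the $J$--holomorphic one, Lemma~\ref{lem:cullen} (formula~(\ref{eq:cullen^n})) yields the finite sums
\[
\textstyle
\dd{^m\mr{S}}{x^m}(y)=\sum_\ell\Phi_J\big(\psi_\ell^{(m)}(w)\big)\,s_\ell,
\qquad
\dd{^m\mr{S}}{x^m}(y^c)=\sum_\ell\Phi_J\big(\psi_\ell^{(m)}(\overline w)\big)\,s_\ell,
\]
the sums being finite because $\psi_\ell^{(m)}$ vanishes at $w$ and at $\overline w$ for all but boundedly many $\ell$.

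\emph{The key combinatorial identity.} Inserting these expressions into the definition~(\ref{eq:E_n}) of $E_n$ and using that $2\im(y)=\Phi_J(\delta)$ lies in $\C_J$, so that by Artin's theorem it associates with each $s_\ell$, the whole of assertion~(\ref{eq:system}) collapses to the two scalar identities in $\C$
\[
\tfrac{1}{m!}\,\delta^m\psi_\ell^{(m)}(w)=e_{2m,\ell}\,\delta^\ell,
\qquad
\tfrac{1}{m!}\,(-\delta)^m\psi_\ell^{(m)}(\overline w)=e_{2m+1,\ell}\,\delta^\ell,
\]
to be verified for all $m,\ell\in\N$. This is the crux of the argument and the step I expect to be the main obstacle. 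It is proved by the Leibniz rule applied to the factorizations $\psi_\ell=(z-w)^a(z-\overline w)^b$: since $\tfrac{d^j}{dz^j}(z-w)^a|_{w}$ survives only for $j=a$ and $\tfrac{d^{m-a}}{dz^{m-a}}(z-\overline w)^b|_{w}=\tfrac{b!}{(b-m+a)!}\,\delta^{\,b-m+a}$, running through the four cases $\ell\in\{2k,2k+1\}$ and base point in $\{w,\overline w\}$ produces precisely the coefficients $\binom{k}{m-k}$, $\binom{k}{m-k-1}$, $\binom{k}{m-k}$ and $-\binom{k+1}{m-k}$ that define $e_{n\ell}$, while the vanishing ranges of the derivatives reproduce the support condition~(\ref{eq:range}).

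\emph{Conclusion.} Summing the identities against $s_\ell$ gives $E_n=\sum_\ell e_{n\ell}(2\im(y))^\ell s_\ell=\sum_\ell e_{n\ell}\mk{s}_\ell$, which is exactly~(\ref{eq:system}). As $\mk{e}$ is lower triangular with invertible diagonal $e_{nn}=(-1)^n$, the system has a unique solution $\mk{s}$; applying Cramer's rule to its $(n+1)\times(n+1)$ truncation---legitimate because the off--diagonal entries are integers and hence central in $A$---yields the closed form~(\ref{eq:system-bis}), and in particular exhibits $s_n$ as a function of the derivatives of $\mr{S}$ at $y$ and $y^c$ alone. Finally, part~(ii) is the even--index case read backwards: taking $m=n$ in the first identity and dividing by $\delta^n$ gives $\tfrac{1}{n!}\psi_\ell^{(n)}(w)=e_{2n,\ell}\,\delta^{\ell-n}$; applying $\Phi_J$, multiplying by $s_\ell$, summing over $\ell$, and discarding the terms with $e_{2n,\ell}=0$ (those with $\ell<n$ or $\ell>2n$, by~(\ref{eq:range})) produces exactly~(\ref{eq:taylor-bis}).
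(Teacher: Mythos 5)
Your proof is correct, and it reorganizes the paper's argument in a genuinely leaner way. The computational heart is identical: your ``key combinatorial identity'' is exactly Lemma~\ref{lem:partial} of the paper (with $2\im(w)=\delta$), proved by the same Leibniz-rule case analysis, and the support condition \eqref{eq:range} plays the same role. Where you diverge is in the reduction from $A$ to $\C$. The paper decomposes each coefficient $s_n=\sum_k\Phi_J(\hat s_{n,k})J_k$ in a splitting base, proves a full complex version of the theorem (Lemma~\ref{lem:second}) for each of the $h+1$ scalar series $\hat{\mr{S}}_k$, and then reassembles via Artin's theorem. You skip both the splitting base and the intermediate complex lemma by keeping $s_\ell$ in $A$ and observing that all the factors you need to move past $s_\ell$ --- namely $\Phi_J(\psi_\ell^{(m)}(w))$, $\Phi_J(\psi_\ell^{(m)}(\overline w))$ and powers of $2\im(y)$ --- live in $\C_J$, so Artin's theorem (applied to the subalgebra generated by $J$ and $s_\ell$) lets you pull $\Phi_J$ through the whole computation at once. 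This buys a shorter proof with one fewer layer; the only place the splitting machinery implicitly survives is in your appeal to term-by-term differentiation of the totally convergent series $\mr{S}_J(z)=\sum_\ell\Phi_J(\psi_\ell(z))s_\ell$, which in a fully detailed writeup is justified componentwise via Lemma~\ref{lem:splitting} (or by a Cauchy-estimate argument on $\C_J$); since that is a standard Weierstrass-type fact the paper also relies on, it is not a gap. Your handling of uniqueness, Cramer's rule (legitimate because the $e_{n\ell}$ are integers, hence central), and the derivation of \eqref{eq:taylor-bis} from the $m=n$ case of the even identity all match the paper's conclusions.
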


In order to prove this result, we need two preliminary lemmas.
For a fixed $y \in \C \setminus \R$ and $z\in\C$, let $\stx_{y,2m}(z):=\Delta_y(z)^m$ and $\stx_{y,2m+1}(z):=\Delta_y(z)^m(z-y)$.

\begin{lemma} \label{lem:partial}
For each $n,\ell \in \N$, it holds:
\begin{equation} \label{eq:uno}
\frac{\partial^n}{\partial z^n}\stx_{y,\ell}(y)=n! \, e_{2n,\ell} (2\im(y))^{\ell-n}
\end{equation}
and
\begin{equation} \label{eq:due}
\frac{\partial^n}{\partial z^n}\stx_{y,\ell}(\overline{y})=n! \, (-1)^\ell e_{2n+1,\ell} (-2\im(y))^{\ell-n}
\end{equation}
\end{lemma}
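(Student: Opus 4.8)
The plan is to reduce both identities to extracting a single Taylor coefficient of a product of two affine factors, exploiting that $\partial/\partial z$ is translation invariant and that $\stx_{y,\ell}$ factors through $\Delta_y(z)=(z-y)(z-\overline y)$. Throughout I set $\lambda:=2\im(y)=y-\overline y$, which is nonzero because $y\notin\R$, and I recall that for a holomorphic polynomial $g$ the quantity $\frac{1}{n!}\frac{\partial^n}{\partial z^n}g(z_0)$ is exactly the coefficient of $(z-z_0)^n$ in the expansion of $g$ about $z_0$. This turns the lemma into two coefficient computations, one centered at $y$ and one at $\overline y$.

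First I would prove \eqref{eq:uno}. Setting $u:=z-y$, so that $z-\overline y=u+\lambda$, the two families become
\[
\stx_{y,2k}(z)=u^{k}(u+\lambda)^{k},\qquad \stx_{y,2k+1}(z)=u^{k+1}(u+\lambda)^{k}.
\]
Since $z=y$ corresponds to $u=0$ and $\partial/\partial z=\partial/\partial u$, it suffices to read off the coefficient of $u^n$. A single binomial expansion of $(u+\lambda)^k$ gives this coefficient as $\binom{k}{n-k}\lambda^{2k-n}$ when $\ell=2k$ and as $\binom{k}{n-k-1}\lambda^{2k+1-n}$ when $\ell=2k+1$; comparing with the rows of the spherical matrix having even first index (the cases $(2m,\cdot)$ with $m=n$) identifies these binomials with $e_{2n,2k}$ and $e_{2n,2k+1}$, and multiplying by $n!$ yields \eqref{eq:uno}.

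For \eqref{eq:due} I would repeat the computation symmetrically about $\overline y$. Setting $v:=z-\overline y$, so that $z-y=v-\lambda$, one gets
\[
\stx_{y,2k}(z)=v^{k}(v-\lambda)^{k},\qquad \stx_{y,2k+1}(z)=v^{k}(v-\lambda)^{k+1}.
\]
Now $z=\overline y$ corresponds to $v=0$, and the binomial expansion produces powers of $-\lambda=-2\im(y)$: the coefficient of $v^n$ is $\binom{k}{n-k}(-\lambda)^{2k-n}$ for $\ell=2k$ and $\binom{k+1}{n-k}(-\lambda)^{2k+1-n}$ for $\ell=2k+1$. Matching these against the odd-first-index entries $e_{2n+1,2k}=\binom{k}{n-k}$ and $e_{2n+1,2k+1}=-\binom{k+1}{n-k}$, and folding the sign $(-1)^\ell$ into the comparison, gives \eqref{eq:due}.

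No genuine analytic difficulty arises; the work is entirely combinatorial bookkeeping, and the step I expect to demand the most care is the final identification of each extracted coefficient with the correct branch of the piecewise definition of $e_{n\ell}$. One must track simultaneously the integer shift inside the binomial coefficient, the exponent of $2\im(y)$, and the sign contributed by $(-1)^\ell$ together with the minus sign appearing only in the $(2m+1,2k+1)$ branch; the asymmetry between the even- and odd-first-index rows is where an error is most likely. A convenient byproduct is that the conventions $\binom{a}{b}=0$ for $b<0$ or $b>a$ make the coefficient extraction automatically vanish outside the support recorded in \eqref{eq:range}, so the admissible ranges of $n$ and $\ell$ need no separate discussion.
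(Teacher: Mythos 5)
Your proof is correct: all four branch computations (even/odd $\ell$, centered at $y$ and at $\overline y$) check out against the definition of $e_{n\ell}$, including the sign bookkeeping in the $(2n+1,2k+1)$ case. The approach is essentially the paper's — the paper extracts the same Taylor coefficients of the factored polynomial $(z-y)^a(z-\overline y)^b$ via the Leibniz rule and, for odd $\ell$, the product rule, which amounts to the same single surviving binomial term you obtain by expanding in the shifted variable.
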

\begin{proof}
First, suppose $\ell=2k$ for some $k \in \N$. Since $\stx_{y,2k}(z)=(z-y)^k(z-\overline{y})^k$, it holds:
\begin{align*}
& \frac{\partial^n}{\partial z^n}\stx_{y,\ell}(z)=
\sum_{m=0}^n{n \choose m}\frac{\partial^m}{\partial z^m}\big((z-y)^k\big) \, \frac{\partial^{n-m}}{\partial z^{n-m}}\big((z-\overline{y})^k\big)=\\
& =
\sum_{m=0}^n{n \choose m}k(k-1)\cdots(k-m+1)(z-y)^{k-m}k(k-1)\cdots(k-n+m+1)(z-\overline{y})^{k-n+m}.
\end{align*}
It follows that, if $(\partial^n \stx_{y,\ell}/\partial z^n)(y) \neq 0$, then $k \leq n$ and $k-n+k \geq 0$; that is, $k \leq n \leq 2k$. Furthermore, if $k \leq n \leq 2k$, then
\[
\frac{\partial^n}{\partial z^n}\stx_{y,\ell}(y)={n \choose k}k! \cdot k(k-1)\cdots(2k-n+1)(y-\overline{y})^{2k-n}=n! \, {k \choose n-k}(2\im(y))^{2k-n}.
\]
This proves (\ref{eq:uno}) if $\ell$ is even. Similarly, one shows (\ref{eq:due}) if $\ell$ is even.

Let now $\ell=2k+1$ for some $k \in \N$. Since $\stx_{y,2k+1}(z)=(z-y)\stx_{y,2k}(z)$, we have:
\[
\frac{\partial^n}{\partial z^n}\stx_{y,2k+1}(z)=(z-y) \, \frac{\partial^n}{\partial z^n}\stx_{y,2k}(z)+n \, \frac{\partial^{n-1}}{\partial z^{n-1}}\stx_{y,2k}(z).
\]
By combining the latter fact with the preceding part of the proof, we obtain:
\[
\frac{\partial^n}{\partial z^n}\stx_{y,2k+1}(y)=n! \, {k \choose n-k-1}(2\im(y))^{2k+1-n}\quad \text{and}
\]
\begin{align*}
\frac{\partial^n}{\partial z^n}\stx_{y,2k+1}(\overline{y})&= (\overline{y}-y) \, n! \, {k \choose n-k}(-2\im(y))^{2k-n}+\\
&+n! \, {k \choose n-k-1}(-2\im(y))^{2k+1-n}=
n! \, {k+1 \choose n-k}(-2\im(y))^{2k+1-n},
\end{align*}
as desired.
\end{proof}

The next lemma is a complex version of Theorem \ref{thm:coefficients}.

\begin{lemma} \label{lem:second}
Let $y \in \C \setminus \R$ and let $\mr{S}:\Sto(y,R) \lra \C$ be a holomorphic function defined by a complex spherical series $\mr{S}(z)=\sum_{n \in \N}\stx_{y,n}(z)s_n$ centered at $y$ with positive $\sto$--radius of convergence $R$. 
The following assertions hold.
\begin{itemize}
 \item[$(\mr{i})$] For each $n \in \N$, define $E_n \in \C$ by formula $(\ref{eq:E_n})$, where $\partial^m/\partial x^m$ must be replaced by $\partial^m/\partial z^m$ and $y^c$ by $\overline{y}$. Moreover, define the infinite vectors $\mk{E}$ and $\mk{s}$ in $\C^{\N}$ by setting $\mk{E}:=(E_n)_{n \in \N}$ and $\mk{s}:=\big((2\im(y))^ns_n\big)_{n \in \N}$. Then $\mk{s}$ is the unique solution of the infinite lower triangular linear system $ \mk{e} \cdot \mk{s}=\mk{E}$. 
  \item[$(\mr{ii})$] For each $n \in \N$, formula $(\ref{eq:taylor-bis})$ holds.
\end{itemize}
\end{lemma}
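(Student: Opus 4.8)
The plan is to reduce everything to the explicit derivative formulas of Lemma~\ref{lem:partial} together with term-by-term differentiation of the defining series. Since $\mr{S}(z)=\sum_{n\in\N}\stx_{y,n}(z)s_n$ converges, by the $\sto$--Abel theorem, uniformly on compact subsets of $\Sto(y,R)$, and each $\stx_{y,n}$ is a holomorphic polynomial in $z$, Weierstrass's theorem on locally uniform limits of holomorphic functions lets me differentiate the series term by term to every order. Thus for each $m\in\N$,
\[
\frac{\partial^m}{\partial z^m}\mr{S}(z)=\sum_{\ell\in\N}\frac{\partial^m}{\partial z^m}\stx_{y,\ell}(z)\,s_\ell,
\]
an identity I may then evaluate at $z=y$ and at $z=\overline y$.

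First I would treat the even rows of the system. Evaluating at $z=y$ and inserting (\ref{eq:uno}) gives $\frac{\partial^m}{\partial z^m}\mr{S}(y)=m!\sum_{\ell}e_{2m,\ell}(2\im(y))^{\ell-m}s_\ell$. Multiplying by $(2\im(y))^m/m!$ and recalling the definition of $E_{2m}$ in (\ref{eq:E_n}) (with $\partial/\partial z$ in place of $\partial/\partial x$ and $\overline y$ in place of $y^c$), I obtain
\[
E_{2m}=\sum_{\ell}e_{2m,\ell}(2\im(y))^{\ell}s_\ell=\sum_{\ell}e_{2m,\ell}\,\mk{s}_\ell,
\]
which is precisely the $(2m)$-th equation of $\mk{e}\cdot\mk{s}=\mk{E}$. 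For the odd rows I would evaluate at $z=\overline y$ and insert (\ref{eq:due}), getting $\frac{\partial^m}{\partial z^m}\mr{S}(\overline y)=m!\sum_{\ell}(-1)^\ell e_{2m+1,\ell}(-2\im(y))^{\ell-m}s_\ell$; multiplying by $(-2\im(y))^m/m!$ and using the definition of $E_{2m+1}$ yields $E_{2m+1}=\sum_{\ell}(-1)^\ell e_{2m+1,\ell}(-2\im(y))^{\ell}s_\ell$. Here the one delicate point arises: since $(-1)^\ell(-2\im(y))^\ell=(2\im(y))^\ell$, the factors $(-1)^\ell$ cancel exactly, leaving $E_{2m+1}=\sum_{\ell}e_{2m+1,\ell}\,\mk{s}_\ell$, the $(2m+1)$-th equation of the system. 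This sign cancellation — built into the very definitions of $e_{n\ell}$ and of $E_n$ — is the only step demanding attention; I expect it to be the main (though mild) obstacle, as everything else is bookkeeping.

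Together the even and odd computations establish $\mk{e}\cdot\mk{s}=\mk{E}$. Uniqueness of the solution $\mk{s}$ is then immediate from the structure of $\mk{e}$: by (\ref{eq:range}) it is lower triangular with diagonal entries $e_{nn}=(-1)^n\neq 0$, so the $n$-th equation determines $\mk{s}_n$ from $E_n$ and $\mk{s}_0,\dots,\mk{s}_{n-1}$, and the system solves recursively. This proves $(\mr{i})$. For $(\mr{ii})$ I would simply reread the even-case identity: by (\ref{eq:range}) the coefficient $e_{2n,\ell}$ vanishes unless $n\le\ell\le 2n$, so the sum is finite and
\[
\frac{1}{n!}\frac{\partial^n}{\partial z^n}\mr{S}(y)=\sum_{\ell=n}^{2n}e_{2n,\ell}(2\im(y))^{\ell-n}s_\ell=(2\im(y))^{-n}\sum_{\ell=n}^{2n}e_{2n,\ell}(2\im(y))^{\ell}s_\ell,
\]
which is exactly formula (\ref{eq:taylor-bis}) (with $\partial/\partial z$ in place of $\partial/\partial x$).
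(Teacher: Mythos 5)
Your proof is correct and follows essentially the same route as the paper's: differentiate the series term by term, evaluate at $y$ and $\overline y$, insert the derivative formulas (\ref{eq:uno}) and (\ref{eq:due}) of Lemma~\ref{lem:partial}, and observe that the resulting equations are exactly the rows of $\mk{e}\cdot\mk{s}=\mk{E}$, with uniqueness from lower triangularity. The only difference is that you make explicit the justification of term-by-term differentiation (locally uniform convergence plus Weierstrass), which the paper leaves implicit; the sign cancellation $(-1)^\ell(-2\im(y))^\ell=(2\im(y))^\ell$ you flag is handled identically there.
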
 
\begin{proof}
Let $n \in \N$. By (\ref{eq:range}) and (\ref{eq:uno}), we have the equality
\begin{equation*} 
\frac{\partial^n}{\partial z^n}\mr{S}(y)=\sum_{\ell \in \N}n! \, e_{2n,\ell}(2\im(y))^{\ell-n}s_{\ell}=n! \, (2\im(y))^{-n}\sum_{\ell \in \N} e_{2n,\ell}(2\im(y))^{\ell}s_{\ell},
\end{equation*}
which implies immediately $(\mr{ii})$. The latter equality is equivalent to the following one:
\begin{equation} \label{eq:tre}
E_{2n}=\sum_{\ell \in \N}e_{2n,\ell}(2\im(y))^{\ell}s_{\ell},
\end{equation}
because $E_{2n}=(n!)^{-1}(2\im(y))^n(\partial^n \mr{S}/\partial z^n)(y)$ by definition. 
Thanks to (\ref{eq:range}) and (\ref{eq:due}), we obtain:
\begin{align*}
\frac{\partial^n}{\partial z^n}\mr{S}(\overline{y})&=
\sum_{\ell \in \N}n! \, (-1)^\ell e_{2n+1,\ell}(-2\im(y))^{\ell-n}s_{\ell}=\\
&=n! \, (-2\im(y))^{-n}\sum_{\ell \in \N} e_{2n+1,\ell}(2\im(y))^{\ell}s_{\ell}.
\end{align*}
On the other hand, by definition, $E_{2n+1}=(n!)^{-1}(-2\im(y))^n(\partial^n \mr{S}/\partial z^n)(\overline{y})$ and hence we have:
\begin{equation} \label{eq:quattro}
E_{2n+1}=\sum_{\ell \in \N}e_{2n+1,\ell}(2\im(y))^{\ell}s_{\ell}.
\end{equation}
Evidently, the infinite linear system formed by equations (\ref{eq:tre}) and (\ref{eq:quattro}) for every $n \in \N$ coincides with $\mk{e} \cdot \mk{s}=\mk{E}$. 
This proves $(\mr{i})$. 
\end{proof}

We are now in position to prove Theorem \ref{thm:coefficients}.

\begin{proof}[Proof of Theorem \ref{thm:coefficients}]
Let $\xi,\eta \in \R$ and let $J \in \cS_A$ such that $y=\xi+J\eta$. Since $y \not\in \R$, $\eta \neq 0$. Define $w:=\xi+\ui\eta \in D$. According to (\ref{eq:defs}), denote by $\Phi_J:\C \lra \Q_A$ and $\mr{S}_J:D \lra A$ the functions defined by $\Phi_J(\alpha+\ui\beta):=\alpha+J\beta$ and $\mr{S}_J(z):=\mr{S}(\Phi_J(z))$. Let $n \in \N$. Observe that, given $z=\alpha+\ui\beta \in \C$, we have:
\begin{align} \label{eq:phi_j}
\stx_{y,n}(\Phi_J(z))=& \, (\alpha+J\beta)^2-(\alpha+J\beta)(2\xi)+\xi^2+\eta^2= \nonumber \\
=& \,
\re(\stx_{w,n}(z))+J\im(\stx_{w,n}(z))=\Phi_J(\stx_{w,n}(z)).
\end{align}
Choose a splitting base $(1,J,J_1,JJ_1,\ldots,J_h,JJ_h)$ of $A$ associated with $J$ and denote by $s_{n,1,0},s_{n,2,0},\ldots,s_{n,1,h},s_{n,2,h}$ the real numbers such that $s_n=\sum_{k=0}^h(s_{n,1,k}+Js_{n,2,k})J_k$, where $J_0:=1$. Define $\hat{s}_{n,k} \in \C$ by setting $\hat{s}_{n,k}:=s_{n,1,k}+\ui s_{n,2,k}$. Evidently, it holds:
\begin{equation} \label{eq:s_n}
\textstyle
s_n=\sum_{k=0}^h\Phi_J(\hat{s}_{n,k}) \, J_k.
\end{equation}
By combining the latter equality with (\ref{eq:phi_j}), we infer that
\begin{align*}
\mr{S}_J(z)=& \,
\sum_{n \in \N}\Phi_J(\stx_{w,n}(z))\left(\sum_{k=0}^h\Phi_J(\hat{s}_{n,k}) \, J_k\right)= \nonumber 
\\=& \,
\sum_{k=0}^h\left(\sum_{n \in \N}\Phi_J(\stx_{w,n}(z))\Phi_J(\hat{s}_{n,k})\right)J_k= \nonumber 
\sum_{k=0}^h\Phi_J\left(\sum_{n \in \N}\stx_{w,n}(z)\hat{s}_{n,k}\right)J_k
\end{align*}
for each $z \in \Sto(w,R)$. In particular, it follows that, for each $k \in \{0,1,\ldots,h\}$, the complex spherical series  $\hat{\mr{S}}_k:=\sum_{n \in \N}\stx_{w,n}\hat{s}_{n,k}$ converges on $\Sto(w,R)$ and hence its $\sto$--radius of convergence is $\geq R$. For each $k \in \{0,1,\ldots,h\}$, define $\hat{E}_{n,k} \in \C$ as follows:
\begin{equation*} 
\hat{E}_{n,k}:=
\left\{
 \begin{array}{ll}
  \frac{1}{m!} \, (2\ui\eta)^m \frac{\partial^m}{\partial x^m}\hat{\mr{S}}_k(w) & \mbox{ if } \; n=2m \vspace{.3em}\\
  \frac{1}{m!} \, (-2\ui\eta)^m \frac{\partial^m}{\partial x^m}\hat{\mr{S}}_k(\overline{w}) & \mbox{ if } \; n=2m+1.
 \end{array}
\right.
\end{equation*}
Moreover, define $\hat{\mk{E}}_k$ and $\hat{\mk{s}}_k$ in $\C^{\N}$ by setting
\[
\hat{\mk{E}}_k:=(\hat{E}_{n,k})_{n \in \N}
\; \; \mbox{ and } \; \;
\hat{\mk{s}}_k:=((-2\ui\eta)^n\hat{s}_{n,k})_{n \in \N}.
\]
By point $(\mr{i})$ of Lemma \ref{lem:second}, we know that, for each $k \in \{0,1,\ldots,h\}$, $\hat{\mk{s}}_k$ is the unique solution of the lynear system $\mk{e} \cdot \hat{\mk{s}}_k=\hat{\mk{E}}_k$. This is equivalent to assert that
\begin{equation} \label{eq:kn}
\sum_{\ell=\lfloor n/2 \rfloor}^ne_{n\ell}(-2\ui\eta)^{\ell}\hat{s}_{\ell, k}=\hat{E}_{n,k} \; \mbox{ for each }k,n\in \N.
\end{equation}
By Lemma \ref{lem:cullen}, we know that
\begin{equation} \label{eq:dd}
\dd{^n}{x^n}\mr{S}(y)=\dd{^n}{z^n}\mr{S}_J(w)=\sum_{k=0}^h\Phi_J\left(\dd{^n}{z^n}\hat{S}_k(w)\right)J_k
\end{equation}
for each $n \in \N$. In this way, it follows immediately that
\begin{equation} \label{eq:EE_n}
\textstyle
E_n=\sum_{k=0}^h\Phi_J(\hat{E}_{n,k})J_k.
\end{equation}
By using (\ref{eq:s_n}), (\ref{eq:kn}), (\ref{eq:EE_n}) and Artin's theorem, we obtain:
\begin{align*}
\sum_{\ell=\lfloor n/2 \rfloor}^ne_{n\ell}(-2J\eta)^{\ell}s_{\ell}=& \,
\sum_{\ell=\lfloor n/2 \rfloor}^ne_{n\ell}(-2J\eta)^{\ell}\left(\sum_{k=0}^h\Phi_J(\hat{s}_{\ell,k})J_k\right)=\\
=& \,
\sum_{\ell=\lfloor n/2 \rfloor}^n\sum_{k=0}^h\Phi_J\!\left(e_{n\ell}(-2\ui\eta)^{\ell}\hat{s}_{\ell,k}\right)\!J_k=\\
=& \,
\sum_{k=0}^h\Phi_J\!\!\left(\sum_{\ell=\lfloor n/2 \rfloor}^ne_{n\ell}(-2\ui\eta)^{\ell}\hat{s}_{\ell,k}\right)J_k=\\
=& \,
\sum_{k=0}^h\Phi_J(\hat{E}_{n,k})J_k=E_n
\end{align*}
for each $n \in \N$. In other words, $\mk{s}$ is the unique of the linear system (\ref{eq:system}), as desired. The equivalence between linear system (\ref{eq:system}) and equations (\ref{eq:system-bis}) follows immediately from Cramer's rule. This proves $(\mr{i})$.

Let us prove $(\mr{ii})$. By point $(\mr{ii})$ of Lemma \ref{lem:second}, we know that
\begin{equation*} 
\frac{1}{n!} \, \frac{\partial^n}{\partial z^n}\hat{\mr{S}}_k(w)=(2\ui\eta)^{-n}\sum_{\ell=n}^{2n}e_{2n,\ell}(2\ui\eta)^{\ell}\hat{s}_{\ell k}
\end{equation*}
for each $k,n \in \N$. Since $\mr{S}_J=\sum_{k=0}^h\Phi_J(\hat{\mr{S}}_k)J_k$, we obtain that
\begin{align*}
\frac{1}{n!} \, \frac{\partial^n}{\partial z^n}\mr{S}_J(w)=& \,
\sum_{k=0}^h\left((2J\eta)^{-n}\sum_{\ell=n}^{2n}e_{2n,\ell}(2J\eta)^{\ell}\Phi_J(\hat{s}_{\ell k})\right)J_k=\\
=& \,
(2J\eta)^{-n}\sum_{k=0}^h\sum_{\ell=n}^{2n}e_{2n,\ell}(2J\eta)^{\ell}\Phi_J(\hat{s}_{\ell k})J_k=\\
=& \,
(2J\eta)^{-n}\sum_{\ell=n}^{2n}e_{2n,\ell}(2J\eta)^{\ell}\sum_{k=0}^h\Phi_J(\hat{s}_{\ell k})J_k.
\end{align*}
By Lemma \ref{lem:cullen} and (\ref{eq:s_n}), we infer that
\begin{align*}
\frac{1}{n!} \, \frac{\partial^n}{\partial x^n}\mr{S}(y)=& \,
\frac{1}{n!} \, \frac{\partial^n}{\partial z^n}\mr{S}_J(w)=(2J\eta)^{-n}\sum_{\ell=n}^{2n}e_{2n,\ell}(2J\eta)^{\ell}\sum_{k=0}^h\Phi_J(\hat{s}_{\ell k})J_k=\\
=& \,
(2J\eta)^{-n}\sum_{\ell=n}^{2n}e_{2n,\ell}(2J\eta)^{\ell}s_{\ell}.
\end{align*}
Point (\ref{eq:taylor-bis}) is proved.
\end{proof}

\begin{remark}
The coefficient $s_1$ in the spherical series $\mr{S}(x)=\sum_{n \in \N}\stx_{y,n}(x)s_n$ is the \emph{spherical derivative} $\partial_s \mr{S}(y)$ of $\mr{S}(x)$ at $y$ (cf.~\cite[Def.~6]{GhPe_AIM}).
\end{remark}


%

\section{Power expansion for slice regular functions}\label{Power_expansion_for_slice_regular_functions}

A classical result on holomorphic functions, 
asserts that the Taylor expansion converges to the function locally. The Cauchy integral formula gives also an exact expression for the Taylor remainder.
It is a nice occurrence that such a result of fundamental importance in the context of holomorphic functions can be restated for  slice regular functions in a quite similar form.

\begin{theorem} \label{thm:power-expansion}
Let $f \in \mc{SR}(\OO_D,A)$, let $y \in \OO_D$, let $J \in \cS_A$ such that $y \in \C_J$ and let $r \in \R^+$ such that $\cl(\Sigma_A(y,r)) \subset \OO_D$. Then the following assertions hold.
\begin{itemize}
 \item[$(\mr{i})$] $f$ expands as follows:
 \[
 f(x)=\sum_{n \in \N}(x-y)^{\punto n} \, \frac{1}{n!} \, \dd{^nf}{x^n}(y) \; \mbox{ for each }x \in 
 \Sigma_A(y,r).
 \]
 \item[$(\mr{ii})$] For each $n \in \N$, it holds:
 \[
 \frac{1}{n!} \, \dd{^nf}{x^n}(y)=(2\pi J)^{-1}\int_{\partial B_J(y,r)}(\zeta-y)^{-n-1} \, d\xi \, f(\zeta).
 \]
 Furthermore, 
 there exists a positive real constant $\mr{C}$, depending only on $\mk{a}$ and on $\| \cdot \|_A$, such that
 \[
 \frac{1}{n!} \, \left\|\dd{^nf}{x^n}(y)\right\|_A \leq \mr{C}\, r^{-n}\, {\textstyle \sup_{\partial B_J(y,r)}\|f\|_A}\mbox{ for each }n \in \N.
 \]
 \item[$(\mr{iii})$] Suppose $\OO(y,r) \neq \emptyset$. For each $n \in \N$, it holds:
 \[
 f(x)-\sum_{k=0}^n(x-y)^{\punto k} \, \frac{1}{k!} \, \dd{^kf}{x^k}(y)=(2\pi)^{-1} \, (x-y)^{\punto n+1} \punto \mr{R}_{y,n}(f)(x)
 \]
 for each $x \in \OO(y,r)$, where $\mr{R}_{y,n}(f):\OO(y,r) \lra A$ is the slice regular function induced by the holomorphic stem function
 \[
 B(w,r) \cap B(\overline{w},r) \ni z \longmapsto \int_{\partial B_J(y,r)} \Delta_{\zeta}(z)^{-1}(\zeta^c-z)J^{-1}(\zeta-y)^{-n-1} \, d\zeta \, f(\zeta), 
 \]
 where $w=\xi+\ui\eta \in D$ if $y=\xi+J\eta$. 
 
 Furthermore, if $A$ is associative, then  for each $x \in \OO(y,r)$ we have
 \[
 \mr{R}_{y,n}(f)(x)=\int_{\partial B_J(y,r)} \Delta_{\zeta}(x)^{-1}(\zeta^c-x)J^{-1}(\zeta-y)^{-n-1} \, d\zeta \, f(\zeta).
 \]
 \item[$(\mr{iii^{\pr}})$] Suppose $\OO(y,r) \neq \emptyset$ and define $\mc{F}_J(y,r):=\partial B_J(y,r) \cup \partial B_J(y^c,r)$. Then there exists a positive real constant $\mr{C^{\pr}}$, depending only on $\mk{a}$ and on $\| \cdot \|_A$, such that
 \[
 \left\| \, f(x)-\sum_{k=0}^n(x-y)^{\punto k} \, \frac{1}{k!} \, \dd{^kf}{x^k}(y)\right\|_A \leq \mr{C^{\pr}}   \left(\sup_{\mc{F}_J(y,r)}\|f\|_A\right)  \left(\frac{\sigma_A(x,y)}{r}\right)^n  \frac{\sigma_A(x,y)}{r-\sigma_A(x,y)}
 \]
 for each $x \in \OO(y,r)$.
\end{itemize}
\end{theorem}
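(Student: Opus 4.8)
The plan is to reduce every assertion to one-variable complex analysis on the slice $\C_J$ and then to propagate the resulting identities to the whole circular set by the representation formula (Proposition~6 of \cite{GhPe_AIM}), exactly the device already used in the proof of Proposition~\ref{prop:estimates}. First I would fix a splitting base $(1,J,J_1,JJ_1,\ldots,J_h,JJ_h)$ associated with $J$ (Lemma~\ref{lem:fundamental}) and write $f_J=\sum_{\ell=0}^h g_\ell J_\ell$, where $g_\ell:=f_{1,\ell}+Jf_{2,\ell}$ and $J_0:=1$. By Lemma~\ref{lem:splitting} each $g_\ell$ is holomorphic (as a $\C_J$-valued function) on a neighbourhood of $\overline{B(w,r)}$, the hypothesis $\cl(\Sigma_A(y,r))\subset\OO_D$ being exactly what guarantees this, and by Lemma~\ref{lem:cullen} the slice derivatives transform as $\frac{1}{n!}\frac{\partial^n f}{\partial x^n}(y)=\sum_\ell\frac{1}{n!}g_\ell^{(n)}(w)\,J_\ell$, where $w=\xi+\ui\eta$. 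Throughout, the passage from scalar $\C_J$-identities to $A$-valued ones is made term by term, moving each $J_\ell$ in and out of $\C_J$-products by Artin's theorem.

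I would establish $(\mr{ii})$ first. Applying the classical Cauchy formula for the $n$-th derivative to each $g_\ell$ (with $\ui$ replaced by $J$, so that $(2\pi\ui)^{-1}$ becomes $(2\pi J)^{-1}$) and reassembling, using $\sum_\ell g_\ell(\zeta)J_\ell=f(\zeta)$, gives $\frac{1}{n!}\frac{\partial^n f}{\partial x^n}(y)=(2\pi J)^{-1}\int_{\partial B_J(y,r)}(\zeta-y)^{-n-1}\,d\zeta\,f(\zeta)$. The estimate then follows from $\|(\zeta-y)^{-n-1}\|_A=r^{-n-1}$ on the circle, the length $2\pi r$, and the submultiplicativity constant $C_A$ of (\ref{eq:quasi-banach}); one may take $\mr{C}=C_A^2$. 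For $(\mr{i})$, the estimate just proved yields $\limsup\|a_n\|_A^{1/n}\le 1/r$, so the $\sigma_A$-radius of convergence is $\ge r$ and, by Theorem~\ref{thm:radii} and its Corollary, $\mr{P}(x)=\sum_n(x-y)^{\punto n}a_n$ is a slice function on $\Sigma_A(y,r)$. On $B_J(y,r)$ the ordinary complex Taylor expansions of the $g_\ell$, reassembled, give $f=\mr{P}$; and for $x=\alpha+I\beta\in\Sigma_A(y,r)\setminus\C_J$ the two reconstruction points $z_J=\alpha+J\beta$ and $z_J^c=\alpha-J\beta$ satisfy $\|z_J-y\|_A,\|z_J^c-y\|_A\le\sigma_A(x,y)<r$ and hence lie in $B_J(y,r)$, so the representation formula upgrades the slice equality to $f=\mr{P}$ on all of $\Sigma_A(y,r)$.

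Part $(\mr{iii})$ is the heart of the matter and the main obstacle. The idea is to feed, for each $\ell$, the classical Taylor remainder $g_\ell(z')-\sum_{k=0}^n\frac{(z'-w)^k}{k!}g_\ell^{(k)}(w)=\frac{(z'-w)^{n+1}}{2\pi J}\int_{\partial B_J(y,r)}\frac{g_\ell(\zeta)}{(\zeta-y)^{n+1}(\zeta-z')}\,d\zeta$ into the splitting. The key algebraic point is that on $\C_J$ the slice Cauchy kernel collapses to the ordinary one: for $\zeta,z'\in\C_J$ one has $\Delta_\zeta(z')^{-1}(\zeta^c-z')=(\zeta-z')^{-1}$, so evaluating the stem function $G$ of the statement at $z'$ (i.e.\ replacing $\ui$ by $J$) reproduces exactly $(\zeta-z')^{-1}J^{-1}(\zeta-y)^{-n-1}$. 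One must also check that $G$ is holomorphic in $z$ on the lens $B(w,r)\cap B(\overline w,r)$ (its only singularities $\zeta,\zeta^c$ lie on the two boundary circles, hence off the lens) and satisfies $G(\overline z)=\overline{G(z)}$, so that $\mr{R}_{y,n}(f)=\I(G)$ is slice regular on $\OO(y,r)$. Since $y\in\C_J$, the factor $(x-y)^{\punto n+1}$ has $\C_J$-valued stem components, and a short Artin computation shows that on $\C_J$ the slice product $(x-y)^{\punto n+1}\punto\mr{R}_{y,n}(f)$ equals the pointwise product; matching the coefficient of each $J_\ell$ identifies the two sides on $B_J(y,r)\cap\OO(y,r)$, and the representation formula extends the identity to $\OO(y,r)$. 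When $A$ is associative, the kernel $\Delta_\zeta(x)^{-1}(\zeta^c-x)$ is itself (the value at $x$ of) a slice regular function, so $\I(G)(x)$ is given by the same integral with $z$ replaced by $x$; this is the one place where associativity is genuinely needed.

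Finally, $(\mr{iii}^{\pr})$ follows by combining $(\mr{iii})$ with the bounds already available. I would estimate $\|(x-y)^{\punto n+1}\|_A\le C_A(1+C_A)\,\sigma_A(x,y)^{n+1}$ by Proposition~\ref{prop:estimates}$(\mr{i})$, and bound $\|\mr{R}_{y,n}(f)(x)\|_A$ by applying the representation formula to $\mr{R}_{y,n}(f)$ and estimating its two slice values at $z_J,z_J^c$: on the circle $\|(\zeta-y)^{-n-1}\|_A=r^{-n-1}$, while the denominators $|\zeta-z_J|$ and $|\zeta^c-z_J|$ coming from the two factors of $\Delta_\zeta$ are bounded below by $r-\sigma_A(x,y)$; these two factors are what force the supremum to be taken over both circles $\partial B_J(y,r)$ and $\partial B_J(y^c,r)$, i.e.\ over $\mc{F}_J(y,r)$. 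The length $2\pi r$ cancels one power of $r$, giving $\|\mr{R}_{y,n}(f)(x)\|_A\lesssim (\sup_{\mc{F}_J(y,r)}\|f\|_A)\,r^{-n}(r-\sigma_A(x,y))^{-1}$; multiplying by the power bound produces the factor $(\sigma_A(x,y)/r)^n\,\sigma_A(x,y)/(r-\sigma_A(x,y))$. Uniformity of $\mr{C}^{\pr}$ in $J$ is guaranteed by the universal comparison between $\|\cdot\|_{\mscr{B}_J}$ and $\|\cdot\|_A$ of Lemma~\ref{lem:universal-norm}.
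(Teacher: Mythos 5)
Your proposal follows the paper's proof very closely in structure and in substance: the splitting base of Lemma \ref{lem:universal-norm}/\ref{lem:fundamental}, the reduction of each part to classical one--variable results for the holomorphic components $g_\ell$, the reassembly via Artin's theorem, and the upgrade from $\C_J$ to $\OO(y,r)$ by the representation formula are exactly the paper's steps; your derivation of the constant in $(\mr{ii})$ directly from submultiplicativity (giving $\mr{C}=C_A^2$ instead of the paper's $C_A(h+1)\mr{H}$) is a harmless, slightly cleaner variant, and your treatment of $(\mr{i})$ and $(\mr{iii})$, including the identity $\Delta_\zeta(z')^{-1}(\zeta^c-z')=(\zeta-z')^{-1}$ on $\C_J$ and the matching on $B_J(y,r)\cap\OO(y,r)$, is correct.

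The one step that, as written, would fail is in $(\mr{iii}^{\pr})$. You propose to bound the remainder by multiplying the estimate $\|(x-y)^{\punto n+1}\|_A \leq C_A(1+C_A)\sigma_A(x,y)^{n+1}$ by a bound on $\|\mr{R}_{y,n}(f)(x)\|_A$ at the same point $x$. This implicitly uses an inequality of the form $\|(p\punto q)(x)\|_A \lesssim \|p(x)\|_A\,\|q(x)\|_A$, which is false for the slice product: for example in $A=\q$, with $p(x)=x-i$ and $q(x)=x-j$, one has $q(j)=0$ while $(p\punto q)(j)=2ij\neq 0$. The repair is the one the paper uses and is entirely within your setup: apply the representation formula to the \emph{whole} remainder $g(x)=f(x)-\sum_{k=0}^n(x-y)^{\punto k}\frac{1}{k!}\frac{\partial^k f}{\partial x^k}(y)$, reducing to its two values $g(z_J)$, $g(z_J^c)$ on $\C_J$ (at the cost of a factor $\tfrac{1+C_A^2}{2}$); \emph{there} the slice product does coincide with the pointwise product (your Artin computation), or more directly one estimates $g(z_J)=\sum_\ell\Phi_J(\hat g_\ell(z))J_\ell$ and $g(z_J^c)$ by the scalar Taylor--remainder bound, obtaining the factors $\frac{|z-w|^{n+1}}{r^n(r-|z-w|)}$ and $\frac{|z-\overline w|^{n+1}}{r^n(r-|z-\overline w|)}$, both dominated by $\frac{\sigma_A(x,y)^{n+1}}{r^n(r-\sigma_A(x,y))}$. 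Note also that with this route the integration contour is always $\partial B_J(y,r)$, so the supremum over the larger set $\mc{F}_J(y,r)$ is not forced by ``the two factors of $\Delta_\zeta$'' as you suggest; it simply dominates the supremum that actually occurs.
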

\begin{proof}
By Lemma \ref{lem:universal-norm}, there exist a positive real constant $\mr{H}$, depending only on $\mk{a}$ and on $\| \cdot \|_A$, and a splitting base $\mscr{B}=(1,J,J_1,JJ_1,\ldots,J_h,JJ_h)$ of $A$ associated with $J$ such that
\begin{equation} \label{eq:2}
\|J_{\ell}\|_A=1 \; \mbox{ for each }\ell \in \{1\ldots,h\}\text{ and }\|x\|_{\mscr{B}} \leq \mr{H} \, \|x\|_A \; \mbox{ for each }x \in A.
\end{equation}
Let $f_{1,0},f_{2,0},\ldots,f_{1,h},f_{2,h}$ be the functions in $\mscr{C}^1(D,\R)$ such that $f=\sum_{\ell=0}^h(f_{1,\ell}J_{\ell}+f_{2,\ell}JJ_{\ell})$, where $J_0:=1$. For each $\ell \in \{0,1,\ldots,h\}$, define the functions $\hat{f}_{\ell}:D \lra \C$ and $\widetilde{f}_{\ell}:D \lra \OO_D \cap \C_J$ by setting $\hat{f}_{\ell}:=f_{1\ell}+\ui f_{2,\ell}$ and $\widetilde{f}_{\ell}:=\Phi_J \circ \hat{f}_{\ell}$. Lemma \ref{lem:splitting} ensures that each function $\hat{f}_{\ell}$ is holomorphic. By definition of $f_J$ (see (\ref{eq:defs})), we have that
\begin{equation} \label{eq:easy}
\textstyle
f_J=\sum_{\ell=0}^h\widetilde{f}_{\ell}J_{\ell}=\sum_{\ell=0}^h(\Phi_J \circ \hat{f}_{\ell})J_{\ell} \; \mbox{ on }D.
\end{equation}
Let $\theta,\eta \in \R$ such that $y=\theta+J\eta$ and let $w:=\theta+\ui\eta \in D$. By combining (\ref{eq:easy}) with Lemma \ref{lem:cullen}, we infer that
\begin{equation} \label{eq:easy-2}
\dd{^nf}{x^n}(y)=\sum_{\ell=0}^h\Phi_J\!\! \left(\frac{d^n\hat{f}_{\ell}}{dz^n}(w)\right) J_{\ell} \; \mbox{ for each }n \in \N.
\end{equation}
Let us now show that
\begin{equation} \label{eq:3}
\textstyle
\sum_{\ell=0}^h\sup_{\partial B(w,r)}|\hat{f}_{\ell}| \leq (h+1) \, \mr{H} \left(\sup_{\partial B_J(y,r)}\|f\|_A\right).
\end{equation}
In fact, bearing in mind (\ref{eq:2}), we have:
\begin{align*}
\textstyle
\sum_{\ell=0}^h\sup_{\partial B(w,r)}|\hat{f}_{\ell}| \leq&
\,
\textstyle
(h+1)\left(\sup_{\partial B(w,r)}\left|\big(f_{1,0}^2+f_{2,0}^2+\ldots+f_{1,h}^2+f_{2,h}^2\big)^{1/2}\right|\right)=\\
=&
\,
\textstyle
(h+1)\left(\sup_{\partial B_J(y,r)}\|f\|_{\mscr{B}}\right) \leq (h+1) \, \mr{H}\left(\sup_{\partial B_J(y,r)}\|f\|_A\right).
\end{align*}
Fix $x=\alpha+I\beta \in \Sigma_A(y,r)$ with $\alpha,\beta \in \R$ and $I \in \cS_A$ and define $z:=\alpha+\ui\beta$.

We are ready to prove $(\mr{i})$. By expanding each holomorphic function $\hat{f}_{\ell}$ in power series and by using (\ref{eq:easy}), (\ref{eq:easy-2}) and Artin's theorem, if $x \in B_J(y,r)$, we obtain:
\begin{align*}
f(x)&= \, f_J(z)=\sum_{\ell=0}^h\Phi_J\!\!\left(\sum_{n \in \N}(z-w)^n \, \frac{1}{n!} \, \frac{d^n\hat{f}_{\ell}}{dz^n}(w)\right)J_{\ell}=\\
&= \,
\sum_{\ell=0}^h\sum_{n \in \N}(x-y)^n \cdot \frac{1}{n!} \cdot \Phi_J\!\left(\frac{d^n\hat{f}_{\ell}}{dz^n}(w)\right) \cdot J_{\ell}=\\
&= \,
\sum_{n \in \N}(x-y)^n \, \frac{1}{n!} \left(\sum_{\ell=0}^h \Phi_J\!\left(\frac{d^n\hat{f}_{\ell}}{dz^n}(w)\right) J_{\ell}\right)=\\
&= \,
\sum_{n \in \N}(x-y)^n \, \frac{1}{n!} \, \dd{^nf}{x^n}(y)
=\sum_{n \in \N}(x-y)^{\punto n} \, \frac{1}{n!} \, \dd{^nf}{x^n}(y).
\end{align*}
If $\OO(y,r) \neq \emptyset$, then the preceding chain of equalities ensures that $f$ and the  slice regular function on $\OO(y,r)$, sending $x$ into $\sum_{n \in \N}(x-y)^{\punto n} (1/n!)(\partial^nf/\partial x^n)(y)$, coincide on $\OO(y,r) \cap \C_J$. By the general version of representation formula for slice functions given in \cite[Prop.~6]{GhPe_AIM}, 
we infer that they coincide on the whole $\OO(y,r)$.

Fix $n \in \N$. Let us show $(\mr{ii})$. By applying Cauchy formula for derivatives to each $\hat{f}_{\ell}$ and by using (\ref{eq:easy}), (\ref{eq:easy-2}), Artin's theorem, (\ref{eq:2}) and (\ref{eq:3}), we obtain:
\begin{align*}
\frac{1}{n!} \, \dd{^nf}{x^n}(y)&= \, \sum_{\ell=0}^h\Phi_J\!\!\left(\frac{1}{n!} \, \frac{d^n\hat{f}_{\ell}}{dz^n}(w)\right)J_{\ell}=\\
&=
\sum_{\ell=0}^h\left((2\pi J)^{-1}\int_{\partial B_J(y,r)}(\zeta-y)^{-n-1} d\zeta \, \widetilde{f}_{\ell}(\Phi_J^{-1}(\zeta))\right)J_{\ell}=\\
&=
(2\pi J)^{-1}\int_{\partial B_J(y,r)}(\zeta-y)^{-n-1} d\zeta \left(\sum_{\ell=0}^h\widetilde{f}_{\ell}(\Phi_J^{-1}(\zeta))J_{\ell}\right)=\\
&=
(2\pi J)^{-1}\int_{\partial B_J(y,r)}(\zeta-y)^{-n-1} d\zeta f(\zeta)
\end{align*}
and
\begin{align*}
\frac{1}{n!} \, \left\|\dd{^nf}{x^n}(y)\right\|_A &\leq  \, C_A \sum_{\ell=0}^h\frac{1}{n!} \,\left\| \Phi_J\!\!\left(\frac{d^n\hat{f}_{\ell}}{dz^n}(w)\right)\right\|_A\|J_{\ell}\|_A=\\
&=
C_A \sum_{\ell=0}^h\frac{1}{n!} \,\left|\frac{d^n\hat{f}_{\ell}}{dz^n}(w)\right| \leq C_A \, \frac{1}{r^n} \sum_{\ell=0}^h\sup_{\partial B(w,r)}|\hat{f}_{\ell}| \leq\\
&\leq 
\,
C_A (h+1) \, \mr{H}\left({\textstyle \sup_{\partial B_J(y,r)}\|f\|_A}\right)\frac{1}{r^n} 
\end{align*}
Defining $\mr{C}:=C_A (h+1) \, \mr{H}$, we complete the proof of $(\mr{ii})$.

Define the function $g:\Sigma_A(y,r) \lra A$ and, for each $\ell \in \{0,1,\ldots,h\}$, the function $\hat{g}_{\ell}:B(w,r) \lra \C$ by setting
\[
g(x):=f(x)-\sum_{k=0}^n(x-y)^{\punto k} \, \frac{1}{k!} \, \dd{^kf}{x^k}(y), \quad
\hat{g}_{\ell}(z):=\hat{f}_{\ell}(z)-\sum_{k=0}^n(z-w)^k \, \frac{1}{k!} \, \frac{d^k\hat{f}_{\ell}}{dz^k}(w).
\]
By combining (\ref{eq:easy}), (\ref{eq:easy-2}) and Artin's theorem, we obtain that
\begin{equation} \label{eq:4}
g(x)=\sum_{\ell=0}^h\Phi_J(\hat{g}_{\ell}(z))J_{\ell}
\; \mbox{ for each }x \in B_J(y,r).
\end{equation}
We recall the integral expression for the Taylor remainder of a holomorphic function $h$. For each $n \in \N$ and for each $z$ in a disc $B(w,r)$, it holds:
 \begin{equation}\label{eq:Taylor}
 h(z)-\sum_{k=0}^n(z-w)^k \, \frac{1}{k!} \, \frac{d^kh}{dz^k}(w)=\frac{1}{2\pi\ui} \, (z-w)^{n+1}\int_{\partial B(w,r)}h(\zeta) \, (\zeta-w)^{-n-1}(\zeta-z)^{-1} \, d\zeta.
\end{equation}
Applying \eqref{eq:Taylor} to each $\hat{f}_{\ell}$, we infer that
\begin{align}\label{eq:remainder}
g(x)&= \sum_{\ell=0}^h\Phi_J\!\!\left((2\pi\ui)^{-1}(z-w)^{n+1}\int_{\partial B(w,r)}(\zeta-z)^{-1}(\zeta-w)^{-n-1} d\zeta \, \hat{f}_{\ell}(\zeta)\right)\!J_{\ell}=\notag\\
&= \,
\sum_{\ell=0}^h\left((2\pi J)^{-1}(x-y)^{n+1}\int_{\partial B_J(y,r)}(\zeta-x)^{-1}(\zeta-y)^{-n-1} d\zeta \, \hat{f}_{\ell}(\Phi_J^{-1}(\zeta))\right)\!J_{\ell}=\notag\\
&= \,
(2\pi J)^{-1}(x-y)^{n+1}\int_{\partial B_J(y,r)}(\zeta-x)^{-1}(\zeta-y)^{-n-1} d\zeta \left(\sum_{\ell=0}^h\hat{f}_{\ell}(\Phi_J^{-1}(\zeta))J_{\ell}\right)=\notag\\
&= \,
(2\pi J)^{-1}(x-y)^{n+1}\int_{\partial B_J(y,r)}(\zeta-x)^{-1}(\zeta-y)^{-n-1} d\zeta \, f(\zeta).
\end{align}
Let us prove $(\mr{iii})$. Define $G:B(w,r) \cap B(\overline{w},r) \lra A \otimes \C$ by setting
\[
G(z):=\int_{\partial B_J(y,r)} \Delta_{\zeta}(z)^{-1}(\zeta^c-z)(\zeta-y)^{-n-1} J^{-1} \, d\zeta \, f(\zeta).
\]
Since for each $\zeta \in \partial B_J(y,r)$ the function from $B(w,r) \cap B(\overline{w},r)$ to $A \otimes \C$, sending $z$ into $\Delta_{\zeta}(z)^{-1}(\zeta^c-z)$, is a holomorphic stem function (see \cite[Sect.~5]{GhPe_AIM}), it follows immediately that $G$ is a well--defined holomorphic stem function and hence $\mr{R}_{y,n}$ is a slice regular function. Denote by $F_1,F_2:\partial B_J(y,r) \times (B(w,r) \cap B(\overline{w},r)) \lra \C_J$ the functions such that $F_1(\zeta,z)+i F_2(\zeta,z)=\Delta_{\zeta}(z)^{-1}(\zeta^c-z)$ on $\partial B_J(y,r) \times (B(w,r) \cap B(\overline{w},r))$.

If $x=\alpha+J\beta \in B_J(y,r)$, then, bearing in mind \eqref{eq:remainder}, we have:
\begin{align*}
(2&\pi)^{-1}(x-y)^{\punto n+1} \punto \mr{R}_{y,n}(f)(x)=\\
&= \,
(2\pi)^{-1}(x-y)^{n+1} \int_{\partial B_J(y,r)} (F_1(\zeta,z)+JF_2(\zeta,z))(\zeta-y)^{-n-1} J^{-1} \, d\zeta \, f(\zeta)=\\
&= \,
(2\pi)^{-1}(x-y)^{n+1} \int_{\partial B_J(y,r)} \Delta_{\zeta}(x)^{-1}(\zeta^c-x)(\zeta-y)^{-n-1} J^{-1} \, d\zeta \, f(\zeta)=\\
&= \,
(2\pi J)^{-1}(x-y)^{n+1} \int_{\partial B_J(y,r)} (\zeta-x)^{-1}(\zeta-y)^{-n-1} \, d\zeta \, f(\zeta)=g(x).
\end{align*}
By the representation formula for slice functions, we infer that
\begin{center}
$(2\pi)^{-1}(x-y)^{\punto n+1} \punto \mr{R}_{y,n}(f)(x)=g(x)$ for each $x \in \OO(y,r)$. 
\end{center}
If $A$ is associative and $x=\alpha+I\beta$ is an element of $\OO(y,r)$ for some $I \in \cS_A$, then we have:
\begin{align*}
\mr{R}_{y,n}(f)(x)&=
\int_{\partial B_J(y,r)} F_1(\zeta,z)(\zeta-y)^{-n-1} J^{-1} \, d\zeta \, f(\zeta)+\\
& \quad
+I\int_{\partial B_J(y,r)} F_2(\zeta,z)(\zeta-y)^{-n-1} J^{-1} \, d\zeta  \, f(\zeta)=\\
&= \,
\int_{\partial B_J(y,r)} (F_1(\zeta,z)+IF_2(\zeta,z))(\zeta-y)^{-n-1} J^{-1} \, d\zeta  \, f(\zeta)=\\
&= \,
\int_{\partial B_J(y,r)} \Delta_{\zeta}(x)^{-1}(\zeta^c-x)(\zeta-y)^{-n-1} J^{-1} \, d\zeta  \, f(\zeta).
\end{align*}

It remains to prove $(\mr{iii^{\pr }})$. Consider the element $x=\alpha+I\beta$ of $\OO(y,r)$ again and define $z_J:=\alpha+J\beta$. By the representation formula, we have that
\[
g(x)=\frac{1}{2}(g(z_J)+g(z_J^c))+\frac{I}{2}\big(J(g(z_J)-g(z_J^c))\big)
\]
and hence
\begin{align*}
\|g(x)\|_A \leq & \, 
\frac{1+C_A^2}{2}(\|g(z_J)\|_A+\|g(z_J^c)\|_A).
\end{align*}
By using (\ref{eq:4}), \eqref{eq:Taylor} and (\ref{eq:3}), we obtain:
\begin{align*}
\|g(z_J)\|_A &\leq  \sum_{\ell=0}^h|\hat{g}_{\ell}(z)| \leq \frac{1}{r^n} \cdot \frac{|z-w|^{n+1}}{r-|z-w|} \cdot \sum_{\ell=0}^h \left({\textstyle \sup_{\partial B(w,r)}|\hat{f}_{\ell}|}\right)  \\
 &\leq 
(h+1) \, \mr{H} \left({\textstyle \sup_{\partial B_J(y,r)}\|f\|_A}\right) \cdot \frac{1}{r^n} \cdot \frac{|z-w|^{n+1}}{r-|z-w|}
\end{align*}
and similarly 
\[
\|g(z_J^c)\|_A \leq (h+1) \, \mr{H} \left({\textstyle \sup_{\partial B_J(y^c,r)}\|f\|_A}\right) \cdot \frac{1}{r^n} \cdot \frac{|z-\overline w|^{n+1}}{r-|z-\overline w|}.
\]
It follows that
\begin{align*}
\|g(x)\|_A &\leq  \frac{1+C_A^2}{2}(h+1) \, \mr{H} \left({\textstyle \sup_{\mc{F}_J(y,r)}\|f\|_A}\right) \cdot \frac{1}{r^n} \cdot \left(\frac{|z-w|^{n+1}}{r-|z-w|}+\frac{|z-\overline{w}|^{n+1}}{r-|z-\overline{w}|}\right)  \\
&\leq 
(1+C_A^2)(h+1) \, \mr{H} \left({\textstyle \sup_{\mc{F}_J(y,r)}\|f\|_A}\right) \cdot \frac{1}{r^n} \cdot \left(\frac{\sigma_A(x,y)^{n+1}}{r-\sigma_A(x,y)}\right),
\end{align*}
completing the proof of the theorem.
\end{proof}

\begin{definition}
Given a function $f:U \lra A$ defined on a non--empty open subset $U$ of $\Q_A$, we say that $f$ is \emph{$\sigma_A$--analytic} or \emph{power analytic},  if, for each $y \in U$, there exists a non--empty $\sigma_A$--ball $\Sigma$ centered at $y$ and contained in $U$, and a power series $\sum_{n \in \N}(x-y)^{\punto n}a_n$ with coefficients in $A$, which converges to $f(x)$ for each $x \in \Sigma \cap U$. 

\end{definition}

\begin{theorem}\label{poweranalytic}
Let $\OO_D$ be connected and let $f:\OO_D \lra A$ be any function. The following assertions hold.
\begin{itemize}
 \item[$(\mr{i})$] If $D \cap \R=\emptyset$, then $f$ is a slice regular function if and only if $f$ is a $\sigma_A$--analytic slice function.
 \item[$(\mr{ii})$] If $D \cap \R \neq \emptyset$, then $f$ is a slice regular function if and only if $f$ is $\sigma_A$--analytic.
\end{itemize}
\end{theorem}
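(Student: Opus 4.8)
The plan is to prove both implications, handling the ``only if'' parts uniformly and splitting the ``if'' part according to whether $D$ meets $\R$.

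\emph{The forward implication.} Assume $f\in\mc{SR}(\OO_D,A)$; then $f$ is a slice function by definition. Given $y\in\OO_D$, I pick $J\in\cS_A$ with $y\in\C_J$ and $r>0$ small enough that $\cl(\Sigma_A(y,r))\subset\OO_D$, which is possible since $\OO_D$ is open and, by \eqref{eq:sigma-ball}, $\Sigma_A(y,r)=B_J(y,r)\cup\OO(y,r)$ collapses to $\{y\}$ as $r\to0$. Theorem \ref{thm:power-expansion}(i) then expands $f$ as the convergent power series $\sum_{n}(x-y)^{\punto n}\frac{1}{n!}\dd{^nf}{x^n}(y)$ on the non-empty $\sigma_A$-ball $\Sigma_A(y,r)\subseteq\OO_D$. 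Hence $f$ is $\sigma_A$-analytic and a slice function, which settles the ``only if'' part of both (i) and (ii).

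\emph{The converse, and case (i).} I first note that any $\sigma_A$-analytic $f$ is holomorphic on every slice. Fixing $J\in\cS_A$ and $w\in D$, set $y'=\Phi_J(w)\in\C_J$ and use a local expansion $f=\sum_n(x-y')^{\punto n}a_n$ valid on $\Sigma_A(y',r')\supseteq B_J(y',r')$; since $(x-y')^{\punto n}=(x-y')^{n}$ on $\C_J$ by \eqref{eq:punto-representation} with $I=J$, we get $f_J(z)=\sum_n(z-y')^{n}a_n$, holomorphic near $w$. As $w$ and $J$ are arbitrary, $\partial f_J/\partial\overline{z}=0$ on $D$ for every $J$, where $\partial/\partial\overline z=\frac12(\partial_\alpha+J\partial_\beta)$ as in Lemma \ref{lem:cullen}, and each $f_J$ is smooth. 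In case (i), $f$ is moreover a slice function $\I(F)$, and smoothness of the $f_J$ makes the stem $F$ of class $\mscr{C}^1$, so $f\in\mc{S}^1(\OO_D,A)$; then $\dd{f}{x^c}=\I(\partial F/\partial\overline z)$ restricts on $\C_J$ to $\partial f_J/\partial\overline z=0$ by Lemma \ref{lem:cullen}. Finally, a slice function vanishing on a single slice vanishes identically: if $G=G_1+iG_2$ is its stem, the relation $G_1(z)+JG_2(z)=0$ together with the parities $G_1(\overline z)=G_1(z)$, $G_2(\overline z)=-G_2(z)$ (evaluated at $\overline z$) forces $G_1(z)=-JG_2(z)=JG_2(z)$, whence $G_1=G_2=0$. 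Therefore $\dd{f}{x^c}=0$ and $f$ is slice regular.

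\emph{Converse, case (ii).} Now $D\cap\R\neq\emptyset$ and $D$ is connected, and it suffices to prove that $f$ is a slice function, after which case (i) applies. Fix $\xi_0\in D\cap\R$ and set $\phi:=f|_{D\cap\R}$. Since $f(\alpha)$ does not depend on the slice for real $\alpha$, differentiation along $\R$ gives $\partial_\alpha^k f_J(\xi_0)=\phi^{(k)}(\xi_0)$, independent of $J$. As $f_J$ is holomorphic it equals its Taylor series near $\xi_0$, namely $f_J(z)=\sum_k\frac1{k!}\big((\alpha-\xi_0)+J\beta\big)^k\phi^{(k)}(\xi_0)$; writing $(z-\xi_0)^k=p_k(z)+iq_k(z)$ with $p_k,q_k$ real, this reads $f_J(z)=\sum_k\frac1{k!}\big(p_k(z)+Jq_k(z)\big)\phi^{(k)}(\xi_0)$. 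Hence the candidate stem components $F_1^{(J)}(z):=\frac12\big(f_J(z)+f_J(\overline z)\big)=\sum_k\frac1{k!}p_k(z)\phi^{(k)}(\xi_0)$ and $F_2^{(J)}(z):=-\frac J2\big(f_J(z)-f_J(\overline z)\big)=\sum_k\frac1{k!}q_k(z)\phi^{(k)}(\xi_0)$ are independent of $J$ on a neighborhood $N$ of $D\cap\R$. Both are real-analytic on all of $D$ (as $f_J$ is), so, agreeing on the open set $N$ of the connected set $D$, the identity principle for real-analytic functions yields $F_i^{(I)}=F_i^{(J)}$ on $D$ for all $I,J$. Thus $F:=F_1^{(J)}+iF_2^{(J)}$ is a well-defined stem function with $f_J=F_1+JF_2$ for every $J$, i.e.\ $f=\I(F)$ is a slice function, and case (i) concludes. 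The delicate point is exactly this step: in a general alternative $^*$-algebra one cannot pass between slices by algebra (for instance $I-J$ need not lie in $\Q_A$, and $\Q_A\neq A$ in general), so sliceness is not visible directly; the device that circumvents this is that the Taylor coefficients of $f_J$ at a \emph{real} point are the slice-independent derivatives $\phi^{(k)}(\xi_0)$, whence the candidate stem is manifestly $J$-independent near $\R$, and connectedness plus real-analyticity propagate this over $D$.
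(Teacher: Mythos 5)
Your proof is correct and follows the same overall strategy as the paper's: the forward implication via Theorem \ref{thm:power-expansion}, the converse by first showing that each restriction $f_J$ is holomorphic on its slice (so that, once $f$ is known to be a slice function of class $\mscr{C}^1$, $\partial f/\partial x^c$ vanishes by Lemma \ref{lem:cullen} and the one-slice-determines-all argument), and, when $D\cap\R\neq\emptyset$, by extracting sliceness from the power expansion at a real point and propagating it over the connected set $D$ by the identity principle for real-analytic functions. The one place where you genuinely diverge is the final step of case (ii): the paper forms the defect function $\tilde f_I(z)=f_I(z)-\frac12\big(f_J(z)+f_J(\overline z)\big)+\frac I2\big(J(f_J(z)-f_J(\overline z))\big)$, shows it vanishes identically, and then cites Lemma~3.2 of \cite{Gh_Pe_GlobDiff} to conclude that $f$ is a slice function, whereas you observe that the candidate stem components $F_1^{(J)},F_2^{(J)}$ are independent of $J$ (your identity $F_i^{(I)}=F_i^{(J)}$ on $D$ is equivalent to $\tilde f_I\equiv 0$) and assemble the stem function $F=F_1+iF_2$ directly, verifying the parity conditions yourself. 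This makes the argument self-contained, removing the dependence on the external lemma at the cost of a few extra lines; likewise, your identification of the Taylor coefficients at a real center with the slice-independent derivatives $\phi^{(k)}(\xi_0)$ is a more explicit form of the paper's appeal to the fact that a convergent power series with right coefficients centered at a real point defines a slice function on a euclidean ball.
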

\begin{proof}
From Theorem~\ref{thm:power-expansion}, if $f\in\mc{SR}(\OO_D,A)$, then $f$ is $\sigma_A$--analytic. Conversely, assume that $f$ is $\sigma_A$--analytic. If $f(x)=\sum_n(x-y)^{\punto n} a_n$ on a $\sigma_A$--ball $\Sigma$ centered at $y\in\C_J$, then $f_J(z)=f(\Phi_J(z))=\Phi_J(\sum_n(z-w)^na_n)$ for $x=\Phi_J(z)$, $y=\Phi_J(w)$,   $z$ and $w$ in an open subset of $D$. If $f$ is a slice function, the representation formula \cite[Prop.~6]{GhPe_AIM} and the smoothness result \cite[Prop.~7]{GhPe_AIM}   imply that the stem function inducing $f$, and hence $f$, are real analytic.
Moreover, from Lemma~\ref{lem:cullen} we get that $\partial f/\partial{x^c}=0$ at $y$. By the arbitrariness of the choice of $y$,  $\partial f/\partial{x^c}=0$ on the whole domain $\OO_D$.  

It remains to prove that if $D\cap\R\ne\emptyset$, the sliceness of $f$ is a consequence of its $\sigma_A$--analyticity. If $y\in\OO_D\cap\R$, then $(x-y)^{\punto n}=(x-y)^n$ and $f$ expands as $f(x)=\sum_n(x-y)^na_n$ on an euclidean  ball $B$. In particular, $f$ is slice on $B$ (cf.~Examples 2(1) of \cite{GhPe_AIM}). Let $I,J\in\cS_A$ be fixed, and consider the function $\tilde f_I:D\rightarrow A$ defined by
\[
\tilde f_I(z):=f_I(z)-\frac12(f_J(z)+f_J(\overline z))+\frac I2\left(J(f_J(z)-f_J(\overline z))\right).
\]
Since $f$ is slice on $B$, $\tilde f_I\equiv0$ on $B$. Since $f_I, f_J$ are real analytic on $D$, also $\tilde f_I\in \mscr{C}^\omega(D,A)$. Therefore $\tilde f_I\equiv0$ on $D$. Lemma~3.2 in \cite{Gh_Pe_GlobDiff} permits to conclude that $f$ is a slice function.
\end{proof}

\section{Spherical expansion for slice regular functions}\label{Spherical_expansion_for_slice_regular_functions}
We begin with two technical lemmas.

\begin{lemma} \label{lem:T}
The supremum $\Theta$, defined by setting
\[
\Theta:=\frac{1}{2\pi}\sup_{(w,r) \in \C \times \R^+}\frac{\mr{length}(\partial \Sto(w,r))}{\sqrt{r^2+|\im(w)|^2}-|\im(w)|}, 
\]
is finite. More precisely, it holds:
\begin{equation*} 
\Theta =\frac{1+\sqrt2}{2}\frac{\Gamma(1/4)^2}{\pi^{3/2}}< 2.85.
\end{equation*}
\end{lemma}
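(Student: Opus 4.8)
The plan is to reduce the two–parameter supremum to a one–variable problem, to evaluate the ratio at the lemniscate configuration $r=|\im(w)|$, and then to prove that this configuration is the maximizer.

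First I would normalize. The ratio under the supremum is invariant under the real translations $w\mapsto w+s$ ($s\in\R$: here $\Sto(w+s,r)=\Sto(w,r)+s$ and $\im(w+s)=\im(w)$), under $w\mapsto\overline w$ (since $\Delta_{\overline w}=\Delta_w$), and under the dilations $(w,r)\mapsto(\lambda w,\lambda r)$, $\lambda>0$ (both the length and the denominator are homogeneous of degree $1$). Hence I may take $w=\ui\eta$ with $\eta\ge0$ and, after a dilation, $r=1$; for $\eta=0$ the set $\Sto$ is a disc of radius $1$ and the ratio equals $1$. For $w=\ui\eta$ one has $\Delta_w(z)=z^2+\eta^2$, so $\partial\Sto(\ui\eta,1)$ is the Cassini oval $\{|z^2+\eta^2|=1\}$; pushing it onto the circle $\{|\zeta+\eta^2|=1\}$ by $\zeta=z^2$ (a double cover, with $|dz|=|d\zeta|/2\sqrt{|\zeta|}$) gives
\[
L(\eta):=\mr{length}\big(\partial\Sto(\ui\eta,1)\big)=\int_0^{2\pi}\frac{d\psi}{|\eta^2-e^{\ui\psi}|^{1/2}}=\int_0^{2\pi}\frac{d\psi}{(\eta^4-2\eta^2\cos\psi+1)^{1/4}}.
\]
By \eqref{eq:sto-inequality} the denominator equals $D(\eta):=\sqrt{1+\eta^2}-\eta$ (the least distance from the focus $\ui\eta$ to the oval), so $\Theta=\sup_{\eta\ge0}G(\eta)$ with $G(\eta)=L(\eta)/\big(2\pi D(\eta)\big)$.

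Next I would evaluate $G$ at the lemniscate $\eta=1$. There the integrand has an integrable singularity and
\[
L(1)=\int_0^{2\pi}(2-2\cos\psi)^{-1/4}\,d\psi=\frac{1}{\sqrt2}\int_0^{2\pi}\big|\sin(\psi/2)\big|^{-1/2}\,d\psi=\frac{\Gamma(1/4)^2}{\sqrt\pi},
\]
the last step from $\int_0^{\pi/2}(\sin u)^{-1/2}\,du=\Gamma(1/4)\Gamma(1/2)/\big(2\Gamma(3/4)\big)$ and $\Gamma(1/4)\Gamma(3/4)=\pi\sqrt2$. Since $D(1)=\sqrt2-1$ and $(\sqrt2-1)^{-1}=\sqrt2+1$, this yields
\[
G(1)=\frac{1}{2\pi}\,\frac{\Gamma(1/4)^2/\sqrt\pi}{\sqrt2-1}=\frac{1+\sqrt2}{2}\,\frac{\Gamma(1/4)^2}{\pi^{3/2}},
\]
which is the asserted value (numerically $<2.85$); in particular $\Theta\ge G(1)$.

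The core of the proof is the reverse bound $G(\eta)\le G(1)$ for every $\eta$, and this is the step I expect to be the main obstacle: at $\eta=1$ the function $L$ has a cusp—its one–sided derivatives are $\mp\infty$—so $\eta=1$ is not a stationary point and a direct critical–point analysis is unavailable. I would instead exploit two exact facts about $\Lambda(t):=\int_0^{2\pi}|t-e^{\ui\psi}|^{-1/2}\,d\psi=L(\sqrt t)$. The first is the functional equation $\Lambda(1/t)=\sqrt t\,\Lambda(t)$, immediate from $|t^{-1}-e^{\ui\psi}|=t^{-1}|1-te^{\ui\psi}|=t^{-1}|t-e^{\ui\psi}|$. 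The second is the expansion
\[
\Lambda(t)=2\pi\,{}_2F_1\!\big(\tfrac14,\tfrac14;1;t^2\big)=2\pi\sum_{n\ge0}\Big(\frac{(1/4)_n}{n!}\Big)^{2}t^{2n}\qquad(0\le t<1),
\]
whose coefficients are nonnegative, so $F(u):={}_2F_1(\tfrac14,\tfrac14;1;u)$ is nondecreasing on $[0,1]$. Combining these, $L(\eta)=2\pi F(\eta^4)$ for $0\le\eta\le1$ and $L(\eta)=2\pi\,\eta^{-1}F(\eta^{-4})$ for $\eta\ge1$.

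Now the two monotonicity statements follow cleanly. On $(0,1)$ the numerator $L(\eta)=2\pi F(\eta^4)$ is nondecreasing while $D$ is decreasing, so $G$ is increasing. On $(1,\infty)$ the map $\eta\mapsto F(\eta^{-4})$ is nonincreasing, whence
\[
(\log L)'(\eta)=-\frac1\eta+\big(\log F(\eta^{-4})\big)'\le-\frac1\eta<-\frac{1}{\sqrt{1+\eta^2}}=\frac{D'(\eta)}{D(\eta)}=(\log D)'(\eta),
\]
so $(\log G)'=(\log L)'-(\log D)'<0$ and $G$ is decreasing. Since $G$ is continuous at $\eta=1$ (the two representations agree there, giving $G(1)=F(1)/D(1)$ with $F(1)=\sqrt\pi/\Gamma(3/4)^2$, consistent with the value above), it increases on $(0,1]$ and decreases on $[1,\infty)$. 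Therefore $\sup_{\eta\ge0}G=G(1)$, attained exactly at the lemniscate configurations $r=|\im(w)|$, which proves $\Theta=\tfrac{1+\sqrt2}{2}\,\Gamma(1/4)^2/\pi^{3/2}<\infty$.
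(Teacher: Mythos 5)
Your proof is correct, and it reaches the stated value by the same overall route as the paper: reduce the two-parameter supremum to one variable using translation, conjugation and dilation invariance, identify the Bernoulli lemniscate configuration as the maximizer, and evaluate the ratio there via the classical lemniscate length $\Gamma(1/4)^2/\sqrt{\pi}$. The difference is in how the crucial unimodality is handled. The paper normalizes $|\im(w)|=1$, quotes Matz's elliptic-integral formula for the length of a Cassini oval, and then simply \emph{asserts} that the normalized length $L(i,\gamma)$ increases on $(0,1)$ and decreases on $(1,\infty)$ --- no argument is given for this, which is the only nontrivial step. You normalize $r=1$ instead, derive the length integral from scratch via the substitution $\zeta=z^2$, and then actually prove the monotonicity: the identity $\Lambda(t)=2\pi\,{}_2F_1(\tfrac14,\tfrac14;1;t^2)$ (nonnegative Taylor coefficients, hence monotone on $[0,1]$ and continuous up to $1$ since $c-a-b=\tfrac12>0$) combined with the inversion relation $\Lambda(1/t)=\sqrt{t}\,\Lambda(t)$ gives the increase on $(0,1]$ and, via the logarithmic-derivative comparison $-1/\eta<-1/\sqrt{1+\eta^2}$, the decrease on $[1,\infty)$. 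So your argument supplies a complete proof of precisely the point the paper leaves unjustified, which is a genuine improvement. The only cosmetic quibble is the sign convention in your parenthetical ``$\mp\infty$'' for the one-sided derivatives of $L$ at the cusp (the left derivative is $+\infty$ and the right is $-\infty$); this plays no role in the argument.
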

\begin{proof}
Define $\ell,L:\C \times \R^+ \lra \R^+$ by setting $\ell(w,r):=\mr{length}(\partial \Sto(w,r))$ and
\[
L(w,r):=\frac{\ell(w,r)}{\sqrt{r^2+|\im(w)|^2}-|\im(w)|}=\frac{\sqrt{r^2+|\im(w)|^2}+|\im(w)|}{r^2} \cdot \ell(w,r).
\]
Let $(w,r) \in \C \times \R^+$. Observe that, if $\im(w)=0$, then $L(w,r)=2\pi$. Suppose $\im(w) \neq 0$ and define $\gamma:=r/|\im(w)|>0$. It is immediate to verify that
\[
\ell(w,r)=|\im(w)| \cdot \ell(i,\gamma).
\]
It follows that $L(w,r)=L(i,\gamma)=\gamma^{-2}(1+\sqrt{1+\gamma^2})\cdot \ell(i,\gamma)$. The length of the Cassini oval of radius $\gamma$ is given by the following integral formula (cf.~\cite{MatzAMM1895}):
\[
\ell(i,\gamma)=\frac{4\gamma^2}{\sqrt{1+\gamma^2}}\int_0^{\pi/2}{\left(1-\frac{4\gamma^2}{(1+\gamma^2)^2}\sin^2\phi\right)^{-1/4}}{d\phi}\;.
\]
The normalized length $L(i,\gamma)$ is monotonically increasing from $4\pi$ for $\gamma\in(0,1)$, and monotonically decreasing to $2\pi$ for $\gamma>1$. Therefore it takes its supremum at $\gamma=1$, the radius corresponding to the Bernoulli lemniscate, which has length $\ell(i,1)=\Gamma(1/4)^2/\sqrt\pi$.

This completes the proof.
\end{proof}

\begin{lemma} \label{lem:S}
Given $w,z \in \C$, the following assertions hold.
\begin{itemize}
 \item[$(\mr{i})$] For each $n \in \N$, we have:
 \begin{equation} \label{eq:useful-1}
 |\stx_{w,n+1}(z)| \leq \sto(w,z)^n\big(\sqrt{\sto(w,z)^2+|\im(w)|^2}+|\im(w)|\big)
 \end{equation}
 and
 \begin{equation} \label{eq:useful-2}
 |\stx_{w,n+1}(z)| \geq \sto(w,z)^n\big(\sqrt{\sto(w,z)^2+|\im(w)|^2}-|\im(w)|\big).
 \end{equation}
 \item[$(\mr{ii})$] Let $n \in \N$ and let $\zeta \in \C$ with $\sto(w,\zeta)>\sto(w,z)$. Then it holds:
 \begin{equation} \label{eq:useful-3}
 |\zeta-z| \geq \frac{\big(\sto(w,\zeta)-\sto(w,z)\big)^2}{3\sto(w,\zeta)+2|\im(w)|}
 \end{equation}
 and
 \begin{equation} \label{eq:useful-4}
 |\stx_{w,n+1}(\zeta)| \geq \sto(w,\zeta)^{n+1} \cdot \frac{\sto(w,\zeta)}{\sto(w,\zeta)+2|\im(w)|}.
 \end{equation}
\end{itemize} 
\end{lemma}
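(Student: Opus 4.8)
The plan is to reduce all four inequalities to the scalar estimate \eqref{eq:sto-inequality} already at our disposal, together with two elementary algebraic identities for $\Delta_w$. Throughout I abbreviate $s:=\sto(w,z)$, $t:=\sto(w,\zeta)$ and $\mu:=|\im(w)|$, and recall that by definition $\sto(w,z)^2=|\Delta_w(z)|=|z-w|\,|z-\overline{w}|$. The two identities I would use are the expansion
\[
\Delta_w(\zeta)-\Delta_w(z)=(\zeta-z)\big(\zeta+z-2\re(w)\big),
\]
obtained from $\Delta_w(u)=u^2-2\re(w)u+|w|^2$, and the completion of the square
\[
\Delta_w(\zeta)=\big(\zeta-\re(w)\big)^2+|\im(w)|^2 .
\]

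For part $(\mr{i})$ I would first compute $|\stx_{w,n+1}(z)|$ according to the parity of $n+1$. If $n+1=2m$, then $\stx_{w,n+1}(z)=\Delta_w(z)^m$, so $|\stx_{w,n+1}(z)|=s^{n+1}=s^n\cdot s$; if $n+1=2m+1$, then $\stx_{w,n+1}(z)=\Delta_w(z)^m(z-w)$, so $|\stx_{w,n+1}(z)|=s^{2m}|z-w|=s^n|z-w|$. After dividing by $s^n$, the even case reduces to the trivial chain $\sqrt{s^2+\mu^2}-\mu\le s\le\sqrt{s^2+\mu^2}+\mu$, while the odd case is \emph{exactly} \eqref{eq:sto-inequality} with $|\Delta_w(z)|=s^2$. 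This establishes \eqref{eq:useful-1} and \eqref{eq:useful-2} simultaneously.

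For \eqref{eq:useful-4} I would apply the lower bound \eqref{eq:useful-2} to $\zeta$ and reduce to showing $\sqrt{t^2+\mu^2}-\mu\ge t^2/(t+2\mu)$; rationalizing the left-hand side as $t^2/(\sqrt{t^2+\mu^2}+\mu)$, this is equivalent to $\sqrt{t^2+\mu^2}\le t+\mu$, which is immediate. (The hypothesis $t>s$ plays no role here.) The one inequality requiring a genuine idea is \eqref{eq:useful-3}. Here I would take moduli in the first identity above and use the reverse triangle inequality together with $t>s$ to get
\[
|\zeta-z|\,\big|\zeta+z-2\re(w)\big|=|\Delta_w(\zeta)-\Delta_w(z)|\ge|\Delta_w(\zeta)|-|\Delta_w(z)|=t^2-s^2 .
\]
To bound the second factor, the completion-of-square identity gives $|\zeta-\re(w)|^2\le|\Delta_w(\zeta)|+\mu^2=t^2+\mu^2$, hence $|\zeta-\re(w)|\le\sqrt{t^2+\mu^2}\le t+\mu$, and likewise $|z-\re(w)|\le\sqrt{s^2+\mu^2}\le t+\mu$ since $s<t$. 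Therefore $|\zeta+z-2\re(w)|\le 2(t+\mu)\le 3t+2\mu$, and combining with $t+s\ge t-s$ yields
\[
|\zeta-z|\ge\frac{(t-s)(t+s)}{3t+2\mu}\ge\frac{(t-s)^2}{3t+2\mu},
\]
which is \eqref{eq:useful-3}.

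The main obstacle is only bookkeeping: securing the precise denominators $3t+2\mu$ and $t+2\mu$ forces the upper estimate of $|\zeta+z-2\re(w)|$ and the rationalization step to be carried out in the sharp form above. Both reductions rest entirely on the two displayed identities for $\Delta_w$ and on the elementary inequality $\sqrt{a^2+\mu^2}\le a+\mu$ for $a\ge0$; no deeper ingredient is required.
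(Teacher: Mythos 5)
Your proof is correct. Parts $(\mr{i})$ and the derivation of \eqref{eq:useful-4} coincide with the paper's argument essentially verbatim: the parity split reducing the odd case to \eqref{eq:sto-inequality} and the even case to $\sqrt{s^2+\mu^2}-\mu\le s\le\sqrt{s^2+\mu^2}+\mu$, followed by the rationalization $\sqrt{t^2+\mu^2}-\mu=t^2/(\sqrt{t^2+\mu^2}+\mu)\ge t^2/(t+2\mu)$. Where you genuinely diverge is \eqref{eq:useful-3}. The paper obtains it from the triangle inequality for the pseudo--metric $\sto$ (Lemma~\ref{lem:tau}), which gives $\sto(\zeta,z)\ge t-s$, combined with the bound $|\im(\zeta)|\le\sqrt{t^2+\mu^2}$ and a second application of \eqref{eq:sto-inequality} now centered at $\zeta$ rather than at $w$. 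You instead factor $\Delta_w(\zeta)-\Delta_w(z)=(\zeta-z)\big(\zeta+z-2\re(w)\big)$, get the lower bound $t^2-s^2$ from the reverse triangle inequality, and control the cofactor by completing the square in $\Delta_w$. Your route is self-contained (it never invokes the pseudo--metric property of $\sto$, whose proof in Lemma~\ref{lem:tau} is the least trivial ingredient the paper relies on here) and it actually produces the sharper denominator $2t+2\mu$, which you then relax to $3t+2\mu$ to match the statement; the paper's route buys brevity by reusing \eqref{eq:sto-inequality} and Lemma~\ref{lem:tau}, which are already in place for other purposes. Both arguments are complete and valid.
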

\begin{proof}
Let us prove $(\mr{i})$. Define $s:=\sto(w,z)$. Fix $n \in \N$. If $n=2m+1$ for some $m \in \N$, then $|\stx_{w,n+1}(z)|=s^{n+1}=s^ns$. 
On the other hand, we have that
\[
\sqrt{s^2+|\im(w)|^2}-|\im(w)| \leq s \leq \sqrt{s^2+|\im(w)|^2}+|\im(w)|
\]
and hence (\ref{eq:useful-1}) and (\ref{eq:useful-2}) hold. Suppose now that $n=2m$ for some $m \in \N$. Observe that $|\stx_{w,n+1}(z)|=s^n|z-w|$. In this way, (\ref{eq:useful-1}) and (\ref{eq:useful-2}) follow immediately from (\ref{eq:sto-inequality}).

It remains to show $(\mr{ii})$. Define $r:=\sto(w,\zeta)$. Since $\sto$ is a pseudo--metric on $\C$, we know that
\begin{equation} \label{eq:useful-5}
\sto(\zeta,z) \geq r-s.
\end{equation}
On the other hand, since $r^2=|w-\zeta||w-\overline\zeta|$, it is immediate to verify that
\begin{equation} \label{eq:useful-6}
|\im(\zeta)| \leq \sqrt{r^2+|\im(w)|^2}.
\end{equation}
By combining inequalities (\ref{eq:useful-5}), (\ref{eq:useful-6}) with the first inequality of (\ref{eq:sto-inequality}), we obtain (\ref{eq:useful-3}):
\begin{align*}
|\zeta-z| \geq&
\sqrt{\sto(\zeta,z)^2+|\im(\zeta)|^2}-|\im(\zeta)| \geq \sqrt{(r-s)^2+|\im(\zeta)|^2}-|\im(\zeta)|= \\
= &
\frac{(r-s)^2}{\sqrt{(r-s)^2+|\im(\zeta)|^2}+|\im(\zeta)|} \geq \frac{(r-s)^2}{r+2|\im(\zeta)|} \geq \frac{(r-s)^2}{3r+2|\im(w)|}\;.
\end{align*}
Finally, applying (\ref{eq:useful-2}) with $z=\zeta$, 
we obtain (\ref{eq:useful-4}), as desired:
\begin{align*}
|\stx_{w,n+1}(\zeta)|=&
r^n|w-\zeta| \geq r^n\big(\sqrt{r^2+|\im(w)|^2}-|\im(w)|\big) =\\
=&
r^n \cdot \frac{r^2}{\sqrt{r^2+|\im(w)|^2}+|\im(w)|} \geq r^{n+1} \cdot \frac{r}{r+2|\im(w)|}\;.
\end{align*}
\end{proof}

In the next result, we perform  on a holomorphic function the expansion procedure described in \cite{StoppatoAdvMath2012} (see Sections 2, 3 and 4 of \cite{StoppatoAdvMath2012}). Observe that we do not require that the $\sto_A$--ball $\Sto(w,r)$ with boundary the Cassini oval  where we expand is connected: the radius $r$ can also be smaller or equal to $|\im(w)|$. In this case, $\Sto(w,r)$ does not intersect the real axis.


\begin{lemma} \label{lem:sto-holomorphic}
Let $E$ be a non--empty bounded open subset of $\C$, let $g:E \lra \C$ be a holo\-morphic function, let $w \in E$ and let $r \in \R^+$ such that $\cl(\Sto(w,r)) \subset E$. Define $G:=\sup_{\partial \Sto(w,r)}|g|$. 
Then the following properties are satisfied.
\begin{itemize}
 \item[$(\mr{i})$] For each $n \in \N$, define $A_n \in \C$ by setting
 \begin{equation}\label{An}
A_n:=\frac{1}{2\pi\ui}\int_{\partial \Sto(w,r)}\frac{g(\zeta)}{\stx_{w,n+1}(\zeta)} \, d\zeta.
 \end{equation}
 Then $g$ expands as follows:
 \[
 g(z)=\sum_{n \in \N}\stx_{w,n}(z) \, A_n \; \mbox{ for each }z \in \Sto(w,r).
 \]
 \item[$(\mr{ii})$] For each $n \in \N$, it holds:
 \[
 \left|A_n\right| \leq \frac{\Theta G}{r^n}.
 \]
 \item[$(\mr{iii})$] For each $n \in \N$ and for each $z \in \Sto(w,r)$, it holds:
 \begin{equation} \label{eq:iii-1}
 g(z)-\sum_{k=0}^n\stx_{w,k}(z) \, A_k=\frac{1}{2\pi\ui} \, \stx_{w,n+1}(z)\int_{\partial \Sto(w,r)}\frac{g(\zeta)}{(\zeta-z)\stx_{w,n+1}(\zeta)} \, d\zeta
 \end{equation}
 and
 \[
 \left| \, g(z)-\sum_{k=0}^n\stx_{w,k}(z) \, A_k \right| \leq \Theta G \left(\frac{\sto(w,z)}{r}\right)^n  \frac{(3r+2|\im(w)|)(r+2|\im(w)|)^2}{r(r-\sto(w,z))^2}. 
 \]
\end{itemize}
\end{lemma}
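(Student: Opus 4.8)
The plan is to reduce everything to the scalar Cauchy integral formula on the region $\Sto(w,r)$ together with a single algebraic identity expanding the Cauchy kernel in terms of the spherical polynomials $\stx_{w,n}$. Writing $t:=w+\overline{w}$ and $d:=\Delta_w$, so that $d(z)=(z-w)(z-\overline{w})$, the starting point is the identity
\[
\frac{1}{\zeta-z}=\sum_{n\in\N}\frac{\stx_{w,n}(z)}{\stx_{w,n+1}(\zeta)},\qquad \sto(w,z)<\sto(w,\zeta),
\]
which I would verify by grouping the terms of index $2m$ and $2m+1$: each such pair equals $(d(z)/d(\zeta))^m\cdot(\zeta+z-t)/d(\zeta)$, so the series is geometric of ratio $\rho:=d(z)/d(\zeta)$ with $|\rho|=\sto(w,z)^2/\sto(w,\zeta)^2<1$, summing to $(\zeta+z-t)/(d(\zeta)-d(z))=1/(\zeta-z)$ after using $d(\zeta)-d(z)=(\zeta-z)(\zeta+z-t)$. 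The same bookkeeping yields the finite remainder identity
\[
\frac{1}{\zeta-z}-\sum_{k=0}^{n}\frac{\stx_{w,k}(z)}{\stx_{w,k+1}(\zeta)}=\frac{\stx_{w,n+1}(z)}{\stx_{w,n+1}(\zeta)}\cdot\frac{1}{\zeta-z},
\]
which I would check by summing the tail $\sum_{k>n}$ in the two cases $n+1$ even and $n+1$ odd, the odd case reducing to the elementary equality $(z-w)(\zeta-z)+d(z)=(z-w)(\zeta-\overline{w})$.

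For $(\mr{i})$ and $(\mr{ii})$, since $\cl(\Sto(w,r))\subset E$ and $g$ is holomorphic, Cauchy's formula gives $g(z)=(2\pi\ui)^{-1}\int_{\partial\Sto(w,r)}g(\zeta)(\zeta-z)^{-1}\,d\zeta$ for $z\in\Sto(w,r)$; this remains valid when $\Sto(w,r)$ is disconnected (two Cassini components for small $r$), because the integral over the component not containing $z$ vanishes by Cauchy's theorem. Substituting the infinite identity and interchanging sum and integral — justified by the uniform bound $|\rho|\le\sto(w,z)^2/r^2<1$ on $\partial\Sto(w,r)$, so the geometric series converges uniformly in $\zeta$ — produces exactly $g(z)=\sum_n\stx_{w,n}(z)A_n$ with $A_n$ as in (\ref{An}). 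For the coefficient bound I would estimate $A_n$ directly: inequality (\ref{eq:useful-2}) applied at $\zeta$ (where $\sto(w,\zeta)=r$) gives $|\stx_{w,n+1}(\zeta)|\ge r^n(\sqrt{r^2+|\im(w)|^2}-|\im(w)|)$, whence $|A_n|\le\frac{G}{r^n}\cdot\frac{1}{2\pi}\frac{\mr{length}(\partial\Sto(w,r))}{\sqrt{r^2+|\im(w)|^2}-|\im(w)|}\le\frac{\Theta G}{r^n}$ by the definition of $\Theta$ in Lemma~\ref{lem:T}.

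For $(\mr{iii})$, multiplying the finite identity by $g(\zeta)$ and integrating over $\partial\Sto(w,r)$ immediately yields the stated remainder formula (\ref{eq:iii-1}). To bound it I would combine the three estimates of Lemma~\ref{lem:S}: the numerator via (\ref{eq:useful-1}), $|\stx_{w,n+1}(z)|\le\sto(w,z)^n(\sqrt{\sto(w,z)^2+|\im(w)|^2}+|\im(w)|)$; the distance in the denominator via (\ref{eq:useful-3}), $|\zeta-z|\ge(r-\sto(w,z))^2/(3r+2|\im(w)|)$; and the spherical factor via (\ref{eq:useful-4}), $|\stx_{w,n+1}(\zeta)|\ge r^{n+2}/(r+2|\im(w)|)$; the length of $\partial\Sto(w,r)$ is controlled by $\Theta$ as above. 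Assembling these and writing $s:=\sto(w,z)$ and $a:=|\im(w)|$, the claimed bound follows from the elementary inequality $(\sqrt{s^2+a^2}+a)(\sqrt{r^2+a^2}-a)\le r^2\le r(r+2a)$, valid because $s<r$. The only genuinely delicate point in the whole argument is the justification of the termwise integration in $(\mr{i})$ together with the validity of Cauchy's formula on a possibly disconnected $\Sto(w,r)$; once the two explicit kernel identities are in hand, the remainder of the proof is estimation driven entirely by Lemmas~\ref{lem:T} and~\ref{lem:S}.
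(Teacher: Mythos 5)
Your proposal is correct and follows essentially the same route as the paper's proof: the expansion of the Cauchy kernel $1/(\zeta-z)$ in terms of the polynomials $\stx_{w,n}$ (which the paper obtains by alternately iterating the two one-step identities at $w$ and $\overline w$, and you obtain equivalently by summing the paired terms as a geometric series in $\Delta_w(z)/\Delta_w(\zeta)$), followed by Cauchy's formula on $\partial\Sto(w,r)$ and the estimates of Lemmas~\ref{lem:T} and~\ref{lem:S}. The only organizational difference is that you prove $(\mr{i})$ by termwise integration of the infinite kernel expansion (correctly justified by uniform convergence on the boundary), whereas the paper first secures convergence of $\sum_n\stx_{w,n}(z)A_n$ from $(\mr{ii})$ via Theorem~\ref{thm:radii} and then deduces $(\mr{i})$ from the remainder estimate in $(\mr{iii})$; both are sound.
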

\begin{proof}
Let $n \in \N$. Thanks to (\ref{eq:useful-2}) and Lemma \ref{lem:T}, we have:
\[
|A_n| \leq \frac{1}{2\pi}\int_{\partial\Sto(w,r)}\frac{|g(\zeta)|}{|\stx_{w,n+1}(\zeta)|} \, d|\zeta| \leq \frac{G}{r^n} \left(\frac{1}{2\pi} \cdot \frac{\mr{length}(\partial \Sto(w,r))}{\sqrt{r^2+|\im(w)|^2}-|\im(w)|}\right) \leq \frac{\Theta G}{r^n}.
\]
This proves $(\mr{ii})$. Moreover, it follows that $\limsup_{n \rightarrow +\infty}|A_n|^{1/n} \leq 1/r$. In this way, by applying point $(\mr{ii})$ of Theorem \ref{thm:radii} with $A=\C$ and $\| \cdot \|_A$ equal to the usual euclidean norm $|\cdot|$ of $\C$, we obtain that the series $\sum_{n \in \N}\stx_{w,n}(z) \, A_n$ converges on $\Sto(w,r)$.

Let $\zeta \in \partial \Sto(w,r)$ and let $z \in \Sto(w,r)$. Observe that it holds:
\begin{equation*} 
\frac{1}{\zeta-z}=\frac{1}{\zeta-w}+\frac{z-w}{\zeta-w} \cdot \frac{1}{\zeta-z}
\; \; \; \mbox{ and } \; \; \;
\frac{1}{\zeta-z}=\frac{1}{\zeta-\overline{w}}+\frac{z-\overline{w}}{\zeta-\overline{w}} \cdot \frac{1}{\zeta-z}.
\end{equation*}
By using alternatively the preceding two equalities, we obtain:
\begin{equation} \label{eq:cauchy-kernel}
\frac{1}{\zeta-z}=\sum_{k=0}^n\frac{\stx_{w,k}(z)}{\stx_{w,k+1}(\zeta)}+\frac{\stx_{w,n+1}(z)}{\stx_{w,n+1}(\zeta)} \cdot \frac{1}{\zeta-z}
\; \; \mbox{ for each }n \in \N.
\end{equation}
By combining the classical Cauchy formula with (\ref{eq:cauchy-kernel}), we infer that
\begin{align*}
g(z)= & \, \frac{1}{2\pi\ui}\int_{\partial\Sto(w,r)}\frac{g(\zeta)}{\zeta-z} \, d\zeta=\\
=& \,
\sum_{k=0}^n\stx_{w,k}(z) \, A_k+\frac{1}{2\pi\ui} \, \stx_{w,n+1}(z)\int_{\partial \Sto(w,r)}\frac{g(\zeta)}{(\zeta-z)\stx_{w,n+1}(\zeta)} \, d\zeta,
\end{align*}
which is equivalent to (\ref{eq:iii-1}). On the other hand, Lemmas~\ref{lem:T} and \ref{lem:S} 
imply that
\begin{align*}
&\left| \, g(z)-\sum_{k=0}^n\stx_{w,k}(z) \, A_k \right| \leq 
\frac{1}{2\pi} \, |\stx_{w,n+1}(z)|\int_{\partial \Sto(w,r)}\frac{|g(\zeta)|}{|\zeta-z|  |\stx_{w,n+1}(\zeta)|} \, d|\zeta| \leq\\
& \leq
\frac{\sto(w,z)^n\big(\sto(w,z)+2|\im(w)|\big)}{2\pi} \cdot G \cdot \frac{3r+2|\im(w)|}{(r-\sto(w,z))^2} \cdot \frac{r+2|\im(w)|}{r^{n+2}} \cdot \mr{length}(\partial\Sto(w,r))\leq\\
&
\leq G\; \frac{\sto(w,z)^n}{r^n} \cdot \frac{(3r+2|\im(w)|)(r+2|\im(w)|)^2}{r(r-\sto(w,z))^2} \cdot 
\left(\frac{\mr{length}(\partial \Sto(w,r))}{2\pi r}\right) \leq \\
&
\leq G\; \frac{\sto(w,z)^n}{r^n}  \cdot \frac{(3r+2|\im(w)|)(r+2|\im(w)|)^2}{r(r-\sto(w,z))^2} \cdot \left( \frac{\mr{length}(\partial \Sto(w,r))}{2\pi\sqrt{r^2+|\im(w)|^2}-|\im(w)|}\right) \leq\\
&
\leq \Theta G\; \frac{\sto(w,z)^n}{r^n}\;  \frac{(3r+2|\im(w)|)(r+2|\im(w)|)^2}{r(r-\sto(w,z))^2}\;.
\end{align*}
This proves point $(\mr{iii})$ and ensures that the series $\sum_{n \in \N}\stx_{w,n}(z) \, A_n$ converges to $g(z)$ for each $z \in \Sto(w,r)$, because $\sto(w,z)<r$. Point $(\mr{i})$ is proved and the proof is complete.
\end{proof}


\begin{theorem} \label{thm:spherical-expansion} 
Let $f \in \mc{SR}(\OO_D,A)$, let $y \in \OO_D$, let $J \in \cS_A$ such that $y \in \C_J$ and let $r \in \R^+$ such that $\cl(\Sto_A(y,r)) \subset \OO_D$. Define $\Sto_J(y,r):=\Sto_A(y,r) \cap \C_J$ and $S_f:=\sup_{\partial \Sto_J(y,r)}\|f\|_A$. 
Then the following assertions hold.
\begin{itemize}
 \item[$(\mr{i})$] There exists a unique sequence $\{s_n\}_{n \in \N}$ in $A$ such that $f$ expands as follows:
 \[
 f(x)=\sum_{n \in \N}\stx_{y,n}(x)s_n \; \mbox{ for each }x \in 
 \Sto_A(y,r).
 \]
 We call $s_n$ the \emph{$\mr{n}^{\mr{th}}$-spherical number of $f$ at $y$}.
 \item[$(\mr{ii})$] For each $n \in \N$, it holds:
 \begin{equation}\label{sn}
 s_n=(2\pi J)^{-1}\int_{\partial \Sto_J(y,r)}(\stx_{y,n+1}(\zeta))^{-1} \, d\zeta \, f(\zeta).
 \end{equation}
 Furthermore, there exists a positive real constant $C$, depending only on $\mk{a}$ and on $\| \cdot \|_A$, such that
  \begin{equation}\label{estimate_sn}
 \|s_n\|_A \leq \frac{C S_f}{r^n}  \; \mbox{ for each }n \in \N.
  \end{equation}
 \item[$(\mr{iii})$] Let $n$ be an arbitrary integer in $\N$. It holds:
 \begin{equation}\label{equalityiii}
 f(x)-\sum_{k=0}^n\stx_{y,k}(x)s_k=(2\pi)^{-1} \, \left(\stx_{y,n+1} \punto \mk{R}_{y,n}(f)\right)(x)
 \end{equation}
 for each $x \in \Sto_A(y,r)$, where $\mk{R}_{y,n}(f):\Sto_A(y,r) \lra A$ is the slice regular function induced by the holomorphic stem function
 \[
 \Sto(w,r) \ni z \longmapsto \int_{\partial \Sto_J(y,r)} \Delta_{\zeta}(z)^{-1}(\zeta^c-z)J^{-1}(\stx_{y,n+1}(\zeta))^{-1} \, d\zeta \, f(\zeta),
 \]
where $w=\xi+\ui\eta \in D$ if $y=\xi+J\eta$.  
 Furthermore, if $A$ is associative, then we have
 \[
 \mk{R}_{y,n}(f)(x)=\int_{\partial \Sto_J(y,r)} \Delta_{\zeta}(x)^{-1}(\zeta^c-x)J^{-1}(\stx_{y,n+1}(\zeta))^{-1} \, d\zeta \, f(\zeta)
 \]
 for each $x \in \Sto_A(y,r)$.
 \item[$(\mr{iii^{\pr}})$] There exists a positive real constant $C^{\pr}$, depending only on $\mk{a}$ and on $\| \cdot \|_A$, such that
 \[
 \left\|  f(x)-\sum_{k=0}^n\stx_{y,k}(x)s_k\right\|_A \leq C^{\pr} \, S_f \frac{\sto_A(x,y)^n}{r^n}\, \frac{(3r+2|\im(w)|)(r+2|\im(w)|)^2}{r(r-\sto_A(x,y))^2}
 \]
 for each $n \in \N$ and for each $x \in \Sto_A(y,r)$.
\end{itemize}
\end{theorem}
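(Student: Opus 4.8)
The plan is to reduce the whole statement to the holomorphic spherical expansion of Lemma~\ref{lem:sto-holomorphic}, exactly as Theorem~\ref{thm:power-expansion} was reduced to classical Taylor theory. First I would invoke Lemma~\ref{lem:universal-norm} to fix a splitting base $\mscr{B}=(1,J,J_1,JJ_1,\ldots,J_h,JJ_h)$ with $\|J_\ell\|_A=1$ and $\|x\|_{\mscr{B}}\le\mr{H}\|x\|_A$, and write $f_J=\sum_{\ell=0}^h\widetilde f_\ell J_\ell$ with $\widetilde f_\ell=\Phi_J\circ\hat f_\ell$, where each $\hat f_\ell:D\lra\C$ is holomorphic by Lemma~\ref{lem:splitting}. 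Since $\Sto_J(y,r)=\Phi_J(\Sto(w,r))$ (with $w=\xi+\ui\eta$) and the $\mscr{B}$-coordinates of $f$ on $\C_J$ are the real and imaginary parts of the $\hat f_\ell$, the computation behind~\eqref{eq:3} gives $\sum_\ell\sup_{\partial\Sto(w,r)}|\hat f_\ell|\le(h+1)\,\mr{H}\,S_f$. This single estimate is what transfers all the complex bounds of Lemma~\ref{lem:sto-holomorphic} to $A$.

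For (i)--(ii) I would apply Lemma~\ref{lem:sto-holomorphic} to each $\hat f_\ell$, obtaining $\hat f_\ell=\sum_n\stx_{w,n}A_{n,\ell}$ with $A_{n,\ell}$ given by~\eqref{An} and $|A_{n,\ell}|\le\Theta(\sup|\hat f_\ell|)r^{-n}$. Using the identity $\stx_{y,n}(\Phi_J(z))=\Phi_J(\stx_{w,n}(z))$ from~\eqref{eq:phi_j}, the multiplicativity of $\Phi_J$ on $\C$, and Artin's theorem to associate the $\C_J$-factors with $J_\ell$, the series reassembles as $f(x)=\sum_n\stx_{y,n}(x)s_n$ on $\Sto_J(y,r)$ with $s_n:=\sum_\ell\Phi_J(A_{n,\ell})J_\ell$. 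The estimate $\|s_n\|_A\le C_A\sum_\ell|A_{n,\ell}|\le C_A\Theta(h+1)\mr{H}\,S_f\,r^{-n}$ yields~\eqref{estimate_sn} and forces $\limsup_n\|s_n\|_A^{1/n}\le 1/r$, so by Theorem~\ref{thm:radii}(ii) the series is slice regular on all of $\Sto_A(y,r)$; as it agrees with $f$ on $\C_J$, the representation formula~\cite[Prop.~6]{GhPe_AIM} extends the equality to $\Sto_A(y,r)$. Converting the complex integral~\eqref{An} through $\Phi_J$ (replacing $\ui\mapsto J$, $(2\pi\ui)^{-1}\mapsto(2\pi J)^{-1}$, $d\zeta'\mapsto d\zeta$) and summing over $\ell$ gives~\eqref{sn}. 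Uniqueness follows because the $\C_J$-restriction recovers each $\hat f_\ell$, whose spherical coefficients $A_{n,\ell}$ are uniquely determined by Lemma~\ref{lem:second} (equivalently, by Theorem~\ref{thm:coefficients}); hence the $s_n$ are determined by $f$.

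For (iii) I would set $G(z):=\int_{\partial\Sto_J(y,r)}\Delta_\zeta(z)^{-1}(\zeta^c-z)J^{-1}(\stx_{y,n+1}(\zeta))^{-1}\,d\zeta\,f(\zeta)$; since $z\mapsto\Delta_\zeta(z)^{-1}(\zeta^c-z)$ is a holomorphic stem function (\cite[Sect.~5]{GhPe_AIM}) and the remaining factors are constant in $z$, $G$ is a holomorphic stem function and $\mk{R}_{y,n}(f)=\I(G)$ is slice regular. Because the stem function $\stx_{w,n+1}$ is $\C$-valued, its components are real and the slice product reduces to the pointwise product, so it suffices to check the identity on $\C_J$. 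Writing $\Delta_\zeta(z)^{-1}(\zeta^c-z)=F_1+\ui F_2$, I would feed the holomorphic remainder~\eqref{eq:iii-1} into the same reassembly to obtain $g(x)=\stx_{y,n+1}(x)(2\pi J)^{-1}\int_{\partial\Sto_J(y,r)}(\zeta-x)^{-1}(\stx_{y,n+1}(\zeta))^{-1}\,d\zeta\,f(\zeta)$, and compare it with $(2\pi)^{-1}\stx_{y,n+1}(x)\mk{R}_{y,n}(f)(x)$, using that $\I(G)$ on $\C_J$ equals $\int(F_1+JF_2)J^{-1}(\stx_{y,n+1}(\zeta))^{-1}d\zeta\,f(\zeta)$ together with the identity $\Delta_\zeta(x)^{-1}(\zeta^c-x)=(\zeta-x)^{-1}$, valid for $x,\zeta\in\C_J$. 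The representation formula then extends the equality to $\Sto_A(y,r)$, and when $A$ is associative the same $\ui\mapsto I$ substitution on a general slice $\C_I$ produces the closed integral formula for $\mk{R}_{y,n}(f)$.

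Finally, for (iii$'$) I would apply the estimate of Lemma~\ref{lem:sto-holomorphic}(iii) to each $\hat g_\ell:=\hat f_\ell-\sum_{k\le n}\stx_{w,k}A_{k,\ell}$, sum against $\sum_\ell\sup|\hat f_\ell|\le(h+1)\mr{H}S_f$, and obtain the bound for $\|g(x)\|_A$ on $\C_J$. For a general $x=\alpha+I\beta$ the representation formula gives $\|g(x)\|_A\le\tfrac{1+C_A^2}{2}\big(\|g(z_J)\|_A+\|g(z_J^c)\|_A\big)$ with $z_J=\alpha+J\beta$, $z_J^c=\alpha-J\beta\in\C_J$; the key point here is that $\Sto_J(y,r)=\Sto_J(y^c,r)$ (because $\Sto(w,r)=\Sto(\overline w,r)$), so both slice values are controlled by the single supremum $S_f$ and by $\sto_A(x,y)=\sto(z,w)=\sto(\overline z,w)$, yielding the constant $C^{\pr}=(1+C_A^2)(h+1)\mr{H}\,\Theta$. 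The main obstacle throughout is bookkeeping rather than analysis: one must check that every rearrangement of products of $\C_J$-elements with the generally non-associative factors $J_\ell$ and $f(\zeta)$ is legitimated by Artin's theorem, and that the passage from complex contour integrals to $\C_J$-integrals through $\Phi_J$ is carried out consistently; the genuinely analytic content is already packaged in Lemmas~\ref{lem:T}, \ref{lem:S} and~\ref{lem:sto-holomorphic}.
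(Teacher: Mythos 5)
Your proposal is correct and follows essentially the same route as the paper: split $f_J=\sum_\ell(\Phi_J\circ\hat f_\ell)J_\ell$ via Lemmas~\ref{lem:universal-norm} and~\ref{lem:splitting}, apply Lemma~\ref{lem:sto-holomorphic} to each holomorphic component, reassemble through $\Phi_J$, the identity \eqref{eq:phi_j} and Artin's theorem to get $s_n=\sum_\ell\Phi_J(A_{n,\ell})J_\ell$ with the stated integral formula and bound, and extend from $\C_J$ to $\Sto_A(y,r)$ by the representation formula, including the remainder identity and estimate. The only (harmless) deviations are cosmetic: you route uniqueness through Lemma~\ref{lem:second} where the paper deduces it directly from the coefficient estimate, and you explicitly cite Theorem~\ref{thm:radii} for slice regularity of the sum.
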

\begin{proof}
We can proceed as in the proof of  Theorem~\ref{thm:power-expansion}. Applying Lemma~\ref{lem:sto-holomorphic} to each holomorphic $\hat f_\ell$ of decomposition \eqref{eq:easy}, we get the spherical expansion at each $x\in \Sto_J(y,r)$:
\[
f(x)=f_J(z)=\sum_{\ell=0}^h\Phi_J\left(\sum_{n\in\N}\stx_{w,n}(z)A_{n,\ell}\right)J_\ell=\sum_{n\in\N}\stx_{y,n}(x)s_n,
\]
where $A_{n,\ell}$ are the complex numbers given by formula \eqref{An} and $s_n=\sum_\ell \Phi_J(A_{n,\ell})J_\ell\in A$.
From $\sum_\ell(\Phi_J\circ\hat f_\ell)J_\ell=f_J$, we get formula \eqref{sn} for $s_n$. Estimate \eqref{estimate_sn} follows from Lemma~\ref{lem:sto-holomorphic}(ii), with constant $C=C_A(h+1)\Theta$. The same estimate gives also the uniqueness of the expansion coefficients.

Let us prove (iii). From equation \eqref{eq:iii-1} of Lemma~\ref{lem:sto-holomorphic} applied to each component $\hat f_\ell$, we get that if $x \in \Sto_J(y,r)$, then
 \[
 f(x)-\sum_{k=0}^n\stx_{y,k}(x)s_k=(2\pi J)^{-1} \, \stx_{y,n+1}(x)\int_{\partial \Sto_J(y,r)} (\zeta-x)^{-1}(\stx_{y,n+1}(\zeta))^{-1} d\zeta \, f(\zeta).
 \] 
On the other hand, for each $x \in \Sto_J(y,r)$, it holds
\begin{align*}
(2&\pi)^{-1} \, \left(\stx_{y,n+1} \punto \mk{R}_{y,n}(f)\right)(x)=\\
&=(2\pi)^{-1} \, \stx_{y,n+1}(x) \int_{\partial \Sto_J(y,r)} \Delta_{\zeta}(z)^{-1}(\zeta^c-z)J^{-1}(\stx_{y,n+1}(\zeta))^{-1} \, d\zeta \, f(\zeta)=\\
&=(2\pi J)^{-1} \, \stx_{y,n+1}(x) \int_{\partial \Sto_J(y,r)}(\zeta-z)(\stx_{y,n+1}(\zeta))^{-1} \, d\zeta \, f(\zeta).
\end{align*}
The representation formula for slice functions implies that \eqref{equalityiii} holds on the whole circular domain $\Sto(y,r)$.
The estimate $(\mr{iii^{\pr}})$ of the remainder follows in a similar way from the estimate of Lemma~\ref{lem:sto-holomorphic} and the representation formula again.
\end{proof}

\begin{remark}
In view of parts (i) and (ii) of Theorem~\ref{thm:spherical-expansion}, the spherical numbers of $f$ at $y$ can be expressed as solutions of the infinite linear system introduced in Theorem~\ref{thm:coefficients}. In particular, the spherical number $s_1$ is equal to the spherical derivative $\partial_s f(y)$ defined in \cite[Def.~6]{GhPe_AIM}.
\end{remark}

\begin{example}
Let $A=\q$ and let $J\in\cS_\q$ be fixed. Consider the slice regular function $f$ on $\q\setminus\R$ defined, for each $x=\alpha+\beta I_x$, with $\beta>0$, by
\[f(x)=1-I_xJ.\]
The function $f$ is induced by the locally constant stem function taking values $2$ on $\C^+$ and $0$ on $\C^-$. We compute the power and spherical expansions of $f$ at $y=J$. Since the Cullen (or slice) derivatives $\partial_C^nf(y)$ vanish  for each $n>0$, the power expansion $\mr{P}(x)$ reduces to the first term: $\mr{P}(x)=2$. In this case, the maximal $\sigma_\q$--ball centered at $J$ on which the power expansion converges to $f$ is the domain $\Sigma_\q(J,1)=\{q\in\C_J\;|\;|q-J|<1\}$, which has empty interior w.r.t.\ the eucledean topology of $\q$. On the other hand, the spherical expansion converges to $f$ on a non-empty open domain of $\q$. By solving the system \eqref{eq:system}, one obtains the spherical numbers of $f$ at $J$:
\[
s_0=2,\quad s_n=\begin{cases}
2(-1)^k\binom{2k}k(2J)^{-n}=4^{-k}\binom{2k}k(-J)&\text{\quad if $n=2k+1$ is odd,}\\
(-1)^k\binom {2k}k(2J)^{-n}=4^{-k}\binom{2k}k&\text{\quad if $n=2k>0$ is even.}\end{cases}
\]
Therefore the spherical expansion of $f$ at $J$ has the following form:
\begin{align*}
\mr{S}(x)&=2+(x-J)(-J)+\Delta_J(x)\frac12+\Delta_J(x)(x-J)\left(-\frac J2\right)+\\
&\ \qquad+(\Delta_J(x))^2\frac38+(\Delta_J(x))^2(x-J)\left(-\frac38J\right)+\cdots=\\
&=1-\sum_{k=0}^{+\infty} \frac1{4^k}\binom{2k}k (1+x^2)^kxJ.
\end{align*}
As a consequence of Theorem~\ref{thm:spherical-expansion} applied on the $\sto_\q$--balls $\Sto_\q(J,r)$ of radius $r<1$, this series converges uniformly to $f$ on the compact subsets of the $\sto_\q$--ball $\Sto_\q(J,1)$.

\end{example}

\begin{definition}
Given a function $f:V \lra A$ from a non--empty circular open subset $V$ of $\Q_A$ into $A$, we say that $f$ is \emph{$\sto_A$--analytic} or \emph{spherically analytic},
 if, for each $y \in V$, there exists a non--empty $\sto_A$--ball $\Sto$ centered at $y$ and contained in $V$, and a spherical series $\sum_{n \in \N}\stx_{y,n}(x)s_n$ with coefficients in $A$, which converges to $f(x)$ for each $x \in \Sto \cap V$.
\end{definition}

\begin{theorem}
Let $\OO_D$ be connected and let $f:\OO_D \lra A$ be any function. The following assertions hold.
\begin{itemize}
 \item[$(\mr{i})$] If $D \cap \R=\emptyset$, then $f$ is a slice regular function if and only if $f$ is slice and spherically analytic.
 \item[$(\mr{ii})$] If $D \cap \R \neq \emptyset$, then $f$ is a slice regular function if and only if $f$ is spherically analytic.
\end{itemize}
\end{theorem}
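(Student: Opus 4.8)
The plan is to mirror the proof of Theorem~\ref{poweranalytic}, with the spherical expansion of Theorem~\ref{thm:spherical-expansion} playing the role there played by the power expansion of Theorem~\ref{thm:power-expansion}. For the \emph{only if} part in both (i) and (ii), I would start from $f\in\mc{SR}(\OO_D,A)$ and, given $y\in\OO_D$, choose $r>0$ so small that $\cl(\Sto_A(y,r))\subset\OO_D$; this is possible because $\OO_D$ is open and circular, hence contains a euclidean neighbourhood of the sphere $\cS_y$ through $y$, while $\Sto_A(y,r)$ shrinks to $\cS_y$ as $r\to 0$. Theorem~\ref{thm:spherical-expansion}(i) then furnishes a spherical series converging to $f$ on the $\sto_A$--ball $\Sto_A(y,r)\subseteq\OO_D$, so $f$ is spherically analytic; since $f$ is slice by definition, this settles the forward implication in both cases.

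For the converse I would first prove that, \emph{assuming $f$ is slice}, spherical analyticity forces $\partial f/\partial x^c=0$. Fix $y=\xi+J\eta\in\C_J\cap\OO_D$, put $w:=\xi+\ui\eta$, and let $f(x)=\sum_{n\in\N}\stx_{y,n}(x)s_n$ on a $\sto_A$--ball about $y$. Restricting to $\C_J$ and using $\stx_{y,n}(\Phi_J(z))=\Phi_J(\stx_{w,n}(z))$ from \eqref{eq:phi_j}, then expanding each $s_n$ in a splitting base associated with $J$ exactly as in \eqref{eq:s_n} and in the proof of Theorem~\ref{thm:coefficients}, I would rewrite $f_J=\sum_{k=0}^h(\Phi_J\circ\hat S_k)J_k$, where each $\hat S_k=\sum_{n\in\N}\stx_{w,n}\hat s_{n,k}$ is a \emph{complex} spherical series. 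By Theorem~\ref{thm:radii}(ii) with $A=\C$, each $\hat S_k$ is holomorphic on the Cassini disk $\Sto(w,r)$, so $f_J$ is real analytic and each summand $(\Phi_J\circ\hat S_k)J_k$ is annihilated by $\partial_{\alpha}+J\partial_{\beta}$ (the Cauchy--Riemann equations for $\hat S_k$, right multiplication by the constant $J_k$ commuting with the operator). Hence $(\partial_{\alpha}+J\partial_{\beta})f_J=0$, which by Lemma~\ref{lem:cullen} is precisely $\partial f/\partial x^c=0$ at $y$; real analyticity of the slice restrictions also places $f$ in $\mc{S}^1(\OO_D,A)$, so letting $y$ vary gives $f\in\mc{SR}(\OO_D,A)$ and completes (i).

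For (ii) I would deduce sliceness from analyticity as in Theorem~\ref{poweranalytic}. At a real point $y\in\OO_D\cap\R$ one has $y^c=y$, so $\Delta_y(x)=(x-y)^2$, $\stx_{y,n}(x)=(x-y)^n$, and moreover $\sto_A(\cdot,y)=\|\cdot-y\|_A$ there; thus the spherical series degenerates to an ordinary power series $\sum_{n\in\N}(x-y)^n s_n$ converging on a euclidean ball $B$, which is slice on $B$ (Examples 2(1) of \cite{GhPe_AIM}). To propagate sliceness, I would fix $I,J\in\cS_A$ and form $\tilde f_I(z):=f_I(z)-\tfrac12(f_J(z)+f_J(\overline z))+\tfrac I2\big(J(f_J(z)-f_J(\overline z))\big)$; it vanishes on the $\C_J$--trace of $B$, and since every $f_K$ is real analytic on $D$ by the expansion argument of the previous paragraph applied at each (real or non-real) point, $\tilde f_I\in\mscr{C}^\omega(D,A)$ vanishes on the whole connected set $D$. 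Lemma~3.2 of \cite{Gh_Pe_GlobDiff} then gives that $f$ is slice, and the previous step upgrades this to slice regularity.

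The step I expect to demand the most care is the passage, in the converse, from the bare spherical expansion to the real analyticity and $\C_J$--holomorphy of the restriction $f_J$: one must legitimately rearrange the series into the holomorphic complex components $\hat S_k$, invoking Artin's theorem and the locally uniform convergence of Theorem~\ref{thm:radii}(ii), and thereby secure the $\mc{S}^1$ regularity required by the definition of slice regularity, rather than merely the pointwise identity $\partial f/\partial x^c=0$.
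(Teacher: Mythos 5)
Your proposal is correct and follows essentially the same route as the paper: the forward direction via Theorem~\ref{thm:spherical-expansion}, the converse by restricting the spherical expansion to a slice $\C_J$, splitting into holomorphic complex spherical series $\hat S_k$ so that Lemma~\ref{lem:cullen} yields $\partial f/\partial x^c=0$, and for (ii) the degeneration to an ordinary power series at real points followed by the $\tilde f_I$ real-analytic-continuation argument and Lemma~3.2 of \cite{Gh_Pe_GlobDiff}. You merely spell out details (the rearrangement into the $\hat S_k$ and the choice of $r$ with $\cl(\Sto_A(y,r))\subset\OO_D$) that the paper leaves implicit by citing the representation and smoothness results and the proof of Theorem~\ref{poweranalytic}.
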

\begin{proof}
From Theorem~\ref{thm:spherical-expansion}, if $f\in\mc{SR}(\OO_D,A)$, then $f$ is spherically analytic. Conversely, if $f$ is spherically analytic and $f(x)=\sum_n\stx_{y,n}(x)s_n$ on a $\sto_A$--ball $\Sto$ centered at $y\in\C_J$, then $f_J(z)=f(\Phi_J(z))=\Phi_J(\sum_n\stx_{y,n}(z)s_n)$ for $x=\Phi_J(z)$, $y=\Phi_J(w)$,   $z$ and $w$ in an open subset of $D$. If $f$ is a slice function, the representation formula and the smoothness result \cite[Propositions~6 and 7]{GhPe_AIM}   imply that 
$f$ is real analytic.
Moreover, from Lemma~\ref{lem:cullen} we get that $\partial f/\partial{x^c}=0$ at $y$. By the arbitrariness of the choice of $y$,  $\partial f/\partial{x^c}=0$ on the whole domain $\OO_D$.  

It remains to prove that if $D\cap\R\ne\emptyset$, the sliceness of $f$ is a consequence of its spherical analyticity. If $y\in\OO_D\cap\R$, then $\stx_{y,n}(x)=(x-y)^n$ and $f$ expands as $f(x)=\sum_n(x-y)^ns_n$ on an euclidean  ball $B$.  Now we can conclude as in the last part of the proof of Theorem~\ref{poweranalytic}.

\end{proof}



\end{document}